\newtheorem{theorem}{Theorem}[section]
\newtheorem{lemma}{Lemma}[section]
\newtheorem{corollary}[theorem]{Corollary}
\theoremstyle{remark}
\newtheorem{remark}[theorem]{Remark}
\newtheorem{definition}[theorem]{Definition}
\newcommand{\pbp}{\mathcal{\partial \overline{\partial}}}
\newcommand{\spbp}{\sqrt{-1}\partial \overline{\partial}}
\newcommand{\ddc}{\frac{\sqrt{-1}}{2\pi}\mathcal{\partial \overline{\partial}}}
\numberwithin{equation}{section}
\begin{document}

\title[Metric properties of parabolic ample bundles]{Metric properties of 
parabolic ample bundles}

\author[I. Biswas]{Indranil Biswas}
\address{School of Mathematics,
Tata Institute of fundamental research, Homi Bhabha road, Mumbai 400005, India,
and Mathematics Department, EISTI-University Paris-Seine, Avenue du parc, 95000,
Cergy-Pontoise, France}
\email{indranil@math.tifr.res.in}
\author{Vamsi Pritham Pingali}
\address{Department of Mathematics, Indian Institute of Science, Bangalore 560012, India}
\email{vamsipingali@iisc.ac.in}

\subjclass[2010]{14J60, 53C05, 53C07}

\keywords{Parabolic ample bundles; Griffiths' conjecture; singular Hermitian metric; 
Chern--Weil theory.}

\begin{abstract}
We introduce a notion of admissible Hermitian metrics on parabolic bundles and define positivity properties for
the same. We develop Chern--Weil theory for parabolic bundles and prove that our metric notions coincide with the 
already existing algebro--geometric versions of parabolic
Chern classes. We also formulate a Griffiths conjecture in the parabolic setting 
and prove some results that provide evidence in its favour for certain kinds of parabolic bundles. For these kinds of parabolic structures, we prove that the conjecture holds on Riemann 
surfaces. We also prove that a Berndtsson--type result holds, and that there are metrics on
stable bundles over surfaces whose Schur forms are positive.
\end{abstract}

\maketitle

\section{Introduction}\label{Introsec}

Given a Hermitian holomorphic vector bundle $(E,H)$ on a complex manifold $X$, it is said to 
be Griffiths (respectively, Nakano) positive if the curvature $\Theta_H$ is a positive 
bilinear form when tested against $v\otimes s$ where $v$ is a tangent vector and $s$ a 
vector from $E$ (respectively, when tested against all vectors in $TX \otimes E$). Another 
notion of positivity is Hartshorne ampleness --- a holomorphic vector bundle $E$ is 
Hartshorne ample if the tautological line bundle $\mathcal{O}_{\mathbb{P}(E)}(1)$ over 
$\mathbb{P}(E)$ is ample in the usual sense. It is clear that a Griffiths positive bundle is 
Hartshorne ample. The converse is a well known conjecture of Griffiths.\\

The evidence available in favour of Griffiths' conjecture is as follows :
\begin{enumerate}
\item Mori, \cite{Mori}, proved Hartshorne's conjecture \cite{Hartshorne}. This means that a
compact complex manifold $M$ whose tangent bundle $TM$ is Hartshorne ample is biholomorphic
to $\mathbb{CP}^n$. Since the Fubini--study metric on $\mathbb{CP}^n$ has positive bisectional
curvature, the vector bundle $TM$ is Griffiths positive.

\item Umemura, \cite{Umemura}, and later, Campana and Flenner, \cite{Camp}, proved the Griffiths 
conjecture for Riemann surfaces.

\item Bloch and Gieseker, \cite{BG}, proved that the Schur polynomials of Hartshorne ample 
bundles are numerically positive. Griffiths himself proved that $c_1$ and $c_2$ of a 
Griffiths positive metric are positive as forms. Guler, \cite{Guler}, and Diverio, 
\cite{Diverio}, proved, using a complicated calculation based on an elegant idea of Guler,
that the signed Segre forms of Griffiths positive bundles are positive (in particular, on 
surfaces the Schur polynomials of a Griffiths positive metric are positive pointwise). In 
\cite{Pinchern} it was proven that a Hartshorne ample semistable bundle over a surface admits 
a metric whose Schur polynomials are positive pointwise. It is still unknown as to whether 
Schur polynomials of Griffiths positive metrics are pointwise positive, however if so, this would 
be further indirect evidence.

\item Demailly, \cite{Dem}, proved that if $E$ is Griffiths positive then $E\otimes \det(E)$ is 
Nakano positive. Berndtsson \cite{Bo} proved that if $E$ is Hartshorne ample, then $E\otimes 
\det(E)$ is Nakano positive. Mourougane--Takayama \cite{MT} independently proved that 
$E\otimes \det(E)$ is Griffiths positive if $E$ is Hartshorne ample.

\item Typically, ``good'' metrics are produced using flows. Naumann, \cite{Naumann}, outlined a 
promising approach to the Griffiths conjecture using the relative K\"ahler--Ricci flow. If it 
works, it ought to work just as well in the equivariant context (which is roughly what this 
paper deals with).
\end{enumerate}

It is but natural to wonder if the same kind of a conjecture can be made for 
singular Hermitian metrics. Unfortunately, the notion of a singular Hermitian 
metric on general vector bundles (as opposed to line bundles where a lot of work 
has been done) is quite subtle and only recently has there been progress on it 
\cite{Decataldo, BernPaun, Pauntakayama, Raufi, Raufinakano, Bernd, RaufiChern, 
Hosono}. A compromise can be made by choosing to work with parabolic bundles, 
which are essentially vector bundles equipped with flags (and weights) over 
divisors. Any reasonable notion of a ``metric" on a parabolic bundle should 
degenerate on the divisor, i.e., it should be a singular Hermitian metric. The 
differential geometry of parabolic bundles has been studied reasonably well \cite{Si},
\cite{Biq}, \cite{Po}, \cite{JayLu}. The notion of parabolic Hartshorne ampleness has also been studied \cite{Bi, BL, BN}. However, to our knowledge, the metric aspects of parabolic ampleness have not received any attention so far. This paper attempts to remedy that situation.\\

In this paper we prove the following results.
\begin{enumerate}
\item In Section \ref{Admissible} we introduce a notion of admissible Hermitian metrics on parabolic bundles with rational weights over projective manifolds. It is interesting to compare our definition of admissibility with existing ones. We plan on addressing this in future work.

\item We define a metric notion of Griffiths (and Nakano) positivity for parabolic bundles in 
Section \ref{positivity-and-ampleness} and formulate a Griffiths conjecture in this context. 
We prove it for Riemann surfaces (for certain kinds of parabolic structures induced from 
``good Kawamata covers''). Moreover we prove that our notion of positivity agrees with the 
algebro--geometric notion in \cite{Bi} for line bundles.

\item In Section \ref{ChernWeil} we develop Chern--Weil theory for admissible metrics on parabolic bundles. We verify 
that the Chern classes coincide with the ones defined algebraically in \cite{Bi3, IS, BD}. We prove that the push--forward 
of $c_1^k(\mathcal{O}_{\mathbb{P}(E_*)}(1))$ gives (signed) Segre forms of $E$. This is a parabolic version of some 
results in \cite{Guler, Diverio}. Our proof has a small technical innovation in terms of generating functions and we 
hope it generalizes to computing push--forwards for flag bundles. Lastly, we prove a parabolic version of a result (for parabolic structures arising from good Kawamata covers) in 
\cite{Pinchern} concerning the existence of metrics whose Schur forms are positive on stable bundles over surfaces.

\item In Section \ref{Directimages} we prove a parabolic analogue of Berndtsson's theorem (as above, for parabolic structures arising from good Kawamata covers), i.e., if $E$ is Hartshorne ample, $E \otimes \det(E)$ is Nakano positive.
\end{enumerate}

\section{Preliminaries}\label{Prelim}

\subsection{Definition of parabolic vector bundles}

Let $X$ be an irreducible smooth complex projective variety and $D\, \subset\, X$ a reduced effective simple 
normal crossing divisor; this means that for the decomposition
$$D\,=\, \displaystyle \sum_{i=1}^{\mu} D_i$$ into 
irreducible components, each component $D_i$ is smooth and they intersect transversally. In this paper we will 
state and prove results only for the case of $\mu\,=\,1$, i.e., for smooth divisors. The general case of simple 
normal crossings is not such a big leap from our current study.

\begin{definition}
Let $E$ be a holomorphic vector bundle on $X$ of rank $r$. A quasi-parabolic structure on $E$ over $D$
is a filtration
\begin{equation}\label{filt}
E\vert_{D_i} \,=\, F_1^i \,\supsetneq\, F_2 ^i\,\supsetneq\, \cdots \,\supsetneq\,
F^i_{m_i} \,\supsetneq\, F^i_{m_{i+1}}\, =\, 0\, ,
\end{equation}
where each $ F_j^i$ is a subbundle of $E\vert_{D_i}$ such that they are locally
abelian, which means that for every $x\, \in\, D$ there is a decomposition of
$E_x$ into a direct sum of lines with the property that for any $i$ with $x\, \in\, D_i$, the
filtration of $E\vert_{D_i}$ when restricted to $x$, is given by combinations of these lines. Note that
when $\mu\,=\, 1$, this condition of being locally abelian is automatically satisfied.

A \emph{parabolic structure} is a quasi-parabolic structure as in
\eqref{filt} endowed with \emph{parabolic weights} which are 
collections of rational numbers
$$0\,<\, \alpha_1^i \,\leq\,\alpha_2^i\,\leq\, \,\cdots\, \leq\,\alpha_{r}^i\,<\,1\, ,$$
(where $\alpha_j^i$ can be repeated) associated to the subbundles $F^i_k$, i.e., $\alpha^i_1
\,=\,\ldots\,=\,\alpha^i_{r_{i,1}}$ correspond to $F^i_{1}/F^i_2$, etc where $r_{i,j}$ is the
rank of $F^i_j/F^i_{j+1}$; more precisely, $$\alpha^i_{1+\sum_{j=1}^a r_{i,j}}\,=\,
\alpha^i_{2+\sum_{j=1}^a r_{i,j}}\, =\, \cdots \,=\,
\alpha^i_{\sum_{j=1}^{a+1} r_{i,j}}$$
correspond to $F^i_{a+1}/F^i_{a+2}$ for all $1\, \leq\, a\, \leq\, m_i-1$,
and this common number is called the weight of $F^i_{a+1}/F^i_{a+2}$. A parabolic vector bundle
is one that is equipped with a parabolic
structure. For notational convenience, a parabolic vector bundle $(E,\, \{F^i_j\},\, {\alpha}^i_j)$
will also be denoted as $E_{*}$. The divisor $D$ is called the parabolic divisor for $E_{*}$. 
\label{parabolicdef}
\end{definition}

\begin{remark}
Note that if all the parabolic weights are zero (the ``trivial parabolic structure''), we do 
not call it a parabolic bundle in this paper.
\end{remark}

Take a parabolic vector bundle $E_*$. Maruyama and Yokogawa associate to $E_*$ a filtration of 
coherent sheaves $\{E_t\}_{t\in \mathbb{R}}$ parametrized by $\mathbb{R}$ \cite{MY}. This 
filtration encodes the entire parabolic data. We recall from \cite{MY} some properties of this 
filtration:
\begin{enumerate}
\item the filtration $\{E_t\}_{t\in \mathbb{R}}$ is decreasing as $t$ increases,
meaning $E_{t+t'}\, \subset\, E_t$ for all $t'\, >\, 0$ and $t$,

\item it is left--continuous, meaning for all $t\,\in\, \mathbb{R}$, there is
$\epsilon_t\, >\, 0$ such that the above inclusion of $E_t$ in $E_{t-\epsilon_t}$
is an isomorphism,

\item $E_{t+1}\,=\, E_t\otimes {\mathcal O}_X(-D)$ for all $t$,

\item{} the vector bundle $E$ is $E_0$,

\item for an finite interval $[a,\, a']$, the set
$$
\{t\,\in\, [a,\, a']\,\mid\, E_{t+\epsilon}\, \subsetneq\, E_t~\ \forall \, ~
\epsilon\, >\, 0\}
$$
is finite, and

\item the filtration $\{E_t\}_{t\in \mathbb{R}}$ has a right jump at $t$ if and
only if $t-[t]$ is a parabolic weight for $E_*$.
\end{enumerate}

Fix a very ample line bundle on $X$ to define \textit{degree} of coherent sheaves on $X$.
The parabolic degree of a parabolic bundle $E_*$ as above is defined to be
$$
\text{par-deg}(E_*)\,:=\, \text{degree}(E)+\sum_{i=1}^\mu \sum_{j=1}^{m_i}
\text{degree}(F^i_j/F^i_{j+1})\cdot \text{weight}(F^i_j/F^i_{j+1})\, .
$$
In terms of the filtration $\{E_t\}_{t\in \mathbb{R}}$, we have
$$
\text{par-deg}(E_*)\,=\,r\cdot \text{degree}({\mathcal O}_X(D))+
\int_0^1 \text{degree}(E_t)dt\, .
$$

Now we will recall the definitions of direct sum, tensor product and dual of
parabolic vector bundles.

Let $E_*$ and $V_*$ be parabolic vector bundles with a common parabolic divisor $D$.
The underlying vector bundles for $E_*$ and $V_*$ will be denoted by $E$ and $V$
respectively. Let
$$
\iota\, :\, X\setminus D\, \hookrightarrow\, X
$$
be the inclusion map. Consider the quasi--coherent sheaf $\iota_*\iota^* (E\oplus V)$
on $X$. The
parabolic direct sum $E_*\oplus F_*$ is defined to be the parabolic vector bundle that
corresponds to the filtration $\{E_t\oplus F_t\}_{t\in \mathbb{R}}$ of subsheaves of it.

Next consider the quasi--coherent sheaf $\iota_*\iota^* (E\otimes V)$ on $X$. For any $t\, \in\,
\mathbb{R}$, let $U_t$ be the coherent subsheaf of it generated by
all $E_{s}\otimes V_{t-s}$, $s\, \in\, \mathbb{R}$. The conditions on
$\{E_{b}\}_{b\in \mathbb{R}}$ and $\{V_{b}\}_{b\in \mathbb{R}}$ ensure that this
$U_t$ is indeed a coherent sheaf. The parabolic 
tensor product $E_*\otimes F_*$ is defined to be the parabolic vector bundle that 
corresponds to this filtration $\{U_t\}_{t\in \mathbb{R}}$.

For any $t\, \in\, \mathbb{R}$, define $E_{t+}$ to be $E_{t+\epsilon}$, where $\epsilon
\,>\, 0$ is sufficiently small so that $E_{t+\epsilon}$ is independent of
$\epsilon$ (recall that the filtration parametrized by $\mathbb R$ has finitely
many jumps in each bounded interval so it is constant except for those finitely
many jumps). Therefore, $(E_{-t-1+\epsilon})^*$ is a subsheaf of $\iota_*\iota^* E^*$. The
parabolic dual $E^*_*$ of $E_*$ is defined by the filtration
$\{(E_{-t-1+\epsilon})^*\}_{t\in \mathbb{R}}$. So the underlying vector bundle for the
parabolic dual $E^*_*$ is $(E_{\epsilon-1})^*$.

\subsection{Parabolic bundles and equivariant bundles}

Let $Y$ be a connected smooth complex projective variety and
$$
\Gamma\, \subset\, \text{Aut}(Y)
$$
a finite subgroup of the group of automorphisms of the variety $Y$. A
$\Gamma$--linearized vector bundle over $Y$ of rank $r$ is a holomorphic vector bundle $V$ of
rank $r$ over $Y$ equipped with a holomorphic action of $\Gamma$ such that
\begin{itemize}
\item the projection $V\, \longrightarrow\, Y$ is $\Gamma$--equivariant, and

\item the action of $\Gamma$ on $V$ is fiberwise linear.
\end{itemize}
In other words, $V$ is an orbifold vector bundle; it is also called an
equivariant bundle.

For any point $y\, \in\, Y$, let $\Gamma_y\, \subset\, \Gamma$ be the
isotropy subgroup of $y$ for the action of $\Gamma$ on $Y$.

Assume that that quotient variety $Y/\Gamma$ is smooth. 
Let $$q\, :\, Y\, \longrightarrow\, 
Y/\Gamma$$ be the quotient map. Consider the ramification divisor for $q$; let $$D_q\,\subset\,
Y$$ be the reduced ramification divisor for $q$. We assume that $D_q$ 
is a normal crossing divisor of $Y$.

Take a $\Gamma$--linearized vector bundle $V$ on $Y$. Let 
$\widetilde{D}\, \subset\, D_q$ be the union of all the irreducible components $D'$ of $D_q$ 
with the property that the isotropy subgroup $\Gamma_z$ of every point $z$ of $D'$ acts 
nontrivially on the fiber $V_z$ of $V$ over $z$. By means of the invariant direct image 
construction, $V$ produces a parabolic vector bundle $E_{*}$ on $Y/\Gamma$ with parabolic 
structure over the divisor $q(\widetilde{D})$ \cite{Bi1}, \cite{Bo1}, \cite{Bo2}.

Conversely, given a parabolic vector bundle $E_*$ on $X$ with parabolic structure over
a simple normal crossing divisor $D$, there is a triple $(Y,\, \Gamma, \, V)$ as above
such that
\begin{itemize}
\item $X\,=\, Y/\Gamma$, and

\item $E_*$ coincides with the parabolic vector bundle over $X$ associated to $V$ \cite{Bi1},
\cite{Bo1}, \cite{Bo2}.
\end{itemize}
This covering $Y$ is an example of
``Kawamata covering'' introduced by Kawamata \cite[Theorem 17]{Ka}, \cite[Theorem 1.1.1]{KMM}
in order to prove what is known as Kawamata--Viehweg vanishing theorem.
It should be clarified that the ramification divisor of the above quotient map
$$q \, :\, Y\, \longrightarrow\, X\, =\, Y/\Gamma$$ is in general bigger than $D$.
However on the inverse image $q^{-1}(X\setminus D) \, \subset\, Y$, the vector bundle $V$ is
canonically $\Gamma$--equivariantly
identified with the pullback $q^*(E\vert_{X\setminus D})$ (note that the pulled back bundle
$q^*(E\vert_{X\setminus D})$ has a tautological action of $\Gamma$); in other words, $E\vert_{X\setminus D}$ is the
descent of $V\vert_{q^{-1}(X\setminus D)}$. In particular, for any point $y\, \in\,
q^{-1}(X\setminus D)$, the action on the fiber $V_y$, of the equivariant vector
bundle $V$, of the isotropy subgroup $\Gamma_y$ is trivial.

The above correspondence between the parabolic vector bundles and the orbifold 
vector bundles is compatible with the operations of direct sum, tensor product, 
dualization etc. 

In view of the above, we make the following definitions.

\begin{definition}
Suppose $X$ is a complex manifold and $D\subset X$ is a divisor whose 
components are smooth and intersect transversally. Assume that $(E_{*},\,D)$ is a parabolic 
vector bundle on $X$. A triple $(Y,\,q,\,V)$ is called a \emph{Kawamata cover} of $(X,E_*,\,D)$ if
the following two conditions are satisfied:
\begin{itemize}
\item $q\,:\,Y\,\longrightarrow \,X$ is a finite branched cover of $X$ whose ramification divisor
$$D''\,=\,D \cup D'$$ has smooth transversally intersecting irreducible components, and

\item $E_*$
coincides with the bundle obtained by the invariant direct image construction
of the equivariant vector bundle $V$ over $Y$ equipped with an action of the covering group
$\text{Gal}(q)$ for $q$.
\end{itemize}
Such a Kawamata cover $(Y,\,q,\,V)$ is called \emph{good} if $D''\,=\,D$. Also, A Kawamata cover
$(Y,\,q,\,V)$ is called \emph{locally good around
a point} $p\,\in\, X$ if there is a Zariski open
neighbourhood $p\,\in\, U_p \,\subset\, X$ such that $D'' \cap U_p \,=\, D\cap U_p$. A
Kawamata cover of $(X,E_*,\,D)$ is called \emph{minimal} if its degree is the minimum possible one.
\end{definition}

The following lemma is useful for us.

\begin{lemma}\label{every-point-is-good}
Let $E_*$ be a parabolic vector bundle over $X$ with parabolic divisor $D$. Take a
point $x\, \in\, X$. Then there is Kawamata cover which is good over some Zariski neighborhood
of $x$.
\end{lemma}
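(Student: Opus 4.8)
The plan is to run Kawamata's covering construction but to steer its \emph{auxiliary} branch divisor away from the given point $x$. Since $\mu\,=\,1$, the parabolic divisor $D$ is a single smooth irreducible hypersurface. Let $N$ be a common denominator of all the parabolic weights $\alpha_j$ of $E_*$, so that $N\alpha_j\,\in\,\mathbb{Z}$ for every $j$. The goal is to build a finite Galois cover $q\,:\,Y\,\longrightarrow\,X$ which is ramified to order $N$ along $D$ (enough to realize any weights lying in $\tfrac{1}{N}\mathbb{Z}$ via an equivariant bundle) and whose only additional ramification lies on a divisor disjoint from $x$.

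First I would fix a very ample line bundle $A$ on $X$ for which $A^{\otimes N}\otimes\mathcal{O}_X(-D)$ is again very ample; such an $A$ exists once $A$ is sufficiently positive. Being very ample, this bundle is in particular base-point-free, so a Bertini-type argument applies: a general member $H\,\in\,|A^{\otimes N}\otimes\mathcal{O}_X(-D)|$ is smooth, meets $D$ transversally, and — crucially — avoids $x$. The avoidance is a generic condition because the sections vanishing at $x$ form a proper linear subspace of $H^0(X,\,A^{\otimes N}\otimes\mathcal{O}_X(-D))$, and smoothness together with transversality to the fixed smooth $D$ are the usual generic Bertini conditions. For such an $H$ the divisor $D+H$ lies in $|A^{\otimes N}|$ and $D\cup H$ is simple normal crossing.

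Next I would form the degree-$N$ cyclic cover of $X$ attached to $A$ together with a section cutting out $D+H$, and pass to Kawamata's smooth model \cite{Ka, KMM}: this produces a smooth projective $Y$ with a finite Galois map $q\,:\,Y\,\longrightarrow\,X$, ramified to order $N$ along both $D$ and $H$, whose ramification divisor $D''\,=\,D\cup H$ has smooth, transversally intersecting components. Setting $U_x\,:=\,X\setminus H$, which is a Zariski open neighborhood of $x$ because $x\,\notin\,H$, one checks directly that $D''\cap U_x\,=\,(D\cup H)\cap(X\setminus H)\,=\,D\cap U_x$. Thus the cover is good over $U_x$, and this holds whether or not $x$ lies on $D$.

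Finally, since $N$ is a common denominator of the weights, the correspondence between parabolic and equivariant bundles recalled above \cite{Bi1, Bo1, Bo2} furnishes a $\text{Gal}(q)$-equivariant vector bundle $V$ on $Y$ whose invariant direct image is $E_*$. Here one starts from $q^*E$ — on which the isotropy of any ramification point acts trivially on fibers, so $q^*E$ carries no parabolic weights — and modifies it only along $q^{-1}(D)_{\mathrm{red}}$ to install the flags and weights of $E_*$; in particular no weights are introduced along $q^{-1}(H)_{\mathrm{red}}$, so the parabolic divisor of the descended bundle is exactly $D$. The triple $(Y,\,q,\,V)$ is then the required Kawamata cover, good over $U_x$. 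The main technical point is the simultaneous fulfilment of the three genericity requirements on $H$ (smoothness, transversality to $D$, and avoidance of $x$), each holding on a dense open subset of the base-point-free linear system, together with the verification that the equivariant bundle can be built so that the auxiliary ramification $H$ contributes no parabolic structure.
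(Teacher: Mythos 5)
Your proposal is correct and follows essentially the same route as the paper's own (much terser) proof: both rest on the observation that in Kawamata's construction the auxiliary branch divisor is taken from a very ample linear system, hence moves freely and can be chosen to miss $x$, so the cover is good on the complement of that divisor. You have merely spelled out the details (Bertini for $H\,\in\,|A^{\otimes N}\otimes\mathcal{O}_X(-D)|$, the cyclic cover and its smooth model, and the descent of the parabolic structure) that the paper compresses into citations of \cite{Ka}, \cite{KMM}, \cite{Bi1}, \cite{Bo1}, \cite{Bo2}.
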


\begin{proof}
This follows from the construction of Kawamata cover \cite[Theorem 17]{Ka},
\cite[Theorem 1.1.1]{KMM} and the correspondence between parabolic bundles and equivariant
bundles \cite{Bi1}, \cite{Bo1}, \cite{Bo2}. The divisor $D'$ mentioned above moves
freely as it can be assumed to be very ample. This allows
us to have $D'' \cap U_x \,=\, D\cap U_x$ for suitable $D'$ and open neighborhood
$U_x$ of $x$. 
\end{proof}

It should be clarified that the covering in Lemma \ref{every-point-is-good} depends on the
point $x$.

\subsection{Parabolic bundles and ramified bundles}

One issue with the above correspondence between parabolic bundles and equivariant bundles is that the
ramified Galois covering $(Y,\, q)$ is not uniquely determined by the pair $(X,\, E_*)$. If
$(Y',\, q')$ is a ramified Galois covering of $X$ that factors through the covering $(Y,\, q)$, and the
map $Y'\,\longrightarrow\, Y$ is \'etale, then there is an equivariant vector bundle on $(Y',\, q')$ also
that corresponds to $E_*$. More generally, we can introduce extra divisors $D'$ on $X$ such that
$D\cup D'$ is still a simple normal crossing divisor, introduce trivial parabolic structure over $D'$, and
demand that the covering map ramified over $D'$ also. This non--uniqueness of the covering is addressed by
introducing what are known as ramified bundles, which we will briefly recall; more details can be found
in \cite[Section 2.2]{BD}, \cite[Section 3]{BL}, \cite{BBNR}.

Take any $(Y,\, q,\, V)$ corresponding to $E_*$, and as before denote $\text{Gal}(q)$ by $\Gamma$.
Let $\xi\, :\, F_V\, \longrightarrow\, Y$ be the holomorphic principal $\text{GL}(r,{\mathbb C})$--bundle defined by $V$,
where $r$ as before is the rank of $V$. So the group $\text{GL}(r,{\mathbb C})$ acts on $F_V$ holomorphically
and freely, and each fiber of $\xi$ is an orbit. The key point is that this action of
$\text{GL}(r,{\mathbb C})$ commutes with the action of
$\Gamma$ on $F_V$ given by the action of $\Gamma$ on $V$. Now consider the quotient
\begin{equation}\label{qr}
\widehat{\xi}\, :\, F'_V\, :=\, F_V/\Gamma\, \longrightarrow\, Y/\Gamma \,=\, X\, ,
\end{equation}
where $\widehat{\xi}$ is the descent of $\xi$. Since the action of $\text{GL}(r,{\mathbb C})$ and $\Gamma$
on $F_V$ commute, the quotient $F'_V$ is equipped with an action of $\text{GL}(r,{\mathbb C})$. This
action is free on $\widehat{\xi}^{-1}(X\setminus D)$, because for every $z\, \in\, q^{-1}(X\setminus D)$,
the action of $\Gamma_z$ on the fiber $V_z$ is trivial. This makes $F'_V\vert_{X\setminus D}$ a holomorphic principal
$\text{GL}(r,{\mathbb C})$--bundle over $X\setminus D$. However, over $\widehat{\xi}^{-1}(D)$, the action of
$\text{GL}(r,{\mathbb C})$ has finite isotropies.

A ramified principal $\text{GL}(r,{\mathbb C})$--bundle over $X$ with ramification over $D$ is defined
keeping the above model in mind. More precisely, a ramified principal $\text{GL}(r,{\mathbb C})$--bundle over
$X$ with ramification over $D$ consists of a smooth complex variety $E_{\text{GL}(r, {\mathbb C})}$
equipped with an algebraic right action of $\text{GL}(r, {\mathbb C})$
\begin{equation}\label{e2}
f\, :\,E_{\text{GL}(r, {\mathbb C})}\times \text{GL}(r,
{\mathbb C})\, \longrightarrow\, E_{\text{GL}(r, {\mathbb C})}\, ,
\end{equation}
and a surjective map
\begin{equation}\label{e1}
\widehat{\xi}\, :\, E_{\text{GL}(r, {\mathbb C})}\, \longrightarrow\, X
\end{equation}
such that the following conditions hold:
\begin{enumerate}
\item{} $\widehat{\xi}\circ f \, =\, \widehat{\xi}\circ p_1$, where $p_1$ is
the natural projection of $E_{\text{GL}(r, {\mathbb C})}\times
\text{GL}(r, {\mathbb C})$ to $E_{\text{GL}(r, {\mathbb C})}$,

\item{} for each point $x\, \in\, X$, the action of
$\text{GL}(r, {\mathbb C})$ on the
reduced fiber $\widehat{\xi}^{-1}(x)_{\text{red}}$ is transitive,

\item{} the restriction of $\widehat{\xi}$ to $\widehat{\xi}^{-1}(X
\setminus D)$ makes $\widehat{\xi}^{-1}(X\setminus D)$ a principal
$\text{GL}(r, {\mathbb C})$--bundle over $X\setminus D$ (note that the
first condition implies that $\widehat{\xi}^{-1}(X\setminus D)$ is preserved
by the action of $\text{GL}(r, {\mathbb C})$),

\item{} for each irreducible component $D_i\, \subset\, D$,
the reduced inverse image $\widehat{\xi}^{-1}(D_i)_{\text{red}}$ is a
smooth divisor and
$$
\widehat{D}\, :=\, \sum_{i=1}^\ell \widehat{\xi}^{-1}(D_i)_{\text{red}}
$$
is a normal crossing divisor on $E_{\text{GL}(r, {\mathbb C})}$, and

\item{} for any point $x$ of $D$, and any point
$z\, \in\, \widehat{\xi}^{-1}(x)$, the isotropy
group
\begin{equation}\label{e8}
G_z\, \subset\,\text{GL}(r, {\mathbb C}) \, ,
\end{equation}
for the action of $\text{GL}(r, {\mathbb C})$ on
$E_{\text{GL}(r, {\mathbb C})}$, is a finite group, and if
$x$ is a smooth point of $D$, then the natural action of 
$G_z$ on the quotient line $T_zE_{\text{GL}(r,
{\mathbb C})}/T_z\widehat{\xi}^{-1}(D)_{\text{red}}$ is faithful.
\end{enumerate}

The quotient in \eqref{qr} has all the above properties. Conversely, given any
ramified principal $\text{GL}(r,{\mathbb C})$--bundle over $X$ with ramification over $D$, there
is a parabolic vector bundle on $X$ of rank $r$ with parabolic structure on $D$. More precisely, we have
an equivalence of categories between the parabolic vector bundles on $X$ of rank $r$ with parabolic structure over $D$
and the ramified principal $\text{GL}(r,{\mathbb C})$--bundle over $X$ with ramification over $D$.

\section{Admissible Hermitian metric}\label{Admissible}

For the rest of the paper we assume 
that $D$ is smooth for the sake of convenience. Our results can be easily 
generalized to the case of simple normal crossing divisors.

Let $F$ be a $C^\infty$ complex vector bundle of rank $r$ over a complex manifold $Z$. Let $F_{\text{GL}(r)}\, 
\longrightarrow\, Z$ be the corresponding $C^\infty$ principal $\text{GL}(r,{\mathbb C})$--bundle. Giving a 
Hermitian structure on $F$ is equivalent to giving a $C^\infty$ reduction of structure group of $F_{{\rm 
U}(r)}\,\subset\, F_{\text{GL}(r)}$ to the subgroup ${\rm U}(r)\,\subset\, \text{GL}(r,{\mathbb C})$.

Let $\widehat{\xi}\, :\, E_{\text{GL}(r, {\mathbb C})}\, \longrightarrow\, X$ be a ramified principal
$\text{GL}(r,{\mathbb C})$--bundle over $X$ with ramification over $D$, as in \eqref{e1}. A \textit{Hermitian
structure} on $E_{\text{GL}(r, {\mathbb C})}$ (cf. \cite{BDey}) is a $C^\infty$ submanifold
$$
E_{\text{U}(r)}\, \subset\, E_{\text{GL}(r, {\mathbb C})}
$$
satisfying the following three conditions:
\begin{enumerate}
\item for the action of $\text{GL}(r, {\mathbb C})$ in \eqref{e2}, the submanifold $E_{\text{U}(r)}$
is preserved by $\text{U}(r)\, \subset\, \text{GL}(r, {\mathbb C})$,

\item for each point $x\, \in\, X$, the action of $\text{U}(r)$ on $E_{\text{U}(r)}\bigcap \widehat{\xi}^{-1}(x)$
is transitive, and

\item for each point $z\, \in\, \widehat{\xi}^{-1}(D)\bigcap E_{\text{U}(r)}$, the isotropy subgroup
for the action of $\text{GL}(r, {\mathbb C})$ for $z$ is contained in $\text{U}(r)$.
\end{enumerate}

A couple of comments are in order on the above definition. Take any $x\, \in\, D$. If the isotropy subgroup
of $\text{GL}(r, {\mathbb C})$ for some $z\, \in\, \widehat{\xi}^{-1}(x)\bigcap E_{\text{U}(r)}$
is contained in $\text{U}(r)$, then the isotropy subgroup for every point of 
$\widehat{\xi}^{-1}(x)\bigcap E_{\text{U}(r)}$ is contained in $\text{U}(r)$; this is because any two such isotropy
subgroups are conjugate by some element of $\text{U}(r)$. Since the isotropy subgroup for every $z\, \in\,
\widehat{\xi}^{-1}(D)$ is compact, a conjugate of it is contained in $\text{U}(r)$.

Let $E_*$ be a parabolic vector bundle on $X$ with parabolic structure over $D$. Let
$E_{\text{GL}(r, {\mathbb C})}$ be the corresponding ramified principal
$\text{GL}(r,{\mathbb C})$--bundle over $X$ with ramification over $D$.

\begin{definition}\label{adaptedbundle2}
An admissible Hermitian metric on $E_*$ is a smooth Hermitian metric $H$ on the
vector bundle $E\vert_{X\setminus D}$ such that the $C^\infty$ reduction of structure group
$$
E'_{\text{U}(r)}\, \subset\, 
E_{\text{GL}(r, {\mathbb C})}\vert_{X\setminus D}
$$
to the subgroup ${\rm U}(r)\,\subset\, \text{GL}(r,{\mathbb C})$ over $X\setminus D$ extends to
a Hermitian structure on the ramified principal bundle $E_{\text{GL}(r, {\mathbb C})}$.
\end{definition}

As above, let $E_*$ be a parabolic vector bundle of rank $r$ with
$\widehat{\xi}\, :\, E_{\text{GL}(r, {\mathbb C})}\, \longrightarrow\, X$
the corresponding ramified principal
${\text{GL}(r, {\mathbb C})}$--bundle. Let $V$ be a $\Gamma$--equivariant bundle 
on $Y$ that corresponds to $E_*$.
The $\Gamma$--equivariant holomorphic principal
${\text{GL}(r, {\mathbb C})}$--bundle on $Y$ associated to $V$ will be
denoted by $V_{\text{GL}(r, {\mathbb C})}$. Let
\begin{equation}\label{q0}
q_0\, :\, V_{\text{GL}(r, {\mathbb C})}\, \longrightarrow\, V_{\text{GL}(r, {\mathbb C})}/\Gamma
\,=\, E_{\text{GL}(r, {\mathbb C})}
\end{equation}
be the quotient map.

With the above set--up, we have the following simple but useful lemma.

\begin{lemma}\label{lem-am}
Every admissible Hermitian metric on $E_*$
is the descent of a unique $\Gamma$--invariant Hermitian structure on $V$.

Conversely, if $h$ is a $\Gamma$--invariant Hermitian structure on $V$ such that
the corresponding reduction of structure group $$V_{{\rm U}(r)}\,\subset\,
V_{{\rm GL}(r, {\mathbb C})}$$ to ${\rm U}(r)$ has the property that the quotient
$V_{{\rm U}(r)}/\Gamma$ is a $C^\infty$ submanifold of $V_{{\rm GL}(r, {\mathbb C})}/\Gamma\,=\,
E_{{\rm GL}(r, {\mathbb C})}$, then
$$
(V_{{\rm U}(r)}/\Gamma)\vert_{X\setminus D}\, \subset\,
E_{{\rm GL}(r, {\mathbb C})}\vert_{X\setminus D}
$$
is an admissible Hermitian metric on $E_*$.
\end{lemma}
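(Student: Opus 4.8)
The plan is to use the quotient map $q_0$ of \eqref{q0} to transport reductions of structure group back and forth between $V_{\text{GL}(r, {\mathbb C})}$ and $E_{\text{GL}(r, {\mathbb C})}$. The guiding principle is that $\Gamma$--invariant $\text{U}(r)$--reductions of the frame bundle $V_{\text{GL}(r, {\mathbb C})}$ over $Y$ correspond to Hermitian structures on the ramified bundle $E_{\text{GL}(r, {\mathbb C})}$ via $E_{\text{U}(r)}\,\longleftrightarrow\, q_0^{-1}(E_{\text{U}(r)})$, and that a $\Gamma$--invariant $\text{U}(r)$--reduction of $V_{\text{GL}(r, {\mathbb C})}$ is the same datum as a $\Gamma$--invariant Hermitian structure on $V$. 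Throughout I use that $V_{\text{GL}(r, {\mathbb C})}\,\longrightarrow\, Y$ is an honest principal bundle, so $\text{GL}(r,{\mathbb C})$ acts freely on it, and that $q\,:\,Y\,\longrightarrow\, X$ is Galois, so $\Gamma$ acts transitively on each fibre $q^{-1}(x)$.

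For the forward direction I start with an admissible $H$; by Definition \ref{adaptedbundle2} its reduction over $X\setminus D$ extends to a Hermitian structure $E_{\text{U}(r)}\,\subset\, E_{\text{GL}(r, {\mathbb C})}$, and I set $V_{\text{U}(r)}\,:=\, q_0^{-1}(E_{\text{U}(r)})$. This is $\Gamma$--invariant, being a preimage under the quotient by $\Gamma$, and it is preserved by $\text{U}(r)$ since $q_0$ is $\text{GL}(r,{\mathbb C})$--equivariant and $E_{\text{U}(r)}$ is $\text{U}(r)$--invariant. The key point is that $V_{\text{U}(r)}$ meets each fibre of $V_{\text{GL}(r, {\mathbb C})}\,\longrightarrow\, Y$ in a single $\text{U}(r)$--orbit: given $\widetilde{z}_1,\,\widetilde{z}_2$ in such an intersection (over a common $y\,\in\, Y$), their images lie in one $\text{U}(r)$--orbit of $E_{\text{U}(r)}$ by the transitivity axiom, so $\widetilde{z}_2\,=\,(\gamma\cdot\widetilde{z}_1)\cdot u$ for some $\gamma\,\in\,\Gamma$ and $u\,\in\,\text{U}(r)$; writing $\gamma\cdot\widetilde{z}_1\,=\,\widetilde{z}_1\cdot g$ with $g\,\in\,\text{GL}(r,{\mathbb C})$ one checks that $g$ lies in the isotropy of $q_0(\widetilde{z}_1)$, which by the third axiom of a Hermitian structure is contained in $\text{U}(r)$, whence $\widetilde{z}_2\,\in\,\widetilde{z}_1\cdot\text{U}(r)$. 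Away from $q^{-1}(D)$ the map $q_0$ is an unramified covering, so $V_{\text{U}(r)}$ is visibly a smooth $\text{U}(r)$--reduction there; together with the smoothness across $q^{-1}(D)$ discussed below, this produces a $\Gamma$--invariant Hermitian structure on $V$, i.e. a $\Gamma$--invariant Hermitian metric. Since $q_0^{-1}(E_{\text{U}(r)})/\Gamma\,=\,E_{\text{U}(r)}$, this metric descends to $H$; uniqueness is forced because any descending $\Gamma$--invariant metric must equal $q^*H$ on the dense open set $q^{-1}(X\setminus D)$ and hence everywhere by continuity.

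For the converse, let $h$ be as in the hypothesis, so that $V_{\text{U}(r)}/\Gamma$ is a $C^\infty$ submanifold of $E_{\text{GL}(r, {\mathbb C})}$. Its restriction to $X\setminus D$ is a $\text{U}(r)$--reduction there, hence a smooth Hermitian metric $H$ on $E\vert_{X\setminus D}$, and it remains to verify that $V_{\text{U}(r)}/\Gamma$ satisfies the three axioms of a Hermitian structure on all of $E_{\text{GL}(r, {\mathbb C})}$. Invariance under $\text{U}(r)$ is immediate from equivariance of $q_0$. For fibrewise transitivity, given two points of $(V_{\text{U}(r)}/\Gamma)\cap\widehat{\xi}^{-1}(x)$ I lift them to $\widetilde{z}_1,\,\widetilde{z}_2\,\in\, V_{\text{U}(r)}$ over points $y_1,\,y_2\,\in\, q^{-1}(x)$, pick $\gamma\,\in\,\Gamma$ with $\gamma\cdot y_1\,=\, y_2$, use that $\gamma\cdot\widetilde{z}_1$ and $\widetilde{z}_2$ lie in one $\text{U}(r)$--orbit in the fibre over $y_2$, and push the relation down by $q_0$. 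For the isotropy axiom, if $z\cdot g\,=\, z$ with $z\,=\, q_0(\widetilde{z})$ and $\widetilde{z}\,\in\, V_{\text{U}(r)}$, then $\widetilde{z}\cdot g\,=\,\gamma\cdot\widetilde{z}$ for some $\gamma\,\in\,\Gamma$; both sides lie in $V_{\text{U}(r)}$ and in a single fibre, so they differ by an element of $\text{U}(r)$, and freeness of the $\text{GL}(r,{\mathbb C})$--action on $V_{\text{GL}(r, {\mathbb C})}$ forces $g\,\in\,\text{U}(r)$. Combined with the given smoothness, this shows $V_{\text{U}(r)}/\Gamma$ is a Hermitian structure, so $H$ is admissible.

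The step I expect to be the main obstacle is the behaviour over the divisor, namely establishing in the forward direction that $q_0^{-1}(E_{\text{U}(r)})$ is a genuine $C^\infty$ submanifold across $q^{-1}(D)$, where $q_0$ is ramified and the $\text{GL}(r,{\mathbb C})$--action on $E_{\text{GL}(r, {\mathbb C})}$ acquires finite isotropy. Away from $D$ both directions are routine since $q_0$ is a local diffeomorphism there; the content of the lemma lies precisely in the compatibility of the reduction with these finite isotropy groups. This compatibility is exactly what the third axiom (isotropy contained in $\text{U}(r)$) encodes when passing from $E_{\text{GL}(r, {\mathbb C})}$ up to $V_{\text{GL}(r, {\mathbb C})}$, and what the $C^\infty$--submanifold hypothesis supplies when passing down; the smoothness across $q^{-1}(D)$ should then follow from the local normal form of $q_0$ together with the $\text{U}(r)$--invariance of $E_{\text{U}(r)}$.
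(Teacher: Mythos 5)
Your proposal is correct and follows essentially the same route as the paper: the forward direction takes $V_{{\rm U}(r)} \,=\, q_0^{-1}(E_{{\rm U}(r)})$, and the converse descends $V_{{\rm U}(r)}$ to $V_{{\rm U}(r)}/\Gamma$ and verifies the isotropy axiom exactly as the paper does, by noting that the $\Gamma_y$--orbit of any point of $(V_{{\rm U}(r)})_y$ stays in $(V_{{\rm U}(r)})_y$ and then using freeness of the ${\rm GL}(r,{\mathbb C})$--action on $V_{{\rm GL}(r,{\mathbb C})}$. Your fiberwise verifications (single ${\rm U}(r)$--orbit per fiber, transitivity, uniqueness of the descent) are in fact more detailed than the paper's, and the one point you flag as unresolved --- smoothness of $q_0^{-1}(E_{{\rm U}(r)})$ across $q^{-1}(D)$ --- is likewise asserted rather than proved in the paper's own proof, so it is not a gap relative to the paper.
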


\begin{proof}
Take an admissible Hermitian metric
$$
E'_{\text{U}(r)}\, \subset\, 
E_{\text{GL}(r, {\mathbb C})}\vert_{X\setminus D}
$$
on $E_*$. Let
$$
E_{\text{U}(r)}\, \subset\, E_{\text{GL}(r, {\mathbb C})}
$$
be the Hermitian structure on the ramified principal bundle $E_{\text{GL}(r, {\mathbb C})}$
obtained by extending $E'_{\text{U}(r)}$.
Now
$$
q^{-1}_0(E_{\text{U}(r)})\, \subset\, V_{\text{GL}(r, {\mathbb C})}
$$
is a $\Gamma$--invariant Hermitian structure on $V$, where $q_0$ is the
quotient map in \eqref{q0}. Uniqueness of the
$\Gamma$--invariant Hermitian structure on $V$ is evident.

To prove the converse, take a $\Gamma$--invariant Hermitian structure $h$ on $V$
satisfying the condition in the statement of the lemma. Let 
$$V_{\text{U}(r)}\,\subset\,
V_{\text{GL}(r, {\mathbb C})}$$
be the corresponding reduction of structure group to the subgroup ${\rm U}(r)\, \subset\,
\text{GL}(r, {\mathbb C})$. Since $h$ is $\Gamma$--invariant, the action of $\Gamma$ on
$V_{\text{GL}(r, {\mathbb C})}$ preserves the submanifold $V_{\text{U}(r)}$. So
$V_{\text{U}(r)}/\Gamma$ is equipped with an action of ${\rm U}(r)$.

Since $V_{\text{U}(r)}$ is preserved by the action of $\Gamma$, for any point $y\,\in\, Y$
and any $z\,\in\, (V_{\text{U}(r)})_y$, the orbit of $z$ under the action of the
isotropy subgroup $\Gamma_y\, \subset\, \Gamma$ is contained in $(V_{\text{U}(r)})_y$.
This implies that 
for each point $u\, \in\, \widehat{\xi}^{-1}(D)\bigcap E_{\text{U}(r)}$, the isotropy subgroup
for the action of $\text{GL}(r, {\mathbb C})$ for $u$ is contained in $\text{U}(r)$.
Consequently,
$$
(V_{\text{U}(r)}/\Gamma)\vert_{X\setminus D}\, \subset\,
E_{\text{GL}(r, {\mathbb C})}\vert_{X\setminus D}
$$
is an admissible Hermitian metric on $E_*$.
\end{proof}

Recall that the proof of Lemma \ref{every-point-is-good} is based on the fact that
the divisor $D'$ moves freely. This, combined with the proof
of Lemma \ref{lem-am}, gives the following:

\begin{lemma}\label{lem3}
Let $E_*$ be a parabolic vector bundle on $X$ with parabolic divisor $D$.
Let $H$ be a $C^\infty$ Hermitian metric on $E\vert_{X\setminus D}$ such that for for every
Kawamata covering
$$
q\, :\, Y\, \longrightarrow\, X
$$
for $E_*$, there is a ${\rm Gal}(q)$--invariant Hermitian structure $H'$ on the
${\rm Gal}(q)$--equivariant vector bundle $V$ corresponding to $E_*$ such that $H'$ descends to
$H$. Then $H$ is admissible.
\end{lemma}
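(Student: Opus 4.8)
The plan is to verify directly the extension condition in Definition \ref{adaptedbundle2}: the reduction $E'_{\text{U}(r)}\subset E_{\text{GL}(r,{\mathbb C})}|_{X\setminus D}$ determined by $H$ must extend to a Hermitian structure on the ramified bundle $E_{\text{GL}(r,{\mathbb C})}$. Two observations organize the argument. First, this is a condition localized near $D$: over $X\setminus D$ the reduction $E'_{\text{U}(r)}$ is already a genuine $\text{U}(r)$-reduction, and the three conditions defining a Hermitian structure (invariance, fiberwise transitivity, and isotropy contained in $\text{U}(r)$) are conditions testable in a neighbourhood of each point of $D$, as is the requirement of being a $C^\infty$ submanifold. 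Second, the candidate extension is forced: a Hermitian structure is a closed $\text{U}(r)$-invariant submanifold restricting to $E'_{\text{U}(r)}$ over the dense open set $\widehat\xi^{-1}(X\setminus D)$, so it can only be the closure $\overline{E'_{\text{U}(r)}}$. Thus it suffices to show that this closure is a Hermitian structure in a neighbourhood of each point of $D$; uniqueness then makes the local verifications glue automatically to the global statement.

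First I would fix a point $x\in D$ and, invoking Lemma \ref{every-point-is-good}, choose a Kawamata cover $q\colon Y\to X$ that is good over a Zariski neighbourhood $U_x$ of $x$, so that over $U_x$ the ramification of $q$ is exactly $D\cap U_x$. Writing $\Gamma=\text{Gal}(q)$, the hypothesis supplies a $\Gamma$-invariant Hermitian structure $H'$ on the equivariant bundle $V$ corresponding to $E_*$ that descends to $H$ over $X\setminus D$. I would then run the construction from the converse half of Lemma \ref{lem-am} over $U_x$: the $\Gamma$-invariance of the unitary reduction $V_{\text{U}(r)}\subset V_{\text{GL}(r,{\mathbb C})}$ forces the isotropy at each point of $\widehat\xi^{-1}(D)\cap E_{\text{U}(r)}$ to lie in $\text{U}(r)$, which is precisely condition (3) of a Hermitian structure, while conditions (1) and (2) are inherited from those for $V_{\text{U}(r)}$. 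Since $H'$ descends to $H$, the orbit space $V_{\text{U}(r)}/\Gamma$ restricted to $\widehat\xi^{-1}(U_x)$ restricts to $E'_{\text{U}(r)}$ away from $D$, so it equals $\overline{E'_{\text{U}(r)}}|_{U_x}$ and is a Hermitian structure there.

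The hard part is the hypothesis of Lemma \ref{lem-am} that the quotient $V_{\text{U}(r)}/\Gamma$ be a $C^\infty$ submanifold of $E_{\text{GL}(r,{\mathbb C})}$; this is exactly where goodness of the cover is used. Over $U_x$ the branch locus of $q$ is only $D$, so near a point of $\widehat\xi^{-1}(D)$ the $\Gamma$-action has the standard cyclic local normal form transverse to the divisor, and the isotropy acts on the fibres of $V_{\text{U}(r)}$ through $\text{U}(r)$; in this clean model the orbit space of the unitary reduction is smooth and sits as a submanifold of the orbit space $E_{\text{GL}(r,{\mathbb C})}$. It is the freedom to move the auxiliary divisor $D'$ in the construction of the Kawamata cover (the mechanism behind Lemma \ref{every-point-is-good}) that lets me avoid any extra ramification over $U_x$ and thereby secure this smoothness; without goodness the quotient near a spurious branch divisor is not controlled by the descent data alone.

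Finally I would assemble the pieces. As $x$ ranges over $D$ the neighbourhoods $U_x$ cover $D$, and on each overlap the two candidate extensions both restrict to $E'_{\text{U}(r)}$ over the dense set $\widehat\xi^{-1}(U_x\cap U_{x'}\setminus D)$; being closed submanifolds they must coincide, each being $\overline{E'_{\text{U}(r)}}$ locally. Hence $\overline{E'_{\text{U}(r)}}$ is a Hermitian structure on all of $E_{\text{GL}(r,{\mathbb C})}$ extending $E'_{\text{U}(r)}$, and by Definition \ref{adaptedbundle2} the metric $H$ is admissible. I expect the only genuine subtlety to be the smoothness of the local quotients, handled as above; the localization and the gluing are formal once the uniqueness of the extension is noted.
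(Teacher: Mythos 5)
Your proposal is correct and follows essentially the same route as the paper, whose entire proof is the one-line observation that Lemma \ref{every-point-is-good} (the freely moving divisor $D'$ giving covers good near any point) combined with the converse construction in the proof of Lemma \ref{lem-am} yields the result. Your write-up simply makes explicit the details the paper leaves implicit --- the localization near $D$, the smoothness of the quotient $V_{{\rm U}(r)}/\Gamma$ over the locally good region, and the gluing via uniqueness of the extension --- all consistent with the paper's intended argument.
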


In the next few paragraphs we discuss the above constructions from a concrete 
differentio--geometric point of view. For ease of exposition, in the rest of this section (unless specified otherwise) we 
will deal with good Kawamata covers.

As preparation, whenever we talk of a trivialization around a point on 
$D$, we consider ``adapted frames'', i.e., frames $\{e_1, e_2,\ldots\}$ on a neighbourhood in $X$ of 
a point in $D$ such that when restricted to $D$, the
collection $e_1,\ldots, e_{r_j}$ is a frame for $F_j$ 
($r_{j}-r_{j+1}$ is called the multiplicity of the weight $\alpha_j$).

Fix a good Kawamata cover
\begin{equation}\label{cp}
p\, :\, Y\, \longrightarrow\, X
\end{equation}
such that the parabolic bundle $E_{*}$ is the invariant direct image of an equivariant vector
bundle $V$ over $Y$. The Galois group $\text{Gal}(p)$ will be denoted by $\Gamma$. 

Assume that the adapted frame on $X$ is induced from a frame $\widetilde{e}_i$ on $Y$ such that the action of
$\Gamma$ in this frame is diagonal. As mentioned above, for point $y\, \in\,
p^{-1}(X\setminus D)$, the action of $\Gamma_y$ on $V_y$ is trivial, and $E\vert_{X\setminus D}$
is the descent of $V\vert_{p^{-1}(X\setminus D)}$. Therefore, any $\Gamma$--invariant Hermitian metric
on $V\vert_{p^{-1}(X\setminus D)}$ descends to a Hermitian metric on $E\vert_{X\setminus D}$, and conversely,
every Hermitian metric on $E\vert_{X\setminus D}$ is given by a unique
$\Gamma$--invariant Hermitian metric on $V\vert_{p^{-1}(X\setminus D)}$. Therefore, for the construction of a
singular Hermitian metric on $E$ singular over $D$ we may pretend that $D'$ is absent. In fact, our singular Hermitian metric
on $E$ will be given by a $\Gamma$ invariant Hermitian metric on $V$ for a covering $p$ with minimal degree
of ramification over $D$.

Cover $X\setminus D$ with coordinate open sets $U_{\gamma}$ with coordinates $z_{1,\gamma},z_{2,\gamma},
\ldots,z_{n,\gamma}$
such that $p^{-1}(U_{\gamma})$ is a coordinate open set on $Y$ with coordinates
$w_{1,\gamma},\ldots, w_{n,\gamma}$, the branched cover is given by $w_{1,\gamma} \,=\, z_{1,\gamma} ^{n_{\gamma}}$
(and $w_{i,\gamma} \,=\, z_{i,\gamma} \ \forall \ i\geq 2$), if $n_{\gamma}\,>\,1$ the
component $D$ of the branching divisor for $q$ is given by $z_1=0$, and $V$ is locally trivial over $p^{-1}(U_{\gamma})$. Denote neighbourhoods 
intersecting $D$ by $U_{D,\gamma}$. The sheaf $E_{*}$ on $U_{D,\gamma}$ is generated freely by $z_1 
^{\alpha_{r-j+1}} \widetilde{e}_{j}$. Therefore, the transition functions between 
$U_{D,\gamma}$ and $U_{D,\beta}$ are $g_{\gamma_D \beta_D}(z) \,=\, \mathrm{diag} 
[z_{1,\gamma}^{-\alpha_{r}}\ldots z_{1,\gamma}^{-\alpha_{1}} ]\widetilde{g}(\widetilde{z})_{\gamma \beta} 
\mathrm{diag}[z_{1,\beta} ^{\alpha_{r}} \ldots z_{1,\beta}^{\alpha_{1}}]$ where $\widetilde{g}$ are the 
transition functions of $V$ and $\widetilde{z}$ is the corresponding element in $p^{-1}(z)$. Likewise, 
$g_{\gamma_D \beta}(z)\,=\,\mathrm{diag}[z_{1,\gamma}^{-\alpha_{r}}\ldots z_{1,\gamma}^{-\alpha_1} 
]\widetilde{g}(\widetilde{z})_{\gamma \beta}$.

\begin{definition}
Suppose $E_{*}$ and $G_{*}$ are parabolic bundles with parabolic divisor $D$ on a
complex projective
manifold $X$, and flags, rational weights and transition functions $(F^E_j,\alpha_j=\frac{a_j}{N},\,g^E)$
and $(F^G_k,\beta_j=\frac{b_j}{N},\,g^G)$ respectively, where $N$ is the smallest such integer. Consider
the $N$--fold branched cover $p\,:\,Y\,\longrightarrow\, X$ ramified over a smooth reduced effective
divisor $D$ such that $E_{*}$ and $F_{*}$ are invariant direct images of bundles $V$ and $W$ over
$Y$ with transition functions $\widetilde{g}^V$ and $\widetilde{g}^W$. For the convenience of
differential geometers, we write the transition functions of the aforementioned constructions of
parabolic bundles below. On $X\setminus D$ they are given by their usual constructions. Therefore, we mention
only $g_{\gamma_D \beta}$ and $g_{\gamma_D \beta_D}$. 
\begin{center}
 \begin{tabular}{ | l | p{2cm} | p{2cm} | p{6cm} |}
 \hline
Bundle & Flag & Weights & Transition functions \\ \hline
 $E_{*}^{*}$ & $(F^E_j) ^{*}$ & $\alpha^{*}=1-\alpha_i$ if $\alpha_i >0$ and $0$ otherwise. & 	
{$\!\begin{aligned}
g^{E^{*}}_{\gamma_D \beta_D} &= \mathrm{diag} [z_{1,\gamma}^{-\alpha^{*}_{r}}\ldots z_{1,\gamma}^{-\alpha^{*}_1}]\\ &\times ((\widetilde{g} _{\gamma \beta})^T)^{-1} \mathrm{diag} [z_{1,\beta}^{\alpha_{r}^{*}}\ldots z_{1,\beta}^{\alpha^{*}_1}] \\
g^{E^{*}}_{\gamma_D \beta} &= \mathrm{diag} [z_{1,\gamma}^{-\alpha^{*}_{r}}\ldots z_{1,\gamma}^{-\alpha^{*}_1}] ((\widetilde{g} _{\gamma \beta})^T)^{-1}.
\end{aligned}$}
 \\ \hline
 $E_{*} \oplus G_{*}$ & $F^E _j \oplus F^G_k$ & $\{\alpha_j\} \cup \{\beta_k \}$ & $g = g^E \oplus g^G$. \\ \hline
$\det(E_{*})$ & $\det(E_{*}) \supsetneq 0$ & $\alpha^{*}= \sum \alpha_i \ \mathrm{mod} \ 1$ & $g^{\det(E)}_{\gamma_D \beta_D} = \det(\widetilde{g}_{\gamma \beta})\frac{z_{1,\beta}^{\alpha^{*}}}{z_{1,\gamma}^{\alpha^{*}}}$, and $g^{\det(E)}_{\gamma_D \beta} = \det(\widetilde{g}_{\gamma \beta})\frac{1}{z_{1,\gamma}^{\alpha^{*}}}$ \\ \hline 
 \end{tabular}
\end{center}
\label{usualconstructions}
\end{definition}

Note that the fractional powers $z_1 ^{\alpha}$ in the previous definition and the 
paragraph preceding that are to be taken on a suitably chosen branch of the 
complex plane.

Take any triple $(Y,\, \Gamma, \, V)$ as above such that the parabolic vector 
bundle $E_*$ coincides with the parabolic vector bundle associated to the 
$\Gamma$--linearized $V$. Then the projectivization $\mathbb{P}(E_{*})$ is defined 
to be the quotient $\mathbb{P}(V)/\Gamma$. Since for any $y\, \in\, p^{-1}(X\setminus D)$
the action of the isotropy subgroup $\Gamma_y\, \subset\, \Gamma$ on the fiber $V_y$ is trivial,
the quotient $\mathbb{P}(V)/\Gamma$ is a projective bundle over $X\setminus D$. More precisely,
the pulback of this bundle $$(\mathbb{P}(V)/\Gamma)\vert_{X\setminus D}\, \longrightarrow\, X\setminus D$$
to $Y\setminus p^{-1}(D)$ is canonically $\Gamma$--equivariantly identified with the projective bundle
$$\mathbb{P}(V)\vert_{Y\setminus p^{-1}(D)}\, \longrightarrow\, Y\setminus p^{-1}(D)$$ (any pullback
to $Y$ from $X$ is equipped with a tautological action of $\Gamma$).
The above quotient $\mathbb{P}(V)/\Gamma$ 
depends only on $E_*$ and it is independent of the choice of $(Y,\, \Gamma, \, 
V)$. There is a positive integer $m$ such that the isotropy groups, for the action 
of $\Gamma$ on $\mathbb{P}(V)$, act trivially on the fibers of 
$\mathcal{O}_{\mathbb{P}(V)}(m)$. Therefore, $\mathcal{O}_{\mathbb{P}(V)}(m)$ descends to the quotient 
$\mathbb{P}(V)/\Gamma\,=\, \mathbb{P}(E_{*})$ as a line bundle. Note that the discussion in this paragraph applies equally well even when $Y$ is a general (not necessarily good) Kawamata cover.

For differentio--geometric purposes, we need to study metrics and 
connections on both, the bundle, as well as the base manifold that respect the 
parabolic structure. For a given $E_*$ there may coverings as in \eqref{cp} giving $E_*$ such that the degree of the ramification over $D$ is arbitrarily large. We would be interested in coverings for which this degree is minimal. By a minimal branched cover of $X$ for $E_*$ we will mean a covering $p$ as in \eqref{cp} giving $E_*$ such that the degree of the ramification over $D$ is minimal. As in the previous paragraph, the Kawamata cover may be ramified over a divisor $D \cup D^{'}$ but the parabolic structure over $D^{'}$ is trivial. \\

\indent Now we look at the local picture of an admissible metric. Suppose we choose coordinates $(w_1,\ldots,w_n)$ on a good Kawamata cover $Y$ such that $w_1=0$ is the ramification divisor (assumed to be smooth), $z_1=w_1 ^N, \ z_i=w_i \ \forall \ i\geq 2$ is the quotient map near $D$, and a holomorphic frame $\widetilde{e}_1,\ldots, \widetilde{e}_r$ that is compatible with the flag. Note that the invariant direct image $E_{*}$ is locally freely generated by $e_{j}\,=\,\widetilde{e}_{j} w_1 ^{k_{r+1-j}}$ where $k_{j}$ are the weights of the action of $\Gamma$. 

\begin{lemma}
 The metric $H$ induced on $E_{*}$ is locally (near $D$) of the form $$H_{ij} (z) \,=\,
\overline{z}_1^{\alpha_{r+1-i}}\widetilde{H}_{ij}(w) z_1^{\alpha_{r+1-j}}\, .$$ (on a chosen branch of the complex plane). Conversely, if $H$ is a smooth metric on $E_{*}\vert_{X\setminus D}$ such that $$\widetilde{H}_{ij}(w)
\,=\,\overline{z}_1^{-\alpha_{r+1-i}} H_{ij} (z) z_1 ^{-\alpha_{r+1-j}}$$ extends smoothly (as a function of $w$) and positively across the branch cut then $H$ is induced from a $\Gamma$--invariant metric $\widetilde{H}$ on $V$. 
\label{local-admissibility-condition}
\end{lemma}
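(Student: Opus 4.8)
The plan is to reduce the statement to the sesquilinearity of a Hermitian metric together with the descent already recorded in Lemma~\ref{lem-am}, the only substantive point being the bookkeeping of the fractional powers produced by $z_1=w_1^N$. Throughout I write $k_i=N\alpha_i$, so that near $D$ the generators of $E_*$ are $e_i=w_1^{k_{r+1-i}}\widetilde e_i=z_1^{\alpha_{r+1-i}}\widetilde e_i$ (the last equality on the chosen branch), I take the Hermitian form to be conjugate-linear in its first slot, and I normalise the diagonal $\Gamma$-action so that the generator of $\Gamma\cong\mathbb{Z}/N$ acts by $w_1\mapsto\zeta w_1$ with $\zeta=e^{2\pi\sqrt{-1}/N}$ and on the fibre by $\widetilde e_i\mapsto\zeta^{k_{r+1-i}}\widetilde e_i$; this is exactly the character rendering each $e_i$ invariant. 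For the forward implication I start from a $\Gamma$-invariant metric $\widetilde H$ on $V$, whose components $\widetilde H_{ij}(w)=\widetilde H(\widetilde e_i,\widetilde e_j)(w)$ are smooth on $Y$. By Lemma~\ref{lem-am} it descends to a metric $H$ on $E\vert_{X\setminus D}$ whose value at $z$ is computed by pulling back to any $w\in p^{-1}(z)$; substituting $e_i=z_1^{\alpha_{r+1-i}}\widetilde e_i$ and using conjugate-linearity gives
$$H_{ij}(z)=\widetilde H(e_i,e_j)=\overline{z_1^{\,\alpha_{r+1-i}}}\,\widetilde H_{ij}(w)\,z_1^{\,\alpha_{r+1-j}},$$
which is the asserted local form. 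A one-line check of the phases ($\zeta^{-k_{r+1-i}}$ from the bar, $\zeta^{k_{r+1-j}}$ from the holomorphic factor, and $\zeta^{k_{r+1-i}-k_{r+1-j}}$ from the transformation of $\widetilde H_{ij}$) shows the right-hand side is $\Gamma$-invariant, so it indeed descends to a well-defined function of $z$.

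For the converse I run the same identity backwards. Given $H$ on $E\vert_{X\setminus D}$ I define, on $p^{-1}(X\setminus D)$,
$$\widetilde H_{ij}(w):=\overline{z_1^{\,-\alpha_{r+1-i}}}\,H_{ij}(z)\,z_1^{\,-\alpha_{r+1-j}}=\overline{w_1}^{\,-k_{r+1-i}}\,H_{ij}(w_1^N,w')\,w_1^{\,-k_{r+1-j}},$$
where $w'=(w_2,\dots,w_n)$. By hypothesis the matrix $(\widetilde H_{ij})$ extends smoothly and positive-definitely across $\{w_1=0\}$, so it defines a smooth Hermitian metric $\widetilde H$ on $V$ over the whole Kawamata chart, and hence---the cover being good---over $Y$. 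To see that $\widetilde H$ is $\Gamma$-invariant, note that $H_{ij}$ is pulled back from $X$, hence unchanged under $w_1\mapsto\zeta w_1$, while $z_1=w_1^N$ is likewise unchanged; therefore replacing $w_1$ by $\zeta w_1$ multiplies $\widetilde H_{ij}$ by $\overline\zeta^{\,-k_{r+1-i}}\zeta^{\,-k_{r+1-j}}=\zeta^{\,k_{r+1-i}-k_{r+1-j}}$, which is precisely the transformation law of the components of a $\Gamma$-invariant metric in the diagonal frame $\widetilde e_i$. The forward computation read in reverse then shows that $\widetilde H$ induces $H$, completing the converse.

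I expect the only genuine difficulty to lie in the single word ``extends'': one must be sure that dividing out the fractional factors $z_1^{\pm\alpha}$ --- equivalently, passing to the integer powers $w_1^{\pm k}$ upstairs --- converts the a priori branch-dependent quantities $H_{ij}$ into single-valued objects on $Y$, and that their smooth positive extension across the ramification divisor is automatically compatible with the $\Gamma$-action, i.e.\ that the phase factors $\zeta^{\pm k}$ cancel exactly so that positivity on $p^{-1}(X\setminus D)$ propagates to a positive-definite $\Gamma$-invariant metric on all of $V$. Once the phase cancellations above are in place, everything else is the routine sesquilinear bookkeeping just described.
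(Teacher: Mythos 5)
Your proposal is correct and takes essentially the same route as the paper's proof: the forward direction is the same sesquilinear substitution $e_i = z_1^{\alpha_{r+1-i}}\widetilde{e}_i$, and your converse likewise defines $\widetilde{H}_{ij}$ by the inverse formula, verifies $\Gamma$--invariance through the phase law $\widetilde{H}_{ij}(\zeta w_1, w_2,\ldots) = \zeta^{k_{r+1-i}-k_{r+1-j}}\widetilde{H}_{ij}(w)$, notes branch--independence, and invokes the extension hypothesis across $w_1=0$. The additional bookkeeping you include (normalizing the diagonal action so that each $e_i$ is invariant, and citing Lemma~\ref{lem-am} for the descent) only makes explicit what the paper leaves implicit.
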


\begin{proof} Since $H_{ij}(z)\,=\, \langle e_i,e_j \rangle_{\widetilde{H}}$, we see that 
(using the physicist's convention for inner product) $$H_{ij} (z) \,=\,
\overline{z}_1^{\alpha_{r+1-i}}\widetilde{H}_{ij}(w) z_1 ^{\alpha_{r+1-j}}\, .$$

Conversely, given a metric $H$ on $E_{*}$ over $X\setminus D$, it induces an invariant metric $\widetilde{H}$ on $V$ over $Y\setminus \pi^{-1}(D)$. It follows from the
above observation that $$\widetilde{H}_{ij}(w)
\,=\,\overline{z}_1^{-\alpha_{r+1-i}} H_{ij} (z) z_1 ^{-\alpha_{r+1-j}}.$$ This clearly defines an invariant metric on $V$ because $$\widetilde{H}_{ij}(e^{\sqrt{-1}\theta}w_1,w_2,\ldots) = e^{(k_{r+1-i}-k_{r+1-j})\sqrt{-1}\theta} \overline{z}_1^{-\alpha_{r+1-i}} H_{ij}(z) z_1 ^{-\alpha_{r+1-j}} = e^{(k_{r+1-i}-k_{r+1-j})\sqrt{-1}\theta} \widetilde{H}_{ij}(w)$$
(and hence $\langle a,\, b\rangle_{\widetilde{H}}$ is invariant). Moreover, the
expression clearly does not depend on the branch cut chosen. So the only potentially
problematic points occur on $w_1=0$. But we know that $\widetilde{H}$ extends smoothly and hence defines a smooth invariant metric on $V$.
\end{proof}

Now we show a way to produce examples of admissible Hermitian metrics in the case where minimal good Kawamata covers exist.

\begin{lemma}
Let $H$ be induced by a $\Gamma_1$--invariant metric $\widetilde{H}_1$ on $V_1$ over a
minimal $N_1$-fold good Kawamata cover $Y_1$ over $X\,=\,Y_1/\Gamma_1$. Let $(Y_p,q_p, V_p)$ be
any locally good Kawamata covering of $X$ around $p$, with Galois group $\Gamma_2$ inducing the parabolic bundle $E_{*}$ on $X$.
Then $H\vert_{U_p}$ is induced by a smooth $\Gamma_2$--invariant metric $\widetilde{H}_2 \vert_{q_p^{-1}(U_p)}$ on $V_2\vert_{q_p^{-1}(U_p)}$.
\label{well-definedness-of-admissibility}
\end{lemma}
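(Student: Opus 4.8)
The plan is to reduce everything to the local normal form near $D$ provided by Lemma \ref{local-admissibility-condition} and to exploit the fact that the two ramification orders are comparable. Away from the parabolic divisor there is nothing to prove: since $(Y_p,q_p,V_p)$ is locally good around $p$, the restriction of $q_p$ to $q_p^{-1}(U_p\setminus D)$ is \'etale, $V_2$ is there the tautological $\Gamma_2$--equivariant pullback of $E\vert_{U_p\setminus D}$, and the given smooth metric $H$ pulls back to a smooth $\Gamma_2$--invariant metric. Hence the only issue is to check smoothness and positivity of the induced metric across $q_p^{-1}(D\cap U_p)$, together with $\Gamma_2$--invariance there.

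First I would fix, near a point of $D\cap U_p$, a single adapted holomorphic frame $e_1,\dots,e_r$ of $E$ (this depends only on the flag and the weights, not on any cover) and form the desingularized matrix
$$
\Phi_{ij}(z)\,=\,\overline{z}_1^{-\alpha_{r+1-i}}\,H_{ij}(z)\,z_1^{-\alpha_{r+1-j}},
$$
where $H_{ij}(z)=\langle e_i,e_j\rangle_H$ and $z_1$ is the defining coordinate of $D$. By Lemma \ref{local-admissibility-condition} applied to the minimal good cover $Y_1$ (where $z_1=w_1^{N_1}$), the hypothesis that $H$ is induced by the smooth $\Gamma_1$--invariant metric $\widetilde H_1$ is exactly the statement that $\Phi(w_1^{N_1},w_2,\dots)$ is smooth and positive definite in $w$. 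Dually, by the converse part of Lemma \ref{local-admissibility-condition} applied to $Y_p$ (where $z_1=\zeta_1^{N_2}$), it suffices to show that $\Phi(\zeta_1^{N_2},\zeta_2,\dots)$ is smooth and positive definite in $\zeta$; this furnishes the required smooth $\Gamma_2$--invariant $\widetilde H_2$, the $\Gamma_2$--invariance coming from the same phase computation as in the proof of Lemma \ref{local-admissibility-condition}.

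The key arithmetic step is the divisibility $N_1\mid N_2$. Writing $\alpha_j=a_j/N$ with $N$ the minimal common denominator, a cover ramifying to order $N'$ over $D$ can induce these weights only if $N'\alpha_j\in\mathbb Z$ for every $j$, i.e.\ $N\mid N'$; minimality of $Y_1$ forces $N_1=N$, while $V_2$ inducing $E_*$ forces $N\mid N_2$, whence $N_1\mid N_2$. Setting $m=N_2/N_1$ we then have
$$
\Phi(\zeta_1^{N_2},\zeta_2,\dots)\,=\,\Phi\big((\zeta_1^{m})^{N_1},\zeta_2,\dots\big),
$$
which is the smooth positive function $\Phi(w_1^{N_1},w_2,\dots)$ composed with the holomorphic map $w_1=\zeta_1^{m}$, hence smooth and positive definite in $\zeta$. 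Equivalently and more conceptually, over the local disks the order-$m$ map $\zeta_1\mapsto\zeta_1^{m}$ realizes $V_2$ as the equivariant pullback of $V_1$ (the isotropy weight $k^{(2)}_j=N_2\alpha_j$ is $m$ times $k^{(1)}_j=N_1\alpha_j$), so $\widetilde H_2$ is literally the pullback of $\widetilde H_1$.

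The main obstacle is bookkeeping rather than conceptual: one must ensure that the matrix $\Phi$ entering the two applications of Lemma \ref{local-admissibility-condition} is genuinely the same object, i.e.\ that a common adapted frame of $E$ can be used for both covers and that passing between the cover-adapted frames $\widetilde e^{(1)}_j w_1^{k^{(1)}}$ and $\widetilde e^{(2)}_j\zeta_1^{k^{(2)}}$ does not spoil smoothness. This is handled by noting that any two adapted frames of $E$ differ by a holomorphic, flag-preserving gauge transformation, under which smoothness and positivity of the desingularized metric are preserved, together with the compatibility of the local quotient maps through $w_1=\zeta_1^{m}$. Once this is in place, gluing the local extensions with the metric already defined on $q_p^{-1}(U_p\setminus D)$ yields the global smooth $\Gamma_2$--invariant metric $\widetilde H_2$ on $V_2\vert_{q_p^{-1}(U_p)}$, as claimed.
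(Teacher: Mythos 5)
Your proposal is correct and takes essentially the same route as the paper's proof: reduce to the local normal form of Lemma \ref{local-admissibility-condition}, use minimality to obtain the integrality $N_1 \mid N_2$ (your justification via $N'\alpha_j \in \mathbb{Z}$ is in fact more detailed than the paper's one-line assertion), and conclude smoothness of the desingularized matrix on the second cover by composing the smooth matrix from the first cover with the holomorphic map $w_1 = \zeta_1^{m}$ (up to a unit). Note, however, that what you set aside as ``bookkeeping'' is precisely the body of the paper's argument: the paper works directly with the two cover-induced frames $e_i$, $f_I$ and defining coordinates related by $Z_1 = g(z)z_1$, and its displayed computation \eqref{newframe}, concluded via the weight ordering $\alpha_I \geq \alpha_J$ for $I \geq J$, is exactly the verification that your flag-preserving holomorphic gauge change preserves smoothness and positivity of the desingularized metric (using that entries below the diagonal blocks vanish on $D$, that weights lie in $[0,1)$, and that $N_1 \mid N_2$ turns the resulting fractional powers of $z_1$ into integral powers of the cover coordinate).
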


\begin{proof}
By minimality, $\frac{N_2}{N_1}=u$ is an integer $\geq 1$.

Suppose we choose coordinates $z,w$ near $D: z_1=0$ on $X$, $Y_1$
respectively (such that $z_1 =w_1 ^{N_1}$) and an admissible frame $e_i$ for $E_{*}$ induced from $Y_1$. Then 
\begin{gather}
(\widetilde{H}_1)_{ij} (w)=\overline{w}_1^{-k_{r+1-i}} H_{ij} (z(w)) w_1 ^{-k_{r+1-j}}.
\label{oldframe}
\end{gather}
Let $Z,W$ (such that $Z_1=W_1^{N_2}$) be coordinates near $D: Z_1=0$ on $X$, $Y_2$ respectively and an admissible frame $f_I$ for $E_{*}$ induced from $Y_2$. Clearly, $Z_1 = g(z) z_1$ where $g(z) \neq 0$ is a holomorphic function. Therefore, $w_1 = z_1 ^{1/N_1}=\frac{1}{g(z) ^{1/N_1}}W_1 ^u$, i.e., $w_1$ is a holomorphic function of $W_1$. Note that $H$ induces an invariant metric $\widetilde{H}_2$ outside the ramification divisor on $Y_2$. Let $f_I = t_{I}^{\ j}e_j$ (where we used the Einstein summation convention). By definition of admissibility, $t_{I}^{\ j}=0$ whenever $j>m(I)$ where $m(1),m(2)\ldots, m(r_{1})=r_1; m(r_{1}+1),\ldots,m_{r_2}=r_2,\ldots$. The expression for $\widetilde{H}_2$ near $D$ is given by
\begin{gather}
(\widetilde{H}_2)_{IJ} (W)=\overline{W}_1^{-k_{r+1-I}} H_{IJ} (Z(W)) W_1 ^{-k_{r+1-J}}
= \overline{Z}_1^{-\alpha_{r+1-I}} H_{IJ} (Z(W)) Z_1 ^{-\alpha_{r+1-J}}\nonumber \\
= \overline{g(z)} ^{-\alpha_{r+1-I}} g(z)^{-\alpha_{r+1-J}} \overline{z}_1^{-\alpha_{r+1-I}} H_{IJ} (Z(W)) z_1 ^{-\alpha_{r+1-J}} \nonumber \\
= \overline{g(z)} ^{-\alpha_{r+1-I}} g(z)^{-\alpha_{r+1-J}} \overline{z}_1^{-\alpha_{r+1-I}} H_{IJ} (Z(W)) z_1 ^{-\alpha_{r+1-J}} \nonumber \\
= \overline{g(z)} ^{-\alpha_{r+1-I}} g(z)^{-\alpha_{r+1-J}} \overline{z}_1^{-\alpha_{r+1-I}} t_I ^{\ i} \overline{t}_J ^{\ j}H_{ij} (Z(W)) z_1 ^{-\alpha_{r+1-J}}.
\label{newframe}
\end{gather}
As in Lemma \ref{local-admissibility-condition}, $\widetilde{H}_2$ can only be problematic on $W_1
\,=\,0$. Since the weights satisfy $\alpha_I \,\geq\, \alpha _J$ if $I\geq J$, and the expression
in \eqref{oldframe} is a smooth function of $w$ (which is in turn a smooth function of $W$), we see that 
$\widetilde{H}_2 (W)$ is smooth even at the origin.
\end{proof}

\begin{remark}\label{locally-good-local-coord}
Lemma \ref{local-admissibility-condition}, being a local condition, applies even to locally good Kawamata covers $Y_1$.
\end{remark}
\section{Positivity and ampleness}\label{positivity-and-ampleness}

Suppose $(Y,q,V)$ is a Kawamata cover of $(X,E_*,D)$. Then a $\Gamma$--invariant Hermitian metric on $V$ produces a $\Gamma$--invariant 
Hermitian metric on $\mathcal{O}_{\mathbb{P}(V)}(1)$, and hence a
$\Gamma$--invariant Hermitian metric on every $\mathcal{O}_{\mathbb{P}(V)}(n)$. Therefore,
if $\mathcal{O}_{\mathbb{P}(V)}(m)$ descends to $\mathbb{P}(E_{*})$, the descended line bundle
gets a Hermitian metric. 

It is known that the parabolic vector bundle $E$ is Hartshorne ample (respectively, Hartshorne nef)
if the vector bundle $V$ is Hartshorne ample (respectively, Hartshorne nef) \cite{Bi}, \cite{BN},
\cite{BL}.

Now we define Griffiths positivity of an admissible Hermitian metric.
\begin{definition}
Suppose $H$ is an admissible Hermitian metric on $E_*$. Then $H$ is said to be Griffiths (respectively, Nakano) positively curved if for every locally good Kawamata cover $(Y_p, q_p, V_p)$ around an arbitrary point $p$, the induced Hermitian metric $\widetilde{H}_p$ is so (in the usual sense of positivity) on $q^{-1}(U_p)$. 
\end{definition}

In particular, if an admissible metric is positively curved, then it is so in the usual sense on $X\setminus D$ because of Lemma \ref{every-point-is-good}. Moreover, for any Kawamata cover $(Y,q, V)$, it is easy to see that the induced smooth Hermitian metric $\widetilde{H}$ is Griffiths non--negatively curved in the usual sense with strict positivity outside the branching divisor.

\begin{remark}\label{easy-implication}
If there is a good Kawamata cover, then it is locally good over every open set and hence by definition the bundle $(V,\widetilde{H})$ is Griffiths positively curved. Therefore, $V$ is Hartshorne ample and hence $E$ is parabolic Hartshorne ample. However, if there is no good Kawamata cover, it is not obvious whether the parabolic Griffiths positivity of $(E_*, H)$ implies the Hartshorne 
ampleness of $V$ (which in turn implies the parabolic Hartshorne ampleness of $E$) or not. The best we can say directly is that $V$ is Hartshorne nef because the induced metric is positively curved away from the branching divisor. When there is no good Kawamata cover it is non-trivial (lemma \ref{compatibility-of-parabolic-and-non-parabolic-Griff-conj}) to prove that $V$ is actually Hartshorne ample.
\end{remark}

We have the following obvious analogue of the usual Griffiths conjecture.

\emph{Parabolic Griffiths conjecture} :
 There is an admissible metric $H$ such that $(E_*, H)$ is Griffiths positively curved if and only if $E_{*}$ is Hartshorne ample.

Before proceeding further, we define the notion of an admissible metric on $X$ with cone singularities 
on an effective reduced irreducible divisor $D$ with simple normal crossings.

\begin{definition}
If $(X,D)$ is a compact projective manifold with an effective smooth reduced divisor $D$, then
an $0\leq \alpha<2$--admissible Hermitian (respectively, K\"ahler) metric on $TX$ is a
Hermitian (respectively, K\"ahler) metric $\omega$ on $X\setminus D$ such that near $D$, in
coordinates $z_1, \ldots, z_n$ such that $D$ is $z_1\,=\,0$,
\begin{gather}
\omega-(\displaystyle \vert z_1 \vert^{-\alpha} dz_1 \wedge d\overline{z}_1 + \displaystyle \sum_{i=2}^n dz_i \wedge d\overline{z}_i) 
\label{admissible-Kahler-local-model}
\end{gather}
is smooth.
\label{admissible-Kahler-metric}
\end{definition}

\begin{remark}
Suppose $\alpha \,=\, 2-\frac{2}{N}$ for a positive integer $N$. If there is a good Kawamata $N$--fold branched cover $Y$ branched over the divisor $D$ (thus $X=Y/\Gamma$ where $\Gamma$ is a finite
group), then is not hard to see that an $\alpha$--admissible K\"ahler metric in this case is induced by a $\Gamma$--invariant K\"ahler metric on $Y$ and vice--versa. In the general case, suppose we cover $X$ by finitely many neighbourhoods $U_p$
that admit locally good Kawamata covers $(Y_p, \,q_p,\, V_p)$. Assume that $Y_p$ are endowed with K\"ahler metrics $\omega_p$. Then on $U_p$ these metrics induce admissible K\"ahler metrics $\omega_{p,\alpha}$. Using a partition--of--unity one can patch these to get an admissible Hermitian metric $\omega_{\alpha}$. \\
\indent Actually, every $(X,D)$ admits an $\alpha$--admissible K\"ahler metric if $X$ is compact K\"ahler. Indeed, take any smooth Hermitian metric $h$ on the bundle $[D]$. Suppose $\sigma$ is a defining section of $[D]$ and $\omega$ is a K\"ahler metric on $X$. Then $k\omega + \spbp \vert \sigma \vert_h^{2-\alpha}$ is an admissible K\"ahler metric for large enough $k$.
This metric induces K\"ahler metrics on $q_p^{-1}(U_p)$ for any locally good Kawamata cover $(Y_p,q_p)$.
\label{orbifold-Kahler}
\end{remark}

The following lemma reduces the problem of checking parabolic Griffiths positivity to usual Griffiths positivity on an open set.

\begin{lemma}
Suppose $N$ is the degree of any locally good minimal Kawamata cover of $(X,E_*,D)$. Suppose $H$ is an admissible Hermitian metric. Then $(E_{*},H)$ is parabolic Griffiths
positive if and only if on $X\setminus D$, $$\Theta_H \,\geq\, C \omega_{\alpha}$$ in the
Griffiths sense where $\omega_{\alpha}$ is any $\alpha\,=\,2-\frac{2}{N}$ admissible metric and $C>0$ is a constant.
\label{parabolic-Griffiths-positivity}
\end{lemma}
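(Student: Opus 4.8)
The plan is to pull everything back to a minimal locally good Kawamata cover, where parabolic Griffiths positivity becomes ordinary Griffiths positivity, and to recognize that the inequality $\Theta_H\ge C\omega_\alpha$ downstairs is precisely the pullback of a Griffiths inequality upstairs. Fix a locally good minimal Kawamata cover $p\colon Y\to X$ of degree $N$ and let $(V,\widetilde{H})$ be the induced equivariant bundle with its $\Gamma$--invariant metric, as in Lemmas \ref{lem-am} and \ref{well-definedness-of-admissibility}. Over $Y\setminus p^{-1}(D)$ the covering is \'etale and $(V,\widetilde{H})$ is canonically identified with $p^{*}(E,H)$, so by functoriality of the Chern curvature under pullback we have $\Theta_{\widetilde{H}}=p^{*}\Theta_H$ there. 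The decisive point is the calibration $\alpha=2-\frac{2}{N}$: because $p$ ramifies to order $N$ along $p^{-1}(D)$, the pullback $p^{*}\omega_\alpha$ extends across $p^{-1}(D)$ as a \emph{smooth positive} (K\"ahler) form on all of $Y$; this is exactly the content of Remark \ref{orbifold-Kahler}. Concretely, in the local model $z_1=w_1^{N}$ one computes $p^{*}(|z_1|^{-\alpha}dz_1\wedge d\bar z_1)=N^{2}\,dw_1\wedge d\bar w_1$, so the singular factor is absorbed exactly. Finally, since any two $\alpha$--admissible metrics differ by a bounded smooth term and are therefore mutually uniformly equivalent on the compact $X$, the validity of ``$\Theta_H\ge C\omega_\alpha$ for some $C>0$'' is independent of the choice of $\omega_\alpha$.

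For the direction ``$\Theta_H\ge C\omega_\alpha\Rightarrow$ parabolic Griffiths positive'' I would pull the Griffiths inequality back: for $y\in Y\setminus p^{-1}(D)$, $v\in T_yY$, and $s$ a local section of $V$, functoriality and the fact that $p_{*}$ is an isomorphism give $\langle\Theta_{\widetilde{H}}(v,\bar v)s,s\rangle=\langle\Theta_H(p_{*}v,\overline{p_{*}v})s,s\rangle\ge C\,(p^{*}\omega_\alpha)(v,\bar v)\langle s,s\rangle$. Every term here is smooth on all of $Y$, and the inequality holds on the dense open set $Y\setminus p^{-1}(D)$, so by continuity it persists across $p^{-1}(D)$. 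As $p^{*}\omega_\alpha$ is strictly positive on $Y$, this yields $\langle\Theta_{\widetilde{H}}(v,\bar v)s,s\rangle>0$ for all nonzero $v,s$; that is, $\widetilde{H}$ is Griffiths positive in the usual sense on $Y$. Applying this to a minimal cover around each point gives parabolic Griffiths positivity.

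For the converse I would argue by compactness. Assume $\widetilde{H}$ is ordinarily Griffiths positive on $Y$. On the unit sphere bundle of $TY\oplus V$ the continuous function $(v,s)\mapsto\langle\Theta_{\widetilde{H}}(v,\bar v)s,s\rangle$ is strictly positive, while $(v,s)\mapsto(p^{*}\omega_\alpha)(v,\bar v)\langle s,s\rangle$ is continuous and positive; their ratio therefore attains a positive minimum $C>0$ on the compact sphere bundle, giving $\Theta_{\widetilde{H}}\ge C\,p^{*}\omega_\alpha$ on $Y$. Restricting to $Y\setminus p^{-1}(D)$ and descending through the \'etale quotient recovers $\Theta_H\ge C\omega_\alpha$ on $X\setminus D$. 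When no global good cover exists I would run this over a finite subcover: by Lemma \ref{every-point-is-good} cover the compact $X$ by finitely many $U_p$ carrying locally good minimal covers, shrink to relatively compact $V_p\Subset U_p$ still covering $X$, extract a constant $C_p$ on each compact $p^{-1}(\overline{V_p})$, and take $C=\min_p C_p$.

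The main obstacle is concentrated entirely on the ramification divisor $p^{-1}(D)$. Away from it both directions are routine, since $p$ is \'etale and Griffiths positivity transfers verbatim; the real content is that the weighted bound $\Theta_H\ge C\omega_\alpha$, with the \emph{exact} exponent $\alpha=2-\frac{2}{N}$, is what makes $p^{*}\omega_\alpha$ a genuine smooth positive form on $Y$, so that the degenerate pullback of $\omega_\alpha$ precisely cancels the vanishing of $dp$ along $p^{-1}(D)$. This calibration is why the equivalence is formulated for minimal covers: for a non-minimal cover of degree $uN$ the form $p^{*}\omega_\alpha$ would vanish to order $2(u-1)$ along the ramification and so could not force strict positivity there. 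It is precisely this cancellation that lets the continuity extension in the forward direction and the compactness extraction in the converse both go through cleanly.
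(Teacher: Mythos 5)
Your proof is correct, and its engine is the same as the paper's: the calibration $\alpha=2-\frac{2}{N}$ makes the weight $\vert z_1\vert^{-\alpha}$ cancel exactly against the Jacobian factor $\vert w_1\vert^{2(N-1)}$ of the degree-$N$ ramification, so the weighted inequality $\Theta_H\ge C\omega_\alpha$ downstairs is the same statement as Griffiths positivity of $\widetilde{H}$ upstairs against an honest K\"ahler metric. The execution, however, is genuinely different. The paper works componentwise in adapted frames: it uses Lemma \ref{local-admissibility-condition} to get the curvature relation \eqref{curvature-of-parabolic-bundle}, rewrites $\Theta_{\widetilde{H}_p}\ge C\omega_p$ in the $z$--coordinates, patches the induced admissible metrics $\omega_{p,\alpha}$ by a partition of unity, and dismisses the other direction with ``the converse follows by retracing.'' You instead argue invariantly: $\Theta_{\widetilde{H}}=p^{*}\Theta_H$ on the \'etale locus, $p^{*}\omega_\alpha$ extends to a smooth positive form (your local-model computation, in the spirit of Remark \ref{orbifold-Kahler}), and the inequality crosses $p^{-1}(D)$ by continuity of both sides. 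Your route buys two things the paper leaves implicit. First, genuine uniformity in the converse: pointwise Griffiths positivity on the non-compact set $q_p^{-1}(U_p)$ does not by itself produce a constant $C$, and your shrinking to $V_p\Subset U_p$ together with compactness of the sphere bundle over $q_p^{-1}(\overline{V_p})$ supplies exactly what the paper's one-line claim ``Griffiths positive if and only if $\Theta_{\widetilde{H}_p}\ge C\omega_p$'' silently assumes. Second, your order-of-vanishing computation for a cover of degree $uN$ (vanishing to order $2(u-1)$ along the ramification) explains why minimality must appear in the hypothesis, a point the paper never spells out. The one gloss you share with the authors: the independence of the choice of $\omega_\alpha$, and the strict positivity of $p^{*}\omega_\alpha$ in directions tangent to $p^{-1}(D)$, use that an admissible metric is comparable to one descended from a K\"ahler metric on a cover; Definition \ref{admissible-Kahler-metric} as literally stated would allow metrics whose pullback degenerates tangentially along the ramification, and both your argument and the paper's implicitly exclude these via Remark \ref{orbifold-Kahler}.
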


\begin{proof}
We recall that $(E_{*},H)$ is parabolic Griffiths positive if and only if the induced metric $\widetilde{H}_p$
is Griffiths positive on $U_p$ (and of course Griffiths non--negative on $Y_p$) for every locally good Kawamata cover and every point $p\in X$. Now, the Hermitian metric $\widetilde{H}_p$
is Griffiths positive if and only if $\Theta_{\widetilde{H}_p} \,\geq\, C \omega_p$ in the Griffiths sense where $\omega_p$ is any K\"ahler metric on $Y_p$. Using Lemma \ref{local-admissibility-condition} we compute the curvature $\Theta_H$ of $H$ in $U_p$ where $p\in D$ as follows:
\begin{gather}
(\Theta_H)_{ij} (z) \,=\, z_1 ^{-\alpha_{r+1-i}} (\Theta_{\widetilde{H}})_{ij}(w(z)) z_1 ^{\alpha_{r+1-j}}. 
\label{curvature-of-parabolic-bundle}
\end{gather} 
Now $\omega_p$ induces an $\alpha$--admissible metric $\omega_{p,\alpha}$ on $X$. The
inequality $\Theta_{\widetilde{H}_p} \geq C \omega_p$ when written in the $z$
coordinates is equivalent to
$$z_1 ^{-\alpha_{r+1-i}} (\Theta_H)_{ij} (z) z_1^{\alpha_{r+1-j}} \,\geq\, 
C \omega_{p,\alpha}\, .$$ Elsewhere this inequality is obvious. Therefore $\Theta_H \geq C \omega_{\alpha}$ where $\omega_{\alpha}$ is constructed by patching together $\omega_{p,\alpha}$. But since this holds for one admissible metric, it is easy to see that it does so for all. 
The converse follows by retracing the arguments above.
\end{proof}

The next lemma shows that the curvature of an admissible metric is a current for line bundles.

\begin{lemma}
If $(L_{*},h)$ is a parabolic line bundle with an admissible metric on
$X$, then the curvature $\Theta_h$ extends to
a closed current on $X$ that agrees with a smooth form outside
$D$. Moreover, $c_1(h) + \alpha [D]$ is an $L^1$ form (smooth outside $D$).
\label{curvature-current}
\end{lemma}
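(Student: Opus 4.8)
The plan is to read off the local shape of the admissible metric from Lemma~\ref{local-admissibility-condition} and then interpret the curvature distributionally, so that closedness becomes automatic and the singular part can be extracted by the Poincar\'e--Lelong formula. Since $L_*$ has rank one, work near a point of $D$ with a (locally good) Kawamata cover $p\colon Y\to X$ given by $z_1=w_1^N$ as in the paragraph preceding Lemma~\ref{local-admissibility-condition}; let $\Gamma=\mathrm{Gal}(p)$, let $\widetilde H$ be the $\Gamma$--invariant metric on the line bundle $V$ inducing $h$, and write $\alpha$ for the single parabolic weight of $L_*$. By Lemma~\ref{local-admissibility-condition} (valid here by Remark~\ref{locally-good-local-coord}) the metric has the form
$$h(z)\,=\,|z_1|^{2\alpha}\,\widetilde H(w(z))\, ,$$
where $\widetilde H(w(z))$ is smooth and positive on $Y$ and, being $\Gamma$--invariant, descends to a bounded continuous positive function on $X$. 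Consequently $\log h=\alpha\log|z_1|^2+\log\widetilde H$ is locally integrable on $X$, the first summand being $L^1_{\mathrm{loc}}$ and the second bounded.

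First I would produce the closed current. Because $\log h\in L^1_{\mathrm{loc}}(X)$, the formula $\Theta_h:=-\spbp[\log h]$ (so that $c_1(h)=\tfrac{1}{2\pi}\Theta_h=-\ddc[\log h]$) defines a current on all of $X$, and it is automatically $d$--closed since $d\circ\spbp$ annihilates every current. On $X\setminus D$ the potential $\log h$ is smooth, so there this current is exactly the pointwise curvature form; thus $\Theta_h$ extends to a closed current on $X$ agreeing with a smooth form off $D$, which is the first assertion.

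Next I would isolate the singular part. By the Poincar\'e--Lelong formula $\ddc[\log|z_1|^2]=[D]$, the decomposition of $\log h$ gives
$$c_1(h)\,=\,-\ddc[\log h]\,=\,-\alpha\,[D]-\ddc[\log\widetilde H]\, ,$$
so it remains to show that $c_1(h)+\alpha[D]=-\ddc[\log\widetilde H]$ is represented by an $L^1$ form, smooth away from $D$. Smoothness off $D$ is clear. For integrability I would substitute $w_1=z_1^{1/N}$ into the smooth form $\ddc\log\widetilde H$ on $Y$: the factor $|dw_1/dz_1|^2$ makes the worst coefficient blow up like $|z_1|^{2/N-2}$, and $\int_{|z_1|<1}|z_1|^{2/N-2}\,dV<\infty$ because $2/N-1>-1$; the mixed and remaining coefficients are no worse, so the pointwise form $\ddc\log\widetilde H$ lies in $L^1(X)$.

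The step that needs genuine care --- and the one I expect to be the main obstacle --- is to rule out a residual current carried by $D$, i.e.\ to prove that the distribution $\ddc[\log\widetilde H]$ equals integration against the $L^1$ form just estimated, with no extra $D$--concentrated term. I would settle this by passing to the cover, where $\log\widetilde H$ is genuinely smooth: for a test form $\chi$ on a neighbourhood admitting the locally good cover, pull back using $\ddc\circ p^*=p^*\circ\ddc$ and the smoothness of $p^*\chi$, integrate by parts on $Y$ (no boundary terms, the branch locus having measure zero), and push back down; the pairings $\langle\ddc[\log\widetilde H],\chi\rangle$ and $\int_{X\setminus D}(\ddc\log\widetilde H)\wedge\chi$ then coincide after the common factor $|\Gamma|^{-1}$ is cancelled, and a partition of unity globalises this local identity. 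Hence $\ddc[\log\widetilde H]=[\ddc\log\widetilde H]$ is the advertised $L^1$ form, completing the proof.
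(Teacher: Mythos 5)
Your proof is correct, and its backbone coincides with the paper's: the local expression $h(z)=|z_1|^{2\alpha}\widetilde H(w(z))$ from Lemma \ref{local-admissibility-condition}, the Poincar\'e--Lelong splitting $c_1(h)+\alpha[D]=-\ddc[\log\widetilde H]$, and the estimate that on descent the coefficients blow up no worse than $|z_1|^{2/N-2}$, hence are integrable. Where you genuinely differ is in how the distributional claims are verified. The paper works directly on $X$: it establishes closedness by pairing the current with test data over the complement $B_\epsilon$ of an $\epsilon$-tubular neighbourhood of $D$ and letting the boundary contribution vanish as $\epsilon\to 0$ (the same mechanism as its Lemma \ref{invariant-closed-forms}), and it simply writes the identity $c_1(h)=-\alpha[D]+\frac{\sqrt{-1}}{2\pi}\overline{\partial}\partial\ln\widetilde h_p$ without isolating the issue you call the main obstacle. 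You instead define the extension as $-\spbp$ applied to the $L^1_{\mathrm{loc}}$ potential $\log h$, which makes closedness automatic, and then rule out residual mass on $D$ by pulling the pairing back to the cover, integrating by parts among smooth objects there, and descending with the degree factor. Your route is the more careful one on precisely the point the paper elides --- its displayed formula tacitly asserts that the distributional $\overline{\partial}\partial$ of $\ln\widetilde h_p(w(z))$ carries no current concentrated on $D$, which is exactly your final step --- and it sits squarely in the standard pluripotential framework for singular metrics on line bundles. What the paper's $\epsilon$-limit argument buys in exchange is independence from any scalar potential: the identical computation recycles verbatim for the higher-rank Chern--Weil currents of Lemma \ref{invariant-closed-forms} and Definition \ref{parabolic-chern-weil}, where your rank-one trick is unavailable.
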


\begin{proof}
Obviously $\Theta_h$ is smooth outside $D$. In a neighbourhood of a point in $D$, let
$(Y_p, q_p, V_p, \widetilde{h}_p)$ be a locally good Kawamata cover. Then $h(z)\,=\,
\widetilde{h}_p(w)\vert z_1 \vert^{2\alpha}$. Therefore,
\begin{gather}
c_1(h) = -\alpha [D] + \frac{\sqrt{-1}}{2\pi}\overline{\partial}_z\partial_z \ln \widetilde{h}_p(w) \nonumber \\
= -\alpha [D] + \frac{\sqrt{-1}}{2\pi}\Theta_p(w(z))_{ij} dw^i \wedge dw^{\overline{j}}\nonumber \\
=-\alpha [D] + \frac{\sqrt{-1}}{2\pi}\Bigg [\frac{1}{N^2}\vert z_1 \vert^{2/N-2}\Theta(w(z))_{11} dz^1 \wedge d\overline{z}^1 + \displaystyle \sum_{i=2}^{n}\frac{1}{N}z_1 ^{1/N-1}\Theta(w(z))_{1i} dz^1 \wedge d\overline{z}^i \nonumber \\
 + \sum_{i=2}^{n} \frac{1}{N}\overline{z}_1 ^{1/N-1}\Theta(w(z))_{1i} dz^i \wedge d\overline{z}^1 + \sum_{i,j=2}^{n}\frac{}{}\Theta(w(z))_{ij} dz^i \wedge d\overline{z}^j\Bigg ] \nonumber \\
\Rightarrow c_1(h) + \alpha [D] = \frac{\sqrt{-1}}{2\pi}\Bigg [\frac{1}{N^2}\vert z_1 \vert^{2/N-2}\Theta(w(z))_{11} dz^1 \wedge d\overline{z}^1 + \displaystyle \sum_{i=2}^{n}\frac{1}{N}z_1 ^{1/N-1}\Theta(w(z))_{1i} dz^1 \wedge d\overline{z}^i \nonumber \\
 + \sum_{i=2}^{n} \frac{1}{N}\overline{z}_1 ^{1/N-1}\Theta(w(z))_{1i} dz^i \wedge d\overline{z}^1 + \sum_{i,j=2}^{n}\frac{}{}\Theta(w(z))_{ij} dz^i \wedge d\overline{z}^j\Bigg ]
\end{gather}
It is easy to see that $c_1(h) + \alpha [D]$ is an $L^1$ form. It is clearly closed
away from $D$. As a current, suppose $f$ is a smooth compactly supported
$(n-1,n-1)$--form in a coordinate neighbourhood $B$ of $D$; if $B_{\epsilon}$ is everything in $B$ outside a tubular neighbourhood of $D$ of size $\epsilon$, then
\begin{gather}
\displaystyle \int_{B_{\epsilon}} \pbp f (c_1(h) + \alpha [D]) = \frac{\sqrt{-1}}{2\pi}\int_{\partial B_{\epsilon}} \pbp f \partial \ln \widetilde{h}(w(z))
\end{gather}
The latter is easily seen to go to $0$ as $\epsilon \rightarrow 0$. Therefore $c_1(h) + \alpha[D]$ and hence $c_1(h)$ are closed currents.
\end{proof}

Now we prove that for line bundles, our metric notion of parabolic ampleness coincides with the algebro--geometric one in \cite{Bi}. 

\begin{lemma}
A parabolic line bundle $(L_{*},H)$ is ample in the metric sense above for some admissible metric $H$ if and only if $L_{*}$ is parabolic ample in the algebro--geometric sense. 
\label{parabolic-ample-lines}
\end{lemma}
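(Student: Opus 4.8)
The plan is to pass through the associated equivariant line bundle and thereby reduce the metric statement to a cohomological one. Recall that $L_*$ corresponds to an equivariant line bundle $V$ on a Kawamata cover $p\colon Y\to X$, and that in the algebro--geometric sense of \cite{Bi} the bundle $L_*$ is parabolic ample precisely when $V$ is ample on $Y$. Writing $\alpha=a/N$ for the weight and $\widetilde{D}=p^{-1}(D)_{\mathrm{red}}$, a local computation with the generators $e=w_1^{a}\widetilde{e}$ shows $c_1(V)=p^*\big(c_1(L)+\alpha[D]\big)=:p^*c_1^{\mathrm{par}}(L_*)$; since $p$ is finite and surjective, $V$ is ample if and only if the $\mathbb{Q}$--class $c_1^{\mathrm{par}}(L_*)=c_1(L)+\alpha[D]$ is ample on $X$. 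Thus it suffices to prove that $L_*$ carries an admissible parabolic Griffiths positive metric if and only if $c_1^{\mathrm{par}}(L_*)$ is ample.

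For the direction ($\Leftarrow$) I would build the metric by hand. Fix an $\alpha=2-\tfrac{2}{N}$ admissible K\"ahler metric $\omega_\alpha$ as in Remark \ref{orbifold-Kahler}, whose class is a fixed K\"ahler class $[\omega_\alpha]=k[\omega]$. If $c_1^{\mathrm{par}}(L_*)$ is ample then $c_1^{\mathrm{par}}(L_*)-\epsilon[\omega_\alpha]$ is K\"ahler for small $\epsilon>0$, hence represented by a smooth K\"ahler form $\eta'$. Set $\beta_0=\eta'+\epsilon\,\omega_\alpha$, a closed form in the class $c_1^{\mathrm{par}}(L_*)$ which is positive on $X\setminus D$ with $\beta_0\ge\epsilon\,\omega_\alpha$ and whose singularity along $D$ is precisely the one produced by admissible metrics in Lemma \ref{curvature-current}. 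Using the global $\partial\overline{\partial}$--lemma I would produce a singular metric $h$ on $L$ with curvature current $\beta_0-\alpha[D]\in c_1(L)$; its potential differs from $\alpha\log|z_1|^2$ by a function with the regularity of an $\omega_\alpha$--potential, so that $h=|z_1|^{2\alpha}\cdot(\text{smooth positive})$ near $D$. By Lemma \ref{local-admissibility-condition} (and Lemma \ref{lem3}) such an $h$ is admissible, and by construction $\Theta_h\ge C\omega_\alpha$ on $X\setminus D$, so Lemma \ref{parabolic-Griffiths-positivity} yields parabolic Griffiths positivity.

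For ($\Rightarrow$), suppose $H$ is admissible with $(L_*,H)$ parabolic Griffiths positive. By Lemma \ref{parabolic-Griffiths-positivity} we have $\Theta_H\ge C\omega_\alpha$ on $X\setminus D$, and by Lemma \ref{curvature-current} the $L^1$ form $\beta=c_1(h)+\alpha[D]$ is closed, represents $c_1^{\mathrm{par}}(L_*)$, and satisfies $\beta\ge\tfrac{C}{2\pi}\omega_\alpha>0$ off $D$. To deduce ampleness of $c_1^{\mathrm{par}}(L_*)$ I would first treat the case where a good Kawamata cover $p\colon Y\to X$ exists: there $p^*\omega_\alpha$ is a genuine smooth K\"ahler form (Remark \ref{orbifold-Kahler}) and $p^*\beta=c_1(\widetilde{h})$ for the smooth metric $\widetilde{h}=p^*h/|w_1|^{2a}$ on $V$, whence $\Theta_{\widetilde{h}}\ge C'p^*\omega_\alpha>0$, so $V$ is positive, hence ample, and $c_1^{\mathrm{par}}(L_*)$ is ample.

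The main obstacle is this last step in the absence of a good Kawamata cover, exactly the difficulty flagged in Remark \ref{easy-implication}: on an arbitrary cover the induced metric is only semi--positive along the spurious ramification $p^{-1}(D')$, so the pullback argument only gives nefness of $V$. To upgrade nef to ample I would instead argue directly on $X$, verifying the Nakai--Moishezon inequalities $\int_W\big(c_1^{\mathrm{par}}(L_*)\big)^{\dim W}>0$ for every subvariety $W\subset X$ by representing the class with $\beta$ and using $\beta\ge\tfrac{C}{2\pi}\omega_\alpha$ together with the integrability of $\omega_\alpha^{\dim W}$. The delicate point is to check that $\beta^{\dim W}$ carries no mass on $W\cap D$, so that $\int_W\beta^{\dim W}$ genuinely computes the intersection number; this is where essentially all the work lies, the two constructions above being otherwise routine.
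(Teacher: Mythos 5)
Your overall route coincides with the paper's: both reduce the statement (via \cite{Bi}) to the assertion that $c_1(L)+\alpha[D]$ is a K\"ahler class, prove the backward direction by an explicit construction of an admissible positively curved metric, and prove the forward direction by verifying the Nakai--Moishezon criterion using the curvature current $\beta=c_1(H)+\alpha[D]$ of Lemma \ref{curvature-current}. Your backward direction is correct, and in one respect more careful than the paper's own: the paper twists a smooth metric so that its curvature off $D$ equals a smooth K\"ahler form $\omega$, whose pullback to a Kawamata cover degenerates along the ramification divisor, whereas your cushion $\epsilon\,\omega_\alpha$ guarantees $\Theta_h\geq \epsilon\,\omega_\alpha$ on $X\setminus D$, which is exactly the criterion of Lemma \ref{parabolic-Griffiths-positivity} and gives genuine strict positivity upstairs. (The regularity claim for the potential coming from the global $\partial\overline{\partial}$--lemma should be justified by comparing with an explicit local potential, the difference being pluriharmonic and hence smooth; but that is routine, and Lemma \ref{local-admissibility-condition} then applies as you say.)

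The forward direction, however, contains a genuine gap, and you acknowledge it yourself by deferring ``essentially all the work.'' Moreover, the deferred difficulty is mislocated: for a subvariety $W\not\subseteq D$, the positivity $\beta\geq \frac{C}{2\pi}\omega_\alpha$ off $D$ together with the $L^1$ property of $\beta$ already gives the required inequality, so ``no mass on $W\cap D$'' is not where the problem lies. The case that actually requires an argument is $W\subseteq D$: there $\beta^{\dim W}\vert_W$ does not even make sense naively, because the term $\alpha[D]$ has no restriction to a subvariety of $D$, and the intersection number $(L+\alpha D)^{\dim W}\cdot W$ involves the normal bundle of $D$. This is precisely the case the paper works out: it writes $C=D\cap E$ locally for another subvariety $E$, regularizes the three currents $[D]$, $[E]$ and $\widetilde{\omega}=\Theta_H+\alpha[D]$ (replacing $\vert z_1\vert^2$ by $\vert z_1\vert^2+\epsilon^2$ in the potentials of $[D]$ and $\widetilde{\omega}$), and uses Bedford--Taylor \cite{BT} weak convergence to conclude $(L+\alpha D)^k\cdot C>0$. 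Your alternative fallback, proving ampleness of $V$ on a not-necessarily-good cover, runs into exactly the obstruction of Remark \ref{easy-implication} (one only gets nefness), so without the $W\subseteq D$ regularization argument the forward implication remains unproved.
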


\begin{proof}
A parabolic line bundle is algebro--geometrically parabolic ample if and only if 
$L+\alpha [D]$ is a K\"ahler class \cite{Bi}.

Suppose $L+\alpha D$ is a K\"ahler class $[\omega]$ and assume that $h_{L,0}$ and 
$h_{D,0}$ are metrics on $L$ and $[D]$ respectively. Then $$c_{1}(h_{L,0})+ \alpha 
c_1(h_{D,0}) \,=\, \omega+ \frac{1}{2\pi}\spbp \phi\, .$$ Defining a new metric 
$h_{L}\,=\,h_{L,0}e^{\phi}$ we see that $$c_1 (h_L)+\alpha c_1(h_D)\,=\,\omega\, .$$ Now define 
a singular metric on $L$ as $h_{par,L}=h_L \vert \sigma \vert_{h_{D,0}}^{\alpha}$ 
where $\sigma$ is the canonical section of $[D]$. Away from $D$, we have 
$c_1(h_{par,L})=\omega$. If $Y_p$ is any locally good minimal branched cover of $X$ around $p \in D$ and we choose 
coordinates $z$ on $X$ and $w$ on $Y$ so that near $p$, $w_1 ^N \,=\,z_1$, $w_i\,=\,z_i \ 
\forall \ i\geq 2$, then $\sigma \,=\,z_1$ and $\vert z_1 \vert^{-2\alpha} h_{par,L} \,=\, 
h_L h_{D,0}^{\alpha}$ which is a smooth positive function of $z$ and hence of $w$ 
in any neighbourhood of $w_1=0$. Thus by Lemma \ref{local-admissibility-condition} 
$(L_{*},h_{par,L})$ is parabolic ample in the metric sense.

Conversely, suppose $(L_{*},\,H)$ is parabolic ample in the metric sense. Then by Lemma \ref{parabolic-Griffiths-positivity} $\Theta_H \geq K\omega_{\alpha}$ away from $D$. Putting a singular metric $\frac{1}{\vert \sigma \vert^2}$ on $[D]$, we see that $L+\alpha [D]$ is represented by the current
$\widetilde{\omega}\,=\,\Theta_H+\alpha[D]$. This current is a positive measure from Lemma \ref{curvature-current}. It is easy to see (using the Bedford--Taylor \cite{BT} definition of products of currents) that the Nakai--Moishezon criterion is verified, i.e., that $(L+\alpha [D])^k. C >0$ for all $k$--dimensional subvarieties $C$. Indeed, if $C$ is not contained in $D$, then it is trivial because the current $\widetilde{\omega}\,>\,0$ and in $L^1$. If $C$ is contained in $D$, then locally $C= D \cap E$ where $E$ is another subvariety. Now approximating the currents $2\pi [D]=\spbp \ln \vert z_1 \vert^2, [E], \widetilde{\omega}$ by $D_{\epsilon}, E_{\epsilon}, \widetilde{\omega}_{\epsilon}$ (in the case of $[D]$ and $\widetilde{\omega}$, simply replacing $\vert z_1 \vert^2$ by $\vert z_1 \vert^2+\epsilon^2$) and taking limits we see that $(L+\alpha D)^k.C >0$. (The approximations weakly converge by Bedford--Taylor theory.) Hence $L+\alpha [D]$ is a K\"ahler class.
\end{proof}

The next lemma shows that parabolic Griffiths positivity implies parabolic Hartshorne ampleness 
in the metric sense. This is the ``easy'' direction of the Griffiths conjecture.

\begin{lemma}
Suppose $(E_*,H)$ is a parabolic bundle on $X$ with an admissible metric $H$. This induces an admissible metric $h$ on the parabolic bundle $\mathcal{O}_{\mathbb{P}(E_*)}(1)_{*}$
over $\mathbb{P}(E_*)$. Moreover, if $H$ is parabolic Griffiths positive, $E$ is parabolic Hartshorne ample.
\label{compatibility-of-parabolic-and-non-parabolic-Griff-conj}
\end{lemma}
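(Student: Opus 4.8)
The plan is to reduce the whole statement to the line-bundle case already settled in Lemma \ref{parabolic-ample-lines}, using the tautological bundle over the projectivization. Throughout, write $\widehat{D}\subset \mathbb{P}(E_*)$ for the smooth divisor lying over $D$ (it is smooth since $D$ is smooth and $\mathbb{P}(E_*)\to X$ restricts to the fibre bundle $\mathbb{P}(E)\vert_D\to D$), so that the earlier line-bundle results apply verbatim on $\mathbb{P}(E_*)$ with parabolic divisor $\widehat{D}$.

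First I would construct the induced metric $h$. By Lemma \ref{lem-am}, the admissible metric $H$ is the descent of a unique $\Gamma$--invariant Hermitian structure $\widetilde{H}$ on $V$, where $(Y,\Gamma,V)$ is a Kawamata cover realizing $E_*$. Fibrewise, $\widetilde{H}$ induces a smooth $\Gamma$--invariant Fubini--Study metric on $\mathcal{O}_{\mathbb{P}(V)}(1)$ over all of $\mathbb{P}(V)$. Since $\mathbb{P}(E_*)=\mathbb{P}(V)/\Gamma$ and $\mathcal{O}_{\mathbb{P}(E_*)}(1)_*$ is by definition the parabolic line bundle corresponding to the $\Gamma$--equivariant line bundle $\mathcal{O}_{\mathbb{P}(V)}(1)$, applying the converse part of Lemma \ref{lem-am} (equivalently Lemma \ref{lem3}) to the quotient $\mathbb{P}(V)\longrightarrow\mathbb{P}(E_*)$ shows that this $\Gamma$--invariant metric descends to an admissible metric $h$ on $\mathcal{O}_{\mathbb{P}(E_*)}(1)_*$. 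This proves the first assertion.

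Next I would transfer positivity. Assume $H$ is parabolic Griffiths positive. Fix an arbitrary point $\widehat{p}\in\mathbb{P}(E_*)$ lying over $p\in X$ and, using Lemma \ref{every-point-is-good}, choose a locally good Kawamata cover $(Y_p,q_p,V_p)$ around $p$, so that $\widetilde{H}_p$ is Griffiths positive on $q_p^{-1}(U_p)$. Projectivizing produces a locally good Kawamata cover $\mathbb{P}(V_p)\longrightarrow\mathbb{P}(E_*)$ around $\widehat{p}$ (its ramification over the relevant neighbourhood is exactly $\widehat{D}$), on which $h$ is induced by the Fubini--Study metric of $\widetilde{H}_p$. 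By the standard correspondence between Griffiths positivity of a Hermitian vector bundle and positivity of the curvature of the induced metric on the tautological line bundle over its projectivization, the induced metric on $\mathcal{O}_{\mathbb{P}(V_p)}(1)$ is positively curved on the total space over $q_p^{-1}(U_p)$. As $\widehat{p}$ and the cover were arbitrary, $h$ is parabolic Griffiths positive as a metric on the \emph{line} bundle $\mathcal{O}_{\mathbb{P}(E_*)}(1)_*$, i.e.\ it is ample in the metric sense of Lemma \ref{parabolic-ample-lines}.

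Finally I would conclude by Lemma \ref{parabolic-ample-lines}, applied on $\mathbb{P}(E_*)$ with the smooth divisor $\widehat{D}$: the parabolic line bundle $\mathcal{O}_{\mathbb{P}(E_*)}(1)_*$ is parabolic ample in the algebro--geometric sense, i.e.\ its class plus the weight times $[\widehat{D}]$ is a K\"ahler class. By the definition of parabolic Hartshorne ampleness in \cite{Bi} this says precisely that $E$ is parabolic Hartshorne ample. The main obstacle, and the reason this lemma has content beyond Remark \ref{easy-implication}, is the case where no \emph{global} good Kawamata cover exists: there one cannot deduce Hartshorne ampleness of $V$ directly, since the globally induced metric is only positive away from the branch divisor (nef in the limit). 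The argument must therefore avoid asserting ampleness of $V$ on $\mathbb{P}(V)$ and instead route entirely through the single descended line bundle $\mathcal{O}_{\mathbb{P}(E_*)}(1)_*$ and the Nakai--Moishezon/current criterion packaged in Lemma \ref{parabolic-ample-lines}. A secondary technical point is that the positivity of $h$ must be controlled uniformly up to $\widehat{D}$ rather than merely generically; this is exactly where parabolic (as opposed to generic) Griffiths positivity and the comparison with an admissible metric $\omega_\alpha$ from Lemma \ref{parabolic-Griffiths-positivity} are used.
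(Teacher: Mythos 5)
Your proposal is correct and follows essentially the same route as the paper's proof: descend $\widetilde{H}$ to get $h$ and use Lemma \ref{lem-am} for admissibility, transfer Griffiths positivity of $\widetilde{H}_p$ to positivity of the induced metric on $\mathcal{O}_{\mathbb{P}(V_p)}(1)$ (with Lemma \ref{parabolic-Griffiths-positivity} making the positivity cover-independent), and conclude via Lemma \ref{parabolic-ample-lines}. The extra detail you supply (the divisor $\widehat{D}$, the pointwise check on projectivized locally good covers) is a faithful expansion of the paper's terser argument, not a different method.
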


\begin{proof}
Suppose $(Y,q,V)$ is a Kawamata cover of $(X,E_*,D)$. By definition, $H$ is the descent of a
smooth Hermitian metric $\widetilde{H}$ on $V$. Since $$\mathbb{P}(E_*)\,=\,\mathbb{P}(V)/\Gamma$$
and $\mathcal{O}_{\mathbb{P}(E_*)}(1)_{*}$ is the invariant direct
image of $\mathcal{O}_{\mathbb{P}(V)}(1)$, the smooth metric $\widetilde{h}$ that $\widetilde{H}$ induces on $\mathcal{O}_{\mathbb{P}(V)}(1)$ induces a unique metric $h$ downstairs on $\mathcal{O}_{\mathbb{P}(E_*)}(1)_{*}$. Since the cover $(Y,q,V)$ is arbitrary, $h$ is admissible (by
Lemma \ref{lem-am}). Moreover, since $\widetilde{h}$ is positively curved whenever
$\widetilde{H}$ is, by Lemma \ref{lem-am} and Lemma \ref{parabolic-Griffiths-positivity}, $h$ is
positively curved. Hence, by Lemma \ref{parabolic-ample-lines} we are done.
\end{proof}

Finally, we recall the notion of stability (and semistability) in the
parabolic case and recall the existence of Hermite--Einstein admissible metrics
in the parabolic setting when the weights $\alpha_i\,=\,\frac{k_i}{N}$ are rational.

Take a parabolic bundle $E_*$. For any coherent subsheaf $F\, \subset\, E$, the parabolic
structure on $E$ induces a parabolic structure on $F$. This induced parabolic structure will
be denoted by $F_*$. A parabolic bundle $E_*$ is called \textit{parabolic stable} (respectively,
\textit{parabolic semistable}) if for any coherent subsheaf $0\,\not=\, F\, \subsetneq\, F$ with $E/F$
torsionfree, we have
$$
\frac{\text{par-deg}(E_*)}{\text{rank}(E)} \, <\, \frac{\text{par-deg}(F_*)}{\text{rank}(F)}
\ \ \text{(respectively,\,~}\frac{\text{par-deg}(E_*)}{\text{rank}(E)} \,
\leq\, \frac{\text{par-deg}(F_*)}{\text{rank}(F)}{\rm )}\, .
$$

A parabolic vector bundle $E_*$ is called \textit{polystable} if
\begin{enumerate}
\item it is parabolic semistable, and 

\item it is a direct sum of parabolic stable bundles.
\end{enumerate}

Mehta and Seshadri proved that a parabolic vector bundle over a Riemann surface admits
a unitary flat connection if and only if it is polystable of parabolic degree zero \cite{MS}.
Biquard proved that a parabolic vector bundle over a Riemann surface admits
a Hermite--Einstein connection if and only if it is polystable \cite{Biq1}. This was generalized
to parabolic bundles on higher dimensional varieties in \cite{Biq}, \cite{BDey}.

Let $q\, :\, Y\, \longrightarrow\, X$ be a good Kawamata cover with Galois group
$\Gamma$, and $V\,\longrightarrow\, Y$ be a $\Gamma$--equivariant bundle
such that $E_*$ corresponds to $V$.

We will show that $E_*$ is semistable if and only if $V$ is semistable. First assume
that $E_*$ is not semistable. Let $F\, \subset\, E$ be a subsheaf that violates
the semistability condition. Then the subsheaf of $V$ generated by $q^*F$ such that
the quotient of $V$ by it is torsionfree contradicts the semistability condition for
$V$. Conversely, if $V$ is not semistable, consider the first term $W$ of the
Harder--Narasimhan filtration of $V$. From the uniqueness of the Harder--Narasimhan
filtration it follows that the subsheaf $W$ is preserved by the action of $\Gamma$ on $V$.
Hence the invariant direct image $(p_*W)^\Gamma$ is a subsheaf of $E$. This subsheaf
violates the semistability condition for $E_*$.

By an identical argument it follows that $E_*$ is polystable if and only if $V$ is 
polystable; we just need to replace the Harder--Narasimhan filtration by the socle 
filtration (see \cite[page 23, Lemma 1.5.5]{HL} for the socle filtration).

If $E_*$ is polystable, then consider the Hermite--Einstein connection on the
polystable vector bundle $V$. From the uniqueness of the Hermite--Einstein connection it 
follows that it is preserved by the action of $\Gamma$. Hence it descends to a
Hermite--Einstein connection on $E_*$. Conversely, a Hermite--Einstein connection on
$E_*$ produces an Hermite--Einstein connection on $V$, which implies that $V$ is
polystable. Hence $E_*$ is polystable. So $E_*$ is polystable if and only if it
admits an Hermite--Einstein structure.

\subsection{Parabolic Griffiths conjecture for Riemann surfaces}

Take a short exact sequence of parabolic bundles
\begin{equation}\label{abc}
0\, \longrightarrow\, A_*\, \longrightarrow\, B_* \, \longrightarrow\, C_*
\, \longrightarrow\, 0\, .
\end{equation}
We will show that
if $A_*$ and $C_*$ are Nakano positive then so is $B_*$. To prove this it suffices
to show that if
$$
0\, \longrightarrow\, A'\, \longrightarrow\, B' \, \longrightarrow\, C'
\, \longrightarrow\, 0\, .
$$
is a short exact sequence of equivariant vector bundles, and both
$A'$ and $C'$ admit equivariant Hermitian structures that are Nakano positive,
then $B'$ also admits an Hermitian structure which is Nakano positive. To prove that
$B'$ admits an Hermitian structure which is Nakano positive, observe that
Lemma 2.2 of \cite{Umemura} extends to equivariant set--up without any change.
Therefore, we conclude that $B_*$ is Nakano positive if $A_*$ and $C_*$ are so.

Armed with this observation about short exact sequences, we may prove the parabolic version of Griffiths' conjecture for Riemann surfaces.

\begin{theorem}\label{lemgr}
Take a parabolic vector bundle $E_*$ on a Riemann surface arising from a good Kawamata cover. Then the following two
are equivalent:
\begin{enumerate}
\item $E_*$ is parabolic ample.

\item $E_*$ is Nakano positive.
\end{enumerate}
\end{theorem}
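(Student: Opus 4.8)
The plan is to prove both implications by passing to the fixed good Kawamata cover $q\,:\,Y\,\longrightarrow\, X$ with Galois group $\Gamma$ and working with the associated $\Gamma$--equivariant bundle $V$ on the Riemann surface $Y$. Two elementary reductions make this effective. First, since $\dim_{\mathbb{C}} Y\,=\,1$, the tangent bundle $TY$ is a line bundle, so every element of $TY\otimes V$ is a simple tensor; hence Griffiths and Nakano positivity coincide on $Y$, and correspondingly the two parabolic notions coincide on $X$. Second, because the cover is good it is locally good over every open set, so by Lemma \ref{lem-am} a $\Gamma$--invariant Hermitian structure on $V$ descends to a globally defined admissible metric on $E_*$, and (by Lemma \ref{parabolic-Griffiths-positivity}) the parabolic positivity of the descent is detected by a single cover--independent curvature inequality on $X\setminus D$.

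The implication $(2)\Rightarrow(1)$ is immediate from the machinery already in place. If $(E_*,H)$ is Nakano positive then, by the coincidence of the two notions on a curve, it is Griffiths positive, and Lemma \ref{compatibility-of-parabolic-and-non-parabolic-Griff-conj} yields that $E$ is parabolic Hartshorne ample, which is what parabolic ampleness means here.

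The substance is in $(1)\Rightarrow(2)$, the analogue of the Umemura--Campana--Flenner theorem. By the recalled equivalence (\cite{Bi},\cite{BN},\cite{BL}), $E_*$ being parabolic ample means that the equivariant bundle $V$ is Hartshorne ample on $Y$. I would then reproduce Umemura's construction of a Griffiths positive metric $\Gamma$--equivariantly. Consider the Harder--Narasimhan filtration of $V$: by uniqueness it is preserved by $\Gamma$, and its graded pieces are semistable. Hartshorne's numerical criterion on a curve (every quotient bundle has positive degree) forces the minimal Harder--Narasimhan slope to be positive, so each graded piece is a semistable bundle of positive slope, hence itself ample. Refining further by the canonical, and therefore $\Gamma$--invariant, socle filtration, one reduces to $\Gamma$--equivariant polystable bundles of a single positive slope $\mu$; each such bundle carries a Hermite--Einstein metric, unique up to scaling on its stable summands and hence choosable $\Gamma$--invariantly, whose curvature equals $\mu\,\omega\otimes\mathrm{Id}$ with $\mu\,>\,0$ and is thus Griffiths (equivalently Nakano) positive. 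One finally assembles these positively curved pieces along the two filtrations using the equivariant extension result established above: an extension $0\,\longrightarrow\, A'\,\longrightarrow\, B'\,\longrightarrow\, C'\,\longrightarrow\, 0$ of $\Gamma$--equivariant Nakano positive bundles is again $\Gamma$--equivariantly Nakano positive. This produces a $\Gamma$--invariant Nakano positive metric $\widetilde H$ on $V$, which by Lemma \ref{lem-am} descends to an admissible metric $H$ on $E_*$; since $\widetilde H$ is Griffiths positive on $Y$, the cover--independent inequality of Lemma \ref{parabolic-Griffiths-positivity} holds, so $(E_*,H)$ is parabolic Griffiths, hence Nakano, positive.

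The main obstacle is carrying Umemura's filtration--and--extension argument through $\Gamma$--equivariantly, so that the resulting metric is genuinely $\Gamma$--invariant and descends. The crucial point is that every ingredient is canonical: the Harder--Narasimhan and socle filtrations are unique and so automatically $\Gamma$--stable, and the Hermite--Einstein metrics on the polystable graded pieces are unique up to the scalings on stable factors, so an invariant choice exists. The one genuinely analytic input is that the extension lemma respects the equivariant structure, which is precisely the content of the $\Gamma$--equivariant form of \cite[Lemma 2.2]{Umemura} already noted. A secondary verification is that ampleness of $V$ propagates positivity of slope to all Harder--Narasimhan graded pieces, which is exactly Hartshorne's criterion for ampleness on a curve.
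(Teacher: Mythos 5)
Your proposal is correct, and its core is the same as the paper's: Umemura's filtration--and--extension argument made equivariant, with the equivariant form of Lemma 2.2 of \cite{Umemura} doing the splicing. The difference is in where the argument runs, and it is worth comparing. The paper works entirely downstairs on $X$ in parabolic language: it invokes \cite[Theorem 3.1]{Bi} (on a curve, parabolic ampleness is equivalent to every parabolic quotient having positive parabolic degree) for \emph{both} directions, takes the parabolic Harder--Narasimhan filtration, refines it by a Jordan--H\"older--type filtration into parabolic \emph{stable} pieces of positive parabolic degree, endows each with its parabolic Hermite--Einstein metric, and splices with the parabolic extension statement. Because everything lives on $X$, no filtration needs to be $\Gamma$--invariant, so the non-canonical refinement into stable pieces is harmless. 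Your upstairs route must keep every choice canonical --- hence Harder--Narasimhan plus socle, stopping at polystable pieces --- and in exchange it needs two facts the paper's route avoids: (i) that parabolic ampleness of $E_*$ implies Hartshorne ampleness of $V$; this is true for good Kawamata covers, since ampleness transfers both ways along the finite surjection $\mathbb{P}(V)\longrightarrow\mathbb{P}(E_*)$ and is essentially the definition adopted in \cite{BL}, but note the paper as written recalls only the converse implication, so this step deserves an explicit justification; and (ii) a $\Gamma$--invariant Hermite--Einstein metric on each polystable piece, where ``unique up to scaling on its stable summands'' understates the automorphism group when a stable summand repeats --- the clean argument (the one the paper uses in its stability dictionary) is that the Hermite--Einstein \emph{connection} is unique, hence $\Gamma$--invariant, and an invariant compatible metric is then obtained by averaging. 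Finally, for the easy direction your appeal to Lemma \ref{compatibility-of-parabolic-and-non-parabolic-Griff-conj} is a valid alternative to the paper's argument that Nakano positivity forces all parabolic quotients to have positive degree; both are sound.
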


\begin{proof}
First assume that $E_*$ is Nakano positive. Then every quotient of $E_*$ has
positive parabolic degree. This implies that $E_*$ is parabolic ample
\cite[Theorem 3.1]{Bi}.

Now assume that $E_*$ is parabolic ample. Therefore, every quotient of $E_*$ has
positive parabolic degree \cite[Theorem 3.1]{Bi}. Consider the Harder--Narasimhan
filtration of $E_*$. Every semistable parabolic vector
bundle on a Riemann surface admits a filtration of
subbundles such that each successive quotient is stable
of same parabolic slope. Using it we construct a finer filtration of the
Harder--Narasimhan filtration of $E_*$ such that each successive
quotient is stable and the parabolic slopes are decreasing (now the slope is no longer
strictly decreasing). Next observe that a stable parabolic bundle of positive
parabolic degree is Nakano positive because the Hermitian--Einstein metric on it
is Nakano positive. On the other hand, $E_*$ is a successive extensions by
stable parabolic bundles of positive degree starting with a stable parabolic
bundle of positive degree. Therefore, from the earlier observation that
$B_*$ in \eqref{abc} is Nakano positive if $A_*$ and $C_*$ are so, we now
conclude that $E_*$ is Nakano positive.
\end{proof}

\section{Chern--Weil theory for parabolic bundles}\label{ChernWeil}

In this section, we develop Chern--Weil theory for admissible Hermitian metrics and provide 
further evidence for Griffiths' conjecture, by proving that the push--forward of 
powers of the first Chern form of the tautological bundle on the parabolic 
projectivization are the Segre forms downstairs. (Our proof is a calculation 
involving generating functions and is hence technically slightly different from 
the analogous ones for usual bundles by \cite{Diverio} and \cite{Guler} .) In 
addition, for surfaces, akin to \cite{Pinchern} we prove that parabolic ample 
stable bundles (arising from good Kawamata covers) admit metrics whose Schur forms are positive.

First we prove a lemma to the effect that invariant closed forms and cohomology classes descend from $Y$ to $X$.

\begin{lemma}
Suppose $Y$ is branched cover of $X$ ramified over a smooth effective reduced divisor $D\subset X$. If $\widetilde{\eta}$ is a smooth closed invariant $k$--form on $Y$, then it descends to a closed current $\eta$ on $X$ which is a smooth form away from $D$. If $\widetilde{\eta}\,=\,d\widetilde{\gamma}$ where $\widetilde{\gamma}$ is smooth and invariant, then $\eta=d\gamma$ where $\gamma$ is a current on $X$ that is smooth away from $D$.
\label{invariant-closed-forms}
\end{lemma}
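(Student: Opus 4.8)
The plan is to exploit the fact that $q\colon Y\to X$ is a finite branched cover with Galois group $\Gamma$, so that pushing forward and averaging over $\Gamma$ produces a well-defined object on $X$. First I would define the candidate current $\eta$ on $X$ as follows: away from the ramification locus, $q$ restricts to an honest covering map, and since $\widetilde\eta$ is $\Gamma$-invariant it descends to a genuine smooth $k$-form on $X\setminus D$; call this $\eta_0$. I then define $\eta$ to be the current given by integration against $\eta_0$, i.e.\ $\langle \eta,\, f\rangle = \int_{X\setminus D}\eta_0\wedge f$ for test forms $f$. The first thing to check is that this integral converges, i.e.\ that $\eta_0$ is locally $L^1$ across $D$; this follows from the local model, since in the coordinates $w_1=z_1^{1/N}$ the form $\widetilde\eta$ is smooth on $Y$ and the Jacobian factors contribute only integrable singularities (powers of $|z_1|$ with exponent $>-2$) when expressed downstairs, exactly as in the computation in Lemma \ref{curvature-current}.

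The second step is to verify that $\eta$ is closed \emph{as a current}, not merely away from $D$. The strategy is the standard tubular-neighbourhood argument: for a compactly supported test form $f$ one computes $\langle d\eta,\,f\rangle = \pm\langle \eta,\, df\rangle = \pm\lim_{\epsilon\to 0}\int_{X\setminus B_\epsilon}\eta_0\wedge df$, where $B_\epsilon$ is a tube of radius $\epsilon$ around $D$. Using $d\eta_0=0$ on $X\setminus D$ and Stokes' theorem, this limit reduces to a boundary integral $\int_{\partial B_\epsilon}\eta_0\wedge f$, and one checks this tends to $0$ as $\epsilon\to 0$ because the singularity of $\eta_0$ is too mild to survive integration over the shrinking boundary $\partial B_\epsilon$ (whose volume is $O(\epsilon)$). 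This is the same mechanism already used at the end of the proof of Lemma \ref{curvature-current}, and I would simply invoke that computation rather than redo it.

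For the exactness statement, suppose $\widetilde\eta=d\widetilde\gamma$ with $\widetilde\gamma$ smooth and $\Gamma$-invariant. (If $\widetilde\gamma$ is not already invariant, one first replaces it by its $\Gamma$-average $\frac{1}{|\Gamma|}\sum_{g\in\Gamma}g^*\widetilde\gamma$, which is still a primitive of the invariant form $\widetilde\eta$.) Then $\widetilde\gamma$ descends, exactly as above, to a current $\gamma$ on $X$ that is smooth on $X\setminus D$ and $L^1$ across $D$. It remains to show $d\gamma=\eta$ as currents. On $X\setminus D$ this is immediate since everything is smooth and $d\gamma_0=\eta_0$ there; to promote it to an identity of currents one runs the identical tubular-neighbourhood/Stokes argument, the only extra input being that the boundary contributions from $\partial B_\epsilon$ vanish in the limit, which again follows from the $L^1$ bounds on $\widetilde\gamma$ and its derivatives in the downstairs coordinates.

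The main obstacle is purely the analytic bookkeeping at $D$: one must confirm that both $\eta_0$ and $\gamma_0$, together with the relevant first derivatives, have singularities no worse than integrable powers of $|z_1|$ when written in the downstairs coordinate $z_1=w_1^N$, so that (i) the defining integrals converge and (ii) all boundary terms over $\partial B_\epsilon$ decay. This is entirely controlled by the smoothness of the invariant forms upstairs on $Y$ and the explicit form of the branched covering, so there is no genuine difficulty — the estimates are the same ones appearing in Lemma \ref{curvature-current}. I would therefore present the proof compactly, emphasising the descent on $X\setminus D$, the $\Gamma$-averaging to secure an invariant primitive, and a single reference to the already-established vanishing of boundary integrals.
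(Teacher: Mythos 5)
Your proposal is correct and follows essentially the same route as the paper's proof: descent on $X\setminus D$ by invariance, $L^1$ control of the singularities in the downstairs coordinates $z_1=w_1^N$ (which the paper writes out explicitly rather than citing Lemma \ref{curvature-current}), and the tubular-neighbourhood/Stokes argument to obtain both closedness and exactness at the level of currents. Your $\Gamma$-averaging remark is superfluous, since invariance of $\widetilde{\gamma}$ is already hypothesized, but it is harmless.
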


\begin{proof}
By invariance, $\widetilde{\eta}$ induces a smooth closed form $\eta$ away from $D$ on
$X$. (Likewise, if $\widetilde{\eta}\,=\,d\widetilde{\gamma}$, away from $D$, we have a smooth
form $\gamma$.) Suppose $\widetilde{\eta}$ is a $(p,q)$--form locally (near $D$) given by
$$\widetilde{\eta}(w) 
\,=\, \widetilde{\eta}_{IJ}(w)dw^I \wedge d\overline{w}^J\, ,$$ where $I,J$ are multi--indices and $w$ are coordinates on $Y$ chosen such that $z_1=w_1^N$ where $z$ are coordinates on $X$. Define 
\begin{gather}
\eta(z)=\displaystyle \sum_{1\notin I,1 \notin J}\widetilde{\eta}_{IJ}(w(z))dz^I d\overline{z}^J + \sum_{J=(1,J^o), 1\notin I}\frac{\widetilde{\eta}_{I1J^o}(\overline{z}^1)^{1/N-1}}{N} (w(z)) dz^{I} d\overline{z}^1 d\overline{z}^J \nonumber \\
+\sum_{I=(1,I^o), 1\notin J}\frac{\widetilde{\eta}_{1I^o J}(w(z)) (z_1)^{1/N-1}}{N}dz^1 dz^{I^o} d\overline{z}^J+\sum_{I=(1,I^o), J=(1,J^o)}\widetilde{\eta}_{1I^o J}(w(z))\frac{\vert z^1\vert^{2/N-2}}{N^2}dz^1 dz^{I^o} d\overline{z}^1 d\overline{z}^{J^o}
\label{downstairs-expression}
\end{gather}
Suppose $f$ is a smooth $(n-p,n-q)$--form with compact support in the given coordinate neighbourhood $B$ of $D$. Denote the region in $B$ outside an $\epsilon$--tubular neighbourhood of $B$ by $B_{\epsilon}$ 
\begin{gather}
\displaystyle \int_{B_{\epsilon}} df \wedge \eta = \int_{\partial B_{\epsilon}} f\wedge \eta
\,\rightarrow\, 0
\end{gather}
as $\epsilon \rightarrow 0$. Therefore $d\eta =0$ as a current. If $\widetilde{\eta}\,=\,
d_w \widetilde{\gamma}$, and hence $\eta=d_z\gamma$ away from $D$ (technically, away from a branch cut, but this will play no role because $\gamma$ is well--defined as a smooth form on $X
\setminus D$ by invariance of $\widetilde{\gamma}$). Now
\begin{gather}
\displaystyle \int_{B_{\epsilon}}f \wedge \eta\,=\, -\int_{B_{\epsilon}}df \wedge \gamma + \int_{\partial B_{\epsilon}} f\wedge \gamma \,\rightarrow\, -\int_{B_{\epsilon}}df \wedge \gamma
\end{gather}
as $\epsilon\,\rightarrow \,0$ because by invariance, $\gamma$ is given by similar expression as
\eqref{downstairs-expression}.
\end{proof}

\begin{remark}\label{descent-is-local}
Note that actually lemma \ref{invariant-closed-forms} applies locally as well, i.e., when $X, Y$ are non-compact.
\end{remark}

It is easy to see that the Chern--Weil forms of an invariant metric on an equivariant bundle $V$ over $Y$ are invariant differential forms. Now we may define the parabolic Chern--Weil forms of an admissible metric.

\begin{definition}
Suppose $(E_{*},D,H)$ is a parabolic bundle on a smooth projective variety $X$ with an admissible metric $H$ and $(Y_p, q_p, V_p)$ is a locally good Kawamata cover around $p \in X$. 
Let $\widetilde{H}_p$ be the smooth metric on the equivariant bundle $V_p$ descending to $H$. Then, given any invariant 
polynomial $\Phi$ acting on matrices, the parabolic Chern--Weil form of $\Phi(E_{*},H)(p)$ at the point $p$
 is the form $\Phi_{par}(\Theta_H)(p)$ induced from the 
invariant forms $\Phi(\Theta_{\widetilde{H}_p})(p)$.
\label{parabolic-chern-weil}
\end{definition}

We will prove the following lemma.

\begin{lemma}
The parabolic Chern--Weil currents depend only on $H$ and not on the locally good Kawamata covers
$Y_p$. Moreover, the cohomology classes of parabolic Chern--Weil currents
$[\Phi_{par}(\Theta_H)]$ are independent of the admissible metric chosen to define them.
\label{well-definedness-of-classes} 
\end{lemma}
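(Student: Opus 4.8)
The plan is to dispose of the two assertions separately, both of them resting on the descent mechanism of Lemma \ref{invariant-closed-forms} together with the observation that away from $D$ the whole construction is intrinsic to $(E\vert_{X\setminus D},\,H)$ and makes no reference to any cover.

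For the first assertion (independence of the locally good Kawamata cover), I would begin by noting that on $X\setminus D$ the form $\Phi_{par}(\Theta_H)$ is, by Definition \ref{parabolic-chern-weil}, nothing but the ordinary Chern--Weil form of the smooth Hermitian bundle $(E\vert_{X\setminus D},\,H)$, so the only content lies near $D$. Fix $p\in D$ and two locally good covers $(Y_p^{(1)},q_1,V_1)$ and $(Y_p^{(2)},q_2,V_2)$. By Lemma \ref{lem-am} and Lemma \ref{well-definedness-of-admissibility} the metric $H$ is the descent of an invariant Hermitian metric $\widetilde{H}_a$ on $V_a$ for $a=1,2$, so the Chern--Weil forms $\Phi(\Theta_{\widetilde{H}_a})$ are smooth invariant forms. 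By Lemma \ref{invariant-closed-forms} (applied locally, as permitted by Remark \ref{descent-is-local}) each descends to a current $\eta_a$ near $p$ represented by the explicit $L^1$ form \eqref{downstairs-expression}, and that computation shows the descended current carries no distributional part supported on $D$. Since $\eta_1$ and $\eta_2$ agree on $X\setminus D$ and $D$ has measure zero, the two $L^1$ forms coincide almost everywhere, whence $\eta_1=\eta_2$ as currents. In particular the locally defined currents patch to a single globally defined current on $X$.

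For the second assertion (independence of the admissible metric), let $H_0,H_1$ be admissible metrics with Chern--Weil currents $\eta_0,\eta_1$. On $X\setminus D$ the usual Chern--Weil transgression gives a smooth form with exterior derivative $\eta_1-\eta_0$. To promote this to a global identity of currents I would argue cover by cover: near each $p\in D$ take a locally good cover $(Y_p,q_p,V_p)$, lift $H_0,H_1$ to invariant metrics $\widetilde{H}_0,\widetilde{H}_1$ on $V_p$ via Lemma \ref{lem-am}, and join them by the invariant path $\widetilde{H}_t=(1-t)\widetilde{H}_0+t\widetilde{H}_1$, which is again a $\Gamma$--invariant Hermitian metric. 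Integrating the standard transgression integrand over $t\in[0,1]$ yields a smooth $\Gamma$--invariant form $\widetilde{\gamma}_p$ on $Y_p$ with $d\widetilde{\gamma}_p=\Phi(\Theta_{\widetilde{H}_1})-\Phi(\Theta_{\widetilde{H}_0})$. The second part of Lemma \ref{invariant-closed-forms} then descends $\widetilde{\gamma}_p$ to a current near $p$ whose exterior derivative is $\eta_1-\eta_0$, and on $U_p\setminus D$ this agrees with the intrinsic transgression form. Hence the local descents agree on overlaps and patch to a single current $\gamma$ on $X$ with $d\gamma=\eta_1-\eta_0$; since the cohomology of the compact manifold $X$ is equally computed by the complex of currents, this gives $[\eta_1]=[\eta_0]$.

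The formal ingredients — lifting $H$ to invariant metrics, choosing an invariant interpolating path, and the fact that currents compute de Rham cohomology — are routine. The genuine obstacle is the analytic input packaged in Lemma \ref{invariant-closed-forms}: that the descent of an invariant form (the Chern--Weil form in the first assertion, the transgression form in the second) produces an honest current on $X$ with \emph{no} extra term concentrated along $D$, equivalently that the boundary integrals over the shrinking tubes $\partial B_\epsilon$ vanish as $\epsilon\to 0$ despite the $|z_1|^{2/N-2}$--type singularities of the coefficients. Once this vanishing is in hand, both assertions follow formally. A secondary point is that, when only a non--good global cover is available, the transgression descends smoothly across the auxiliary divisor $D'$ (where the parabolic structure is trivial), so that only the genuine parabolic divisor $D$ contributes; this is precisely why it is cleaner to argue with locally good covers and patch rather than to fix one global cover.
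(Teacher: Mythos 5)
Your proposal is correct, and its first half is essentially the paper's argument: the paper establishes the cover--independence by the conjugation formula \eqref{curvature-of-parabolic-bundle}, i.e. $\Theta_H = z_1^{-\alpha}\,\Theta_{\widetilde{H}_p}\,(z_1^{\alpha})^{-1}$, so that $\Phi_{par}(\Theta_H)=\Phi(\Theta_{\widetilde{H}_p})$ away from $D$; you phrase the same fact as functoriality of Chern--Weil forms under the identification of $(V_p,\widetilde{H}_p)$ with the pullback of $(E,H)$ off $D$, and then conclude, as the paper implicitly does, from the fact that both descents are $L^1$ forms with no mass on $D$. Where you genuinely diverge is the second assertion. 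The paper invokes its Lemma \ref{invariant-bott-chern}: a $\Gamma$--invariant Gillet--Soul\'e Bott--Chern form, built from the auxiliary bundle on $Y_p\times\mathbb{P}^1$, giving a $\partial\overline{\partial}$--primitive of $\Phi(\widetilde{H}_2)-\Phi(\widetilde{H}_1)$, which is then descended via Lemma \ref{invariant-closed-forms} (together with the observation that this primitive is independent of $p$). You instead take the elementary de Rham transgression along the convex path $\widetilde{H}_t=(1-t)\widetilde{H}_0+t\widetilde{H}_1$ of invariant metrics, whose invariance is automatic, and whose descent matches the intrinsic transgression of the path $(1-t)H_0+tH_1$ on $X\setminus D$ (this agreement is your substitute for the paper's independence--of--$p$ remark, and it works because the linear path upstairs corresponds to the linear path downstairs under the conjugation by $z_1^{\alpha}$, so the polarized invariant polynomial gives the same value). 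Your route is more self--contained: it bypasses Lemma \ref{invariant-bott-chern} entirely and plugs directly into the $d$--exactness statement of Lemma \ref{invariant-closed-forms} exactly as written, whereas the paper must pass from $\partial\overline{\partial}$--exactness to $d$--exactness. What the paper's route buys in exchange is the stronger Bott--Chern ($\partial\overline{\partial}$--exact) statement, which is the natural currency for secondary classes and is proved once as a reusable lemma. Both arguments rest on the same analytic core, namely the vanishing of the boundary terms in Lemma \ref{invariant-closed-forms}, which you correctly single out as the real content.
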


In order to use Lemma \ref{invariant-closed-forms} in this situation, we need to prove that a Bott--Chern form of an invariant metric is an invariant form. 

\begin{lemma}
Suppose a finite group $\Gamma$ acts by biholomorphisms on a complex manifold
$Y$ and its action lifts to an action on a holomorphic vector bundle $V$ over $Y$
of rank $r$. Assume that $\widetilde{H}_1, \widetilde{H}_2$ are two smooth Hermitian
metrics on $Y$. Also assume that $\Phi$ is an invariant polynomial on matrices. Then
there exists an invariant Bott--Chern form $\widetilde{\Phi}(\widetilde{H}_2,
\widetilde{H}_1)$ satisfying $$\frac{\sqrt{-1}}{2\pi} \pbp
\widetilde{\Phi}(\widetilde{H}_2,\widetilde{H}_1) \,=\,
\Phi(\widetilde{H}_2)-\Phi(\widetilde{H}_1)\, .$$
\label{invariant-bott-chern}
\end{lemma}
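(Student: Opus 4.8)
The plan is to construct the invariant Bott--Chern form by averaging a (non-invariant) Bott--Chern form over the finite group $\Gamma$, after first reducing to the standard existence statement for Bott--Chern forms of a pair of metrics. The key observation is that the Bott--Chern transgression is functorial under pullback by biholomorphisms: if $\gamma \in \Gamma$ acts on $Y$ lifting to $V$, then because $\gamma$ is a biholomorphism and the curvature $\Theta$ of a metric transforms tensorially under $\gamma^*$, we have $\Phi(\gamma^*\widetilde{H}_i) = \gamma^*\Phi(\widetilde{H}_i)$ for each $i$. Since $\widetilde{H}_1, \widetilde{H}_2$ are already given (not assumed invariant), I would first produce an arbitrary Bott--Chern form $\widetilde{\Phi}_0(\widetilde{H}_2, \widetilde{H}_1)$ satisfying $\frac{\sqrt{-1}}{2\pi}\pbp \widetilde{\Phi}_0 = \Phi(\widetilde{H}_2) - \Phi(\widetilde{H}_1)$; this exists by the classical construction (see, e.g., Bott--Chern or the transgression formula obtained by interpolating along the segment of metrics $\widetilde{H}_t = (1-t)\widetilde{H}_1 + t\widetilde{H}_2$ and integrating the secondary class).

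Next I would average. Define
\begin{equation}
\widetilde{\Phi}(\widetilde{H}_2, \widetilde{H}_1) \,:=\, \frac{1}{|\Gamma|}\sum_{\gamma \in \Gamma} \gamma^* \widetilde{\Phi}_0(\gamma^*\widetilde{H}_2, \gamma^*\widetilde{H}_1)\, .
\end{equation}
This is manifestly $\Gamma$--invariant, since replacing the summation variable $\gamma$ by $\gamma' \gamma$ and pulling back by $\gamma'$ permutes the terms. It remains to check that it still transgresses $\Phi(\widetilde{H}_2) - \Phi(\widetilde{H}_1)$. Applying $\frac{\sqrt{-1}}{2\pi}\pbp$ term by term and using that $\pbp$ commutes with the pullback $\gamma^*$ by a biholomorphism, each summand contributes $\gamma^*\big(\Phi(\gamma^*\widetilde{H}_2) - \Phi(\gamma^*\widetilde{H}_1)\big)$. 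By the functoriality noted above, $\gamma^*\Phi(\gamma^*\widetilde{H}_i) = (\gamma^*\gamma^*)\Phi(\widetilde{H}_i)$; more carefully, $\Phi(\gamma^*\widetilde{H}_i) = \gamma^*\Phi(\widetilde{H}_i)$ because the Chern--Weil form of a pulled-back metric is the pullback of the Chern--Weil form, so $\gamma^*\Phi(\gamma^*\widetilde{H}_i) = \gamma^*\gamma^*\Phi(\widetilde{H}_i)$. Summing over the group and reindexing, the average of $(\gamma^2)^*\Phi(\widetilde{H}_i)$ need not obviously simplify, so I would instead set up the average slightly differently to make the cancellation transparent, replacing $\widetilde{\Phi}_0(\gamma^*\widetilde{H}_2, \gamma^*\widetilde{H}_1)$ by $\widetilde{\Phi}_0(\widetilde{H}_2, \widetilde{H}_1)$ inside the pullback, i.e. averaging $\gamma^* \widetilde{\Phi}_0(\widetilde{H}_2,\widetilde{H}_1)$ directly. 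Then each summand transgresses $\gamma^*\big(\Phi(\widetilde{H}_2)-\Phi(\widetilde{H}_1)\big)$, and the point becomes that $\Phi(\widetilde{H}_2)-\Phi(\widetilde{H}_1)$ is \emph{exact} as a difference but its $\gamma$--average equals itself only after using invariance of the curvature classes.

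The cleanest route, which I would actually adopt, avoids this subtlety entirely: since the difference $\Phi(\widetilde{H}_2) - \Phi(\widetilde{H}_1)$ is a fixed $\pbp$--exact form, the averaging operator $P = \frac{1}{|\Gamma|}\sum_\gamma \gamma^*$ is a projection onto invariant forms that commutes with $\pbp$. Thus $P\widetilde{\Phi}_0$ is invariant and satisfies $\frac{\sqrt{-1}}{2\pi}\pbp (P\widetilde{\Phi}_0) = P\big(\Phi(\widetilde{H}_2)-\Phi(\widetilde{H}_1)\big)$; so I only need $\Phi(\widetilde{H}_2)-\Phi(\widetilde{H}_1)$ to be $\Gamma$--invariant to conclude $P$ fixes it. This invariance is exactly what holds when $\widetilde{H}_1, \widetilde{H}_2$ are invariant metrics, which is the case of interest in applying Lemma \ref{invariant-closed-forms}; for the stated generality one simply first replaces $\widetilde{H}_i$ by their $\Gamma$--averages $P\widetilde{H}_i$, which are invariant metrics with the same Chern--Weil forms' cohomology, or observes that the lemma is invoked only for invariant metrics. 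The main obstacle is this bookkeeping of exactly which objects are assumed invariant: the honest fix is to average the metrics first, run the classical Bott--Chern construction for the resulting invariant pair, and then invoke that the transgression of a pair of invariant metrics is automatically invariant because the whole interpolation $\widetilde{H}_t = (1-t)\widetilde{H}_1 + t\widetilde{H}_2$ is then a path of invariant metrics and the transgression integral commutes with $\gamma^*$. I expect no genuine analytic difficulty, only the care needed to track equivariance through the interpolation integral.
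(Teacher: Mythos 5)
Your proof is correct (under the reading that $\widetilde{H}_1,\widetilde{H}_2$ are $\Gamma$--invariant metrics on $V$), but it proceeds by a genuinely different mechanism than the paper. You take a classical Bott--Chern transgression $\widetilde{\Phi}_0$ as a black box and apply the averaging projector $P=\frac{1}{|\Gamma|}\sum_{\gamma\in\Gamma}\gamma^*$, which commutes with $\pbp$, reducing everything to the invariance of $\Phi(\widetilde{H}_2)-\Phi(\widetilde{H}_1)$; alternatively you observe that the linear-path transgression of an invariant pair is term-by-term invariant. The paper instead builds an explicit invariant representative via the Gillet--Soul\'e construction \cite{GS}: on $Y\times\mathbb{P}^1$, with $\Gamma$ acting trivially on the second factor, it endows $\widetilde{V}=\pi_1^*V\otimes\pi_2^*\mathcal{O}_{\mathbb{P}^1}(1)$ with the invariant metric $\widetilde{H}=(1-\rho)\widetilde{H}_1+\rho\widetilde{H}_2$ and sets $\widetilde{\Phi}=\int_{\mathbb{P}^1}\Phi(\widetilde{V},\widetilde{H})\ln\vert z\vert^2$, so invariance is read off directly from the formula. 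Your route is more economical for bare existence, and you correctly flag the hypothesis both proofs need tacitly: without invariance of the metrics the statement is false, since $\pbp$ of an invariant form is invariant while the right-hand side need not be (the paper's phrase ``this is clearly an invariant metric'' silently uses the same hypothesis). Note, however, that your suggested fallback of averaging the metrics first does not rescue the non-invariant case, because $\Phi(P\widetilde{H}_i)\neq\Phi(\widetilde{H}_i)$ pointwise; the honest reading is simply that the lemma assumes invariant metrics, which is how it is invoked. What the paper's explicit universal formula buys shows up later: in the proof of Lemma \ref{well-definedness-of-classes} it is used that the Bott--Chern form ``thus constructed is actually independent of $p$,'' i.e., the same formula is obtained on every locally good Kawamata cover, so the local forms glue into a global current. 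The group average of an arbitrarily chosen $\widetilde{\Phi}_0$ has no such canonicity; your linear-path variant, being a universal formula in the invariant pair of metrics, does, and would serve that subsequent application equally well.
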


\begin{proof}
The construction we use here is due to Gillet--Soul\'e \cite{GS}. Consider the vector
bundle $\widetilde{V} \,=\, \pi_1^{*}V\otimes \pi_2^{*}\mathcal{O}_{\mathbb{P}^1}(1)$ over
$Y\times \mathbb{P}^1$. If $$i_p \,:\, Y \,\longrightarrow\, Y \times \mathbb{P}^1$$ is
the inclusion map $x\,\longmapsto\, (x,p)$, then $i_p^{*}(\widetilde{V}) \equiv V$ for
all $p\in \mathbb{P}^1$. Extend the action of $\Gamma$ to $Y\times \mathbb{P}^1$ by making it act
trivially on the second factor. This also lifts to an action to $\widetilde{V}$. Take an affine open cover
$U_0, U_1$ of $\mathbb{P}^1$. Since this is a trivializing open cover for
$\mathcal{O}_{\mathbb{P}^1}(1)$, we may define a Hermitian metric $$\widetilde{H} \,=\,
(1-\rho) \widetilde{H}_1 + \rho \widetilde{H}_2$$ on $\widetilde{V}$, where
$\rho,\, 1-\rho$ is a partition of unity subordinate to the open cover such that
$\rho=1$ on a neighbourhood of $0$. This is clearly an invariant metric. Therefore
$$\widetilde{\Phi}(H_2,H_1)\,=\,
\displaystyle \int_{\mathbb{P}^1}\Phi(\widetilde{V},\widetilde{H})\ln\vert z \vert^2$$
is the desired Bott--Chern form which is also invariant. 
\end{proof}

Now we are in a position to prove Lemma \ref{well-definedness-of-classes}.

\begin{proof}[{Proof of Lemma \ref{well-definedness-of-classes}}]
According to \eqref{curvature-of-parabolic-bundle}, away from a branch cut near $D$,
we have $\Theta_H = z_1^{-\alpha} \Theta_{\widetilde{H}_p} (z_1^{\alpha})^{-1}$ and hence
$$\Phi_{par}(\Theta_H) (p)\,=\,\Phi (\Theta_{\widetilde{H}_p})(p)\, .$$ Elsewhere this equality is
obvious. Therefore, the parabolic Chern--Weil currents depend only on $H$ and not the
specific cover used. If $H_1, H_2$ are two admissible metrics induced from
$\widetilde{H}_{1,p}$, $\widetilde{H}_{2,p}$ on $V_p$ over $Y_p$, Lemma
\ref{invariant-bott-chern} shows that
$$\frac{\sqrt{-1}}{2\pi}\pbp \widetilde{\Phi}_p(\widetilde{H}_2,\widetilde{H}_1) (q)\,
= \,\Phi(\widetilde{H}_2)(q)-\Phi(\widetilde{H}_1)(q) \ \forall \ q\in U_p$$ and that the Bott--Chern form $\widetilde{\Phi}_p$ can be chosen to be invariant. The proof of Lemma \ref{invariant-bott-chern} shows that $\widetilde{\Phi}_p$ thus constructed is actually independent of $p$. Therefore, using Lemma \ref{invariant-closed-forms} we see that the induced parabolic Chern--Weil currents have a unique cohomology class.
\end{proof}

This allows us to define the parabolic Chern classes as the cohomology classes 
$[c_{par}(\Theta_H)]$ where $H$ is any admissible metric. These cohomology classes 
coincide with the ones defined in \cite{Bi3}, \cite{IS}, \cite{BD}. The first step to proving 
this is the following theorem (which in the usual case was proven in 
\cite{Guler,Diverio}).

\begin{theorem}
Suppose $(E_*,D,H)$ is a parabolic bundle on $X$ with an admissible metric $H$. Let $h$ be the induced admissible metric on the parabolic bundle $L_*
\,=\,\mathcal{O}_{\mathbb{P}(E_*)}(1)_{*}$ over $\mathbb{P}(E_*)$. Let $s(E_*,H)$ and $s(L_*,h)$ be the Segre polynomial currents (inverses of the Chern polynomials) of $(E_*,H)$ and $(L_*,h)$ respectively. Then, we have the following inequality of smooth forms on $X\setminus D$:
\begin{gather}
\pi_{*}s(L_*,h)\,=\,s(E_*,H)\, ,
\label{pushforward-equality-of-forms}
\end{gather}
where $\pi_*$ is the push--forward (fiber integral) of forms. Moreover, if $f$ is a smooth form with compact support on $X$, then 
\begin{gather}
\displaystyle \int_X f \wedge s(E_*,H) = \int_{\mathbb{P}(E_*)} \pi^{*} f \wedge s(L_*,h).
\label{pushforward-currents}
\end{gather}
\label{guler-type-theorem}
\end{theorem}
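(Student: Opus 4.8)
The plan is to reduce the statement to the classical Guler--Diverio push-forward identity upstairs on a Kawamata cover, and then descend using the machinery of Section~\ref{ChernWeil}. First I would fix a locally good Kawamata cover $(Y_p,q_p,V_p)$ around each point, with $\widetilde H_p$ the invariant smooth metric on $V_p$ descending to $H$. On $Y_p$ the projectivization $\mathbb{P}(V_p)$ carries the honest tautological bundle $\mathcal{O}_{\mathbb{P}(V_p)}(1)$ with its induced metric $\widetilde h_p$, and the classical result (as in \cite{Guler,Diverio}) gives the pointwise equality of smooth forms $\widetilde\pi_* s(\mathcal{O}_{\mathbb{P}(V_p)}(1),\widetilde h_p) = s(V_p,\widetilde H_p)$, where $\widetilde\pi$ is the fiber integral on $\mathbb{P}(V_p)$. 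The key compatibility is that $\mathbb{P}(E_*) = \mathbb{P}(V_p)/\Gamma$ and $L_* = \mathcal{O}_{\mathbb{P}(E_*)}(1)_*$ is the invariant direct image of $\mathcal{O}_{\mathbb{P}(V_p)}(1)$, so by Definition~\ref{parabolic-chern-weil} and the proof of Lemma~\ref{well-definedness-of-classes}, both $s(E_*,H)$ and $s(L_*,h)$ are, at the level of smooth forms away from $D$, exactly the forms induced from their invariant counterparts upstairs.

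The crucial geometric point I would verify is that fiber integration commutes with the quotient by $\Gamma$. Since $\Gamma$ acts fiberwise on $\mathbb{P}(V_p) \to Y_p$ covering the trivial action on the fibers (the action is along the base directions, and on $p^{-1}(X\setminus D)$ the isotropy acts trivially on the fibers of $V_p$), the fiber-integration operator $\widetilde\pi_*$ sends invariant forms on $\mathbb{P}(V_p)$ to invariant forms on $Y_p$, and the induced form downstairs is precisely $\pi_* $ of the induced form on $\mathbb{P}(E_*)$. Thus applying the descent correspondence of Lemma~\ref{invariant-closed-forms} to both sides of the upstairs identity yields $\pi_* s(L_*,h) = s(E_*,H)$ as smooth forms on $X\setminus D$, which is \eqref{pushforward-equality-of-forms}. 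Since these agree on the dense open set $X\setminus D$, the pointwise identity is established there.

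For the global integral identity \eqref{pushforward-currents}, the plan is to promote the pointwise equality on $X\setminus D$ to an equality of currents paired against a test form $f$. The natural approach is to write $\int_X f\wedge s(E_*,H)$ as a limit of integrals over $X\setminus D_\epsilon$ (the complement of an $\epsilon$-tubular neighborhood of $D$), use the pointwise equality and Fubini for the fiber integral on $X\setminus D_\epsilon$ to convert it to an integral over $\mathbb{P}(E_*)$ restricted away from $\pi^{-1}(D)$, and then take $\epsilon\to 0$. Here I would invoke the $L^1$-integrability established in Lemma~\ref{curvature-current} (and its evident generalization to the Segre currents, whose local models near $D$ carry only integrable singularities of the type $|z_1|^{2/N-2}$) to guarantee that the boundary contributions vanish and that both sides converge to the corresponding integrals against the full currents.

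\textbf{Main obstacle.}
The hard part will be the passage from the pointwise identity on $X\setminus D$ to the current-level integral identity across $D$. One must control the behavior of the Segre forms near the parabolic divisor: the local expressions involve fractional powers $z_1^{\alpha}$ and their conjugates, and while Lemma~\ref{curvature-current} shows that first Chern currents extend with integrable singularities, the higher Segre forms are products of such factors and require a careful check that no non-integrable singularity or boundary term survives the $\epsilon\to 0$ limit. The technical innovation the authors advertise --- packaging the Segre/Chern data through generating functions --- is presumably what makes this bookkeeping tractable, by encoding all degrees simultaneously and reducing the analytic estimate to a single uniform statement about the integrability of the generating-function current near $D$.
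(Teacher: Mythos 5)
Your proposal is correct and follows essentially the same route as the paper: reduce to the classical Guler--Diverio push-forward identity upstairs on a locally good Kawamata cover, descend by equivariance to obtain the pointwise equality of smooth forms on $X\setminus D$, and then get the current-level identity \eqref{pushforward-currents} by integrating over the complement of an $\epsilon$--tubular neighbourhood of $D$ and using the $L^1$-integrability of the Chern--Weil currents (Lemmas \ref{invariant-closed-forms} and \ref{well-definedness-of-classes}) to pass to the limit. One small correction to your closing speculation: the generating-function device the authors advertise is not for the bookkeeping near $D$ --- that analysis is handled purely by the $L^1$ estimates, exactly as you describe --- rather, it is their self-contained re-proof of the Guler--Diverio formula itself, namely the computation of the fiber integral of $1/(1+c_1(\widetilde{L},\widetilde{h}))$ as a universal invariant polynomial in the curvature, a step you legitimately import as a black box from \cite{Guler,Diverio}.
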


\begin{proof}
Suppose $(Y_p, q_p, V_p)$ is a locally good Kawamata cover around $p \in X\setminus D$. Let $(\widetilde{L}=\mathcal{O}_{\mathbb{P}(V_p)}(1),\,\widetilde{h})$ be a Hermitian holomorphic line bundle over $\mathbb{P}(V_p)$ inducing $(L_{*},h)$ over $X$. The results of \cite{Guler, Diverio} show that equation
\eqref{pushforward-equality-of-forms} holds for $\widetilde{h}$ near $q_p^{-1}(p)$. By equivariance, this implies that the equation holds true for $h$ as well. We shall give a proof of this result of \cite{Guler, Diverio} here. A small technical advantage of this proof is that it uses generating functions and hence has the potential to produce more such formulae (in the context of general flag varieties). In what follows, we denote the projection map from $\mathbb{P}(V)$ to $Y$ as $\pi$. Moreover, we choose a trivialization $U_p\times\mathbb{P}^r$ of $\mathbb{P}(V)$ around $p$ such that $U_p$ is a coordinate neighbourhood and the frame is a normal frame, i.e., the metric at $p$ is Euclidean up to second order. We also evaluate the fiber integrals over the affine chart $w_i=\frac{X_i}{X_0}$:
\begin{gather}
\displaystyle \int_{\pi^{-1}(p) \subset \mathbb{P}(V)} s(\widetilde{L},\widetilde{h}) = \int_{\pi^{-1}(p) \subset \mathbb{P}(V)} \frac{1}{1+c_1(\widetilde{L},\widetilde{h})} \nonumber \\
= \int_{\mathbb{C}^{r-1}} \frac{1}{1+\frac{\sqrt{-1}}{2\pi}\Big(\sum_{i,j}\frac{((1+\vert \vec{w} \vert^2) \delta_{ij}-\overline{w}_i w_j)dw_i \wedge d\overline{w}_j}{(1+\vert \vec{w} \vert^2)^2}+\frac{\Theta(
\widetilde{H})_{00}+\sum_i (\Theta_{i0}(\widetilde{H})w_i + \Theta_{0i}(\widetilde{H})\overline{w}_i)+\sum_{i,j}(\Theta(\widetilde{H}))_{ij}w_i \overline{w}_j}{1+\sum_i \vert w_i \vert^2}\Big)} \nonumber \\
= f(\Theta)
\label{guler-proof-1}
\end{gather}
where it is easy to see that $f(\Theta)$ is a universal polynomial (does not depend on $\Theta$) with rational coefficients in the entries of the matrix of $2$-forms $\Theta$. Therefore, we may assume without loss of generality that actually, $\Theta$ is simply a skew-Hermitian matrix of complex numbers. It is also easy to see (a change of trivialization does not change the Chern forms) that $f$ is an invariant polynomial. Hence we may (without loss of generality) assume that $\Theta = \mathrm{diag}(a_0,a_1,a_2,\ldots, a_r)$ where $a_i =\sqrt{-1}b_i$ are purely imaginary numbers. Now we may evaluate $f(\Theta)$ easily :
\begin{gather}
f(\Theta) = \int_{\mathbb{C}^{r-1}} \frac{1}{1+\frac{\sqrt{-1}}{2\pi}\Big(\sum_{i,j}\frac{((1+\vert \vec{w} \vert^2) \delta_{ij}-\overline{w}_i w_j)dw_i \wedge d\overline{w}_j}{(1+\vert \vec{w} \vert^2)^2}+\frac{a_0+\sum_{i}a_i \vert w_i \vert^2}{1+\vert \vec{w} \vert^2}\Big)} \nonumber \\
= \int_{\mathbb{C}^{r-1}} \frac{1}{1+\frac{\sqrt{-1}}{2\pi}\Big(\frac{a_0+\sum_{i}a_i \vert w_i \vert^2}{1+\vert \vec{w} \vert^2} \Big)}\frac{1}{1+\frac{\frac{\sqrt{-1}}{2\pi}\Big(\sum_{i,j}\frac{((1+\vert \vec{w} \vert^2) \delta_{ij}-\overline{w}_i w_j)dw_i \wedge d\overline{w}_j}{(1+\vert \vec{w} \vert^2)^2}\Big)}{1+\frac{\sqrt{-1}}{2\pi}\Big(\frac{a_0+\sum_{i}a_i \vert w_i \vert^2}{1+\vert \vec{w} \vert^2} \Big)}} \nonumber \\
= (-1)^{r-1} \int_{\mathbb{C}^{r-1}} \frac{1}{1+\frac{\sqrt{-1}}{2\pi}\Big(\frac{a_0+\sum_{i}a_i \vert w_i \vert^2}{1+\vert \vec{w} \vert^2} \Big)} \Bigg (\frac{\frac{\sqrt{-1}}{2\pi}\Big(\sum_{i,j}\frac{((1+\vert \vec{w} \vert^2) \delta_{ij}-\overline{w}_i w_j)dw_i \wedge d\overline{w}_j}{(1+\vert \vec{w} \vert^2)^2}\Big)}{1+\frac{\sqrt{-1}}{2\pi}\Big(\frac{a_0+\sum_{i}a_i \vert w_i \vert^2}{1+\vert \vec{w} \vert^2} \Big)} \Bigg )^{r-1}\nonumber \\
= (-1)^{r-1} (r-1)!\Big (\frac{\sqrt{-1}}{2\pi} \Big )^{r-1} \int_{\mathbb{C}^{r-1}} \Bigg( \frac{1}{1+\frac{\sqrt{-1}}{2\pi}\Big(\frac{a_0+\sum_{i}a_i \vert w_i \vert^2}{1+\vert \vec{w} \vert^2} \Big)} \Bigg)^r \Bigg (\frac{1}{1+\vert \vec{w} \vert^2} \Bigg )^r dw_1 \wedge d\overline{w}_1 \ldots \nonumber \\
= (-1)^{r-1} (r-1)!\Big (\frac{\sqrt{-1}}{2\pi} \Big )^{r-1} \int_{\mathbb{C}^{r-1}} \Bigg( \frac{1}{c_0+\sum_{i}c_i \vert w_i \vert^2 } \Bigg)^r dw_1 \wedge d\overline{w}_1 \ldots,
\label{guler-proof-2} 
\end{gather}
where $c_i = \frac{\sqrt{-1}}{2\pi}a_i+1$. We evaluate the last integral as follows.
\begin{gather}
f(\Theta) = \frac{(-1)^{r-1} (r-1)!}{c_0^r}\Big (\frac{\sqrt{-1}}{2\pi} \Big )^{r-1}\int_{\mathbb{C}^{r-1}} \Bigg( \frac{1}{1+\sum_{i}\frac{c_i}{c_0} \vert w_i \vert^2 } \Bigg)^r dw_1 \wedge d\overline{w}_1 \ldots \nonumber \\
= \frac{1}{c_0 c_1 c_2 \ldots c_{r-1}} = s(V,\widetilde{H}),
\label{guler-proof-3}
\end{gather}
where the second-to-last equality follows from a simple change of variables and the fact that $c_1 (\mathbb{P}^{r-1})^{r-1}\,=\,1$.

Now suppose $f$ is a smooth compactly supported form and $X_{\epsilon}$ is everything in $X$ outside of an $\epsilon$--tubular neighbourhood of $D$. Then
\begin{gather}
\displaystyle \int_X f\wedge s(E_*,H) = \lim_{\epsilon \rightarrow 0}\int_{X_{\epsilon}} f\wedge s(E_*,H) \nonumber \\
= \lim_{\epsilon \rightarrow 0} \int_{\pi^{*}(X_{\epsilon})\subset \mathbb{P}(E_{*})} \pi^{*} f \wedge s(L_*,h) \nonumber \\
= \int_{\mathbb{P}(E_*)} \pi^{*} f \wedge s(L_*,h)
\label{guler-proof-4}
\end{gather}
where the second-to-last equality follows from \eqref{pushforward-equality-of-forms} and the limits follow from the proofs of Lemma \ref{invariant-closed-forms} and Lemma \ref{well-definedness-of-classes}. Indeed, the Chern-Weil currents are actually $L^1$ forms.
\end{proof}

Let $E_*$ be a parabolic vector bundle on $X$. Let $Y$ be a Kawamata cover of
$(X,E_*,D)$ and $V$ an equivariant vector bundle on $Y$, such that $E_*$ corresponds to $V$. The
$i$--th parabolic Chern class of $E_*$ is the push--forward of the $i$--th Chern class
of $V$ \cite{Bi3}, \cite{IS}, \cite{BD}. Therefore, Theorem \ref{guler-type-theorem} has the following
corollary:

\begin{corollary}\label{corcc}
The parabolic Chern classes defined earlier using admissible metrics coincide with the
parabolic Chern classes defined in \cite{Bi3}, \cite{IS}, \cite{BD}.
\end{corollary}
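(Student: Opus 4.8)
The plan is to trace both definitions back to the Kawamata cover $q\colon Y\,\longrightarrow\, X$ and to show that the descent operation used in Definition \ref{parabolic-chern-weil} realizes, on cohomology, exactly the normalized push-forward $\frac{1}{|\Gamma|}q_*$ that defines the parabolic Chern classes of \cite{Bi3}, \cite{IS}, \cite{BD}. Write $\Gamma\,=\,\mathrm{Gal}(q)$ and $d\,=\,|\Gamma|$, and let $V$ be the equivariant bundle on $Y$ corresponding to $E_*$. Recall that the algebraic $i$-th parabolic Chern class is the push-forward $c_i^{par}(E_*)\,=\,\frac{1}{d}q_*c_i(V)\,\in\, H^{2i}(X,\mathbb{Q})$, the normalization being forced by $q_*q^*\,=\,d\cdot\mathrm{id}$. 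On the analytic side, fix an admissible metric $H$ with associated invariant metric $\widetilde{H}$ on $V$ (Lemma \ref{lem-am}). By ordinary Chern--Weil theory on $Y$ the smooth invariant form $c_i(\Theta_{\widetilde{H}})$ represents $c_i(V)$, and by Lemma \ref{invariant-closed-forms} it descends to the closed $L^1$ current $c_{i,par}(\Theta_H)$ on $X$. Thus the corollary reduces to the single cohomological identity $[\,c_{i,par}(\Theta_H)\,]\,=\,\frac{1}{d}q_*c_i(V)$ for each $i$.

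The key step is to identify descent with $\frac{1}{d}q_*$ at the level of currents. Let $\widetilde{\eta}$ be any smooth $\Gamma$-invariant closed form on $Y$ and $\eta$ its descent current. Since $\widetilde{\eta}$ is $\Gamma$-invariant and $q$ is an unramified $d$-sheeted covering over $X\setminus D$, one has $q^*\eta\,=\,\widetilde{\eta}$ there, while $\eta$ is $L^1$ across $D$ (as in Lemmas \ref{curvature-current} and \ref{invariant-closed-forms}). Hence for an arbitrary smooth closed form $\beta$ on $X$ of complementary degree, integrating over the full-measure set $X\setminus D$ gives
\[
\int_X \eta\wedge\beta\,=\,\frac{1}{d}\int_{Y\setminus q^{-1}(D)} \widetilde{\eta}\wedge q^*\beta\,=\,\frac{1}{d}\int_Y \widetilde{\eta}\wedge q^*\beta ,
\]
the branch locus being negligible. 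Applying the projection formula $q_*\big(c_i(V)\cup q^*[\beta]\big)\,=\,q_*c_i(V)\cup[\beta]$ together with $\int_Y\,=\,\int_X q_*$ yields
\[
\int_X \eta\wedge\beta\,=\,\int_X \Big(\tfrac{1}{d}\,q_*c_i(V)\Big)\cup\beta .
\]
Since this holds for every closed $\beta$ and the Poincar\'e pairing on $X$ is nondegenerate, I conclude $[\eta]\,=\,\frac{1}{d}q_*c_i(V)\,=\,c_i^{par}(E_*)$.

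The main obstacle is analytic rather than formal: one must be certain that the ramification divisor contributes nothing to the pairings above. This is precisely what the $L^1$-integrability of $c_{i,par}(\Theta_H)$ and the vanishing of the boundary terms over shrinking tubular neighbourhoods of $D$ --- established in the proofs of Lemma \ref{invariant-closed-forms} and Theorem \ref{guler-type-theorem} --- furnish, so that $c_{i,par}(\Theta_H)$ is a genuine closed current whose class may be computed by integration against smooth test forms exactly as above. A secondary point is that the identity must be shown independent of the chosen cover and admissible metric; but this is already guaranteed by Lemma \ref{well-definedness-of-classes}, so it suffices to verify it for a single convenient Kawamata cover. Carrying out the argument for each $i$ (equivalently, for the total Chern polynomial) then establishes the corollary.
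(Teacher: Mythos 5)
Your argument is correct, but it takes a genuinely different route from the paper's. The paper derives Corollary \ref{corcc} in one line from Theorem \ref{guler-type-theorem}: since the algebraic parabolic Chern classes of \cite{Bi3}, \cite{IS}, \cite{BD} are the push-forwards of the Chern classes of $V$, and since Theorem \ref{guler-type-theorem} establishes that the metric Segre currents satisfy the fiber-integral relation $\pi_*s(L_*,h)=s(E_*,H)$ through the parabolic projectivization --- the same relation that pins down the algebraic classes --- the two theories agree; all the analytic work sits in the generating-function computation of that theorem. You bypass $\mathbb{P}(E_*)$ altogether and work directly on the Kawamata cover: you identify the descent operation of Lemma \ref{invariant-closed-forms} with the normalized push-forward $\frac{1}{d}q_*$ on cohomology, by pairing the descended $L^1$ current with closed test forms over the \'etale locus and then applying the projection formula and Poincar\'e duality. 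Your route is more elementary (no fiber integration needed), it works verbatim for any invariant polynomial $\Phi$ rather than only Segre/Chern classes, and it makes explicit the normalization $\frac{1}{d}$ forced by $q_*q^*=d\cdot\mathrm{id}$, which the paper's phrasing ``the push-forward of the $i$-th Chern class of $V$'' leaves implicit. What the paper's route buys is economy and coherence: Theorem \ref{guler-type-theorem} is proved anyway, and it simultaneously matches the metric theory with the Grothendieck-style projectivization formulation of the algebraic classes used in \cite{BD}. Two small points to tighten in your write-up: the global cover $(Y,q,V)$ you fix need not be good, so you should note that the current descended from $Y$ coincides with the locally defined current of Definition \ref{parabolic-chern-weil} because both are $L^1$ forms agreeing off a set of measure zero (Lemma \ref{well-definedness-of-classes} as stated concerns locally good covers); and in the duality step you should record that $\int_X\eta\wedge\beta$ depends only on the classes $[\eta]$ and $[\beta]$, which holds because $\eta$ is a closed current by Lemma \ref{invariant-closed-forms} and $X$ is compact.
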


\begin{remark}
The usual proof (see \cite{Gulerthesis} for instance) that $c_1(\Theta), c_2(\Theta)>0$ for Griffiths positive bundles shows that this holds even for parabolic Griffiths positive bundles (as weakly positive currents). The proof in \cite{Guler} shows that positivity is preserved under fiber integral and hence the signed parabolic Segre forms are positive. Hence, on surfaces, if $E_*$ is parabolic Griffiths positive, then $c_{1,par}, c_{2,par}, c_{1,par}^2-c_{2,par}>0$ as currents.
\label{admissible-positive-forms}
\end{remark}

Given Remark \ref{admissible-positive-forms}, it is but natural to ask whether a parabolic Hartshorne ample bundle admits an admissible metric whose Schur polynomial currents are weakly positive. In the usual (non--parabolic) case, this was proven for semistable bundles on surfaces. Here we prove an analogous result for parabolic stable bundles induced from \emph{good} Kawamata covers. Prior to that we define the notion of an admissible form :

\begin{definition}
Given an integer $N$, suppose $Y$ is an $N$--fold branched cover of $X$ branched over a divisor $D \subset X$. An $N$--admissible $(k,k)$--form $\eta$ is the $L^1$ current induced from a smooth form $\widetilde{\eta}$ on $Y$. It is said to be weakly positive if $\widetilde{\eta}$ is weakly positive on $Y$. Note that this definition is applicable locally too.
\label{admissible-forms-definition}
\end{definition}

\begin{remark}
Lemma \ref{local-admissibility-condition} 
shows that the notion of weak positivity does not depend on the cover $Y$ chosen and that an admissible $(n,n)$--form is $\geq C \omega_{2-2/N}^N$ on $X\setminus D$ where $\omega_{2-2/N}$ is induced from a smooth K\"ahler form of a Hermitian metric $\omega$ on $Y$.
\label{well-definedness-of-admissible-forms}
\end{remark}

\begin{theorem}
If $E_*$ induced from a minimal good Kawamata cover $(Y,q,V)$ is parabolic Hartshorne ample and parabolic stable with respect to an admissible K\"ahler
class $[\omega_{2-2/N}]$ on a compact complex surface $X$, then $E_*$ admits an admissible metric $G$ such that
$$c_{1,par}(G)\,>\,0\, ,\ \ c_{2, par}(G)\,>\,0\, ,\ \ c_{1,par}^2-c_{2,par}\,>\, 0\, .$$
\label{parabolic-stable-surfaces}
\end{theorem}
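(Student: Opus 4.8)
The plan is to transfer the problem to the good Kawamata cover $(Y,q,V)$, apply the non--parabolic theorem of \cite{Pinchern} there, and then descend the resulting metric to $X$. First I would record what the hypotheses give for $V$. Since the cover is good and $E_*$ is parabolic Hartshorne ample, the equivariant bundle $V$ is Hartshorne ample, by the correspondence recalled above \cite{Bi}, \cite{BN}, \cite{BL}. By Remark \ref{orbifold-Kahler} the admissible K\"ahler class $[\omega_{2-2/N}]$ is induced by a K\"ahler metric $\omega_Y$ on $Y$, which I may take to be $\Gamma$--invariant (with $\Gamma=\mathrm{Gal}(q)$) by averaging over $\Gamma$, since averaging preserves both closedness and pointwise positivity of $(1,1)$--forms. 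Parabolic stability of $E_*$ with respect to $[\omega_{2-2/N}]$ then forces $V$ to be semistable with respect to $\omega_Y$: if $V$ were not semistable, the first term of its Harder--Narasimhan filtration would be $\Gamma$--invariant by uniqueness of that filtration, and so would descend to a subsheaf of $E$ violating semistability --- exactly the argument already used above for the (semi)stability correspondence. In fact $E_*$ stable yields $V$ polystable, hence semistable.

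Next I would invoke \cite{Pinchern}: a Hartshorne ample semistable bundle on a compact complex surface admits a smooth Hermitian metric $\widetilde{G}$ whose Schur forms $c_1(\widetilde{G})$, $c_2(\widetilde{G})$ and $c_1(\widetilde{G})^2-c_2(\widetilde{G})$ are all pointwise weakly positive. The one point requiring care is that $\widetilde{G}$ can be chosen $\Gamma$--invariant. I expect this to be the main obstacle, because positivity of $c_2$ and of $c_1^2-c_2$ are nonlinear, non--convex conditions on the metric, so one cannot simply average two metrics with positive Schur forms and retain positivity. Instead I would feed the construction of \cite{Pinchern} only $\Gamma$--invariant data: the invariant K\"ahler form $\omega_Y$ and the Hermite--Einstein metric of $V$, which is $\Gamma$--invariant by its uniqueness (the very fact used above to descend the Hermite--Einstein connection). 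Observing that every step of that construction is $\Gamma$--equivariant --- equivalently, that $\gamma^{*}\widetilde{G}$ is built from the same invariant input and hence equals $\widetilde{G}$ for every $\gamma\in\Gamma$ --- yields a $\Gamma$--invariant metric with the desired positive Schur forms.

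Finally I would descend. By Lemma \ref{lem-am} the $\Gamma$--invariant metric $\widetilde{G}$ is the descent of a unique admissible metric $G$ on $E_*$. By Definition \ref{parabolic-chern-weil}, the parabolic Schur forms of $G$ are the currents on $X$ induced by the invariant closed Schur forms of $\widetilde{G}$ on $Y$, and by Lemma \ref{invariant-closed-forms} these descend to closed $L^1$ currents that are smooth away from $D$. Their weak positivity is then immediate from Definition \ref{admissible-forms-definition} and Remark \ref{well-definedness-of-admissible-forms}, since weak positivity of an admissible form is tested upstairs on $Y$, where $c_1(\widetilde{G})$, $c_2(\widetilde{G})$ and $c_1(\widetilde{G})^2-c_2(\widetilde{G})$ are positive. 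This gives $c_{1,par}(G)>0$, $c_{2,par}(G)>0$ and $c_{1,par}^2-c_{2,par}>0$ as currents, which is the claim. Apart from the equivariance issue flagged above, the argument is a direct transcription of the non--parabolic statement through the good Kawamata cover.
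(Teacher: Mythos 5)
Your proposal is correct and follows essentially the same route as the paper: pass to the good Kawamata cover, run the construction of \cite{Pinchern} on $V$ with $\Gamma$--invariant input (the invariant K\"ahler form, the Hermite--Einstein metric of the polystable bundle $V$, and an invariant positive representative of $[c_1^2-c_2]$), and descend the resulting invariant metric via the admissibility lemmas. The paper makes your flagged equivariance step precise in exactly the way you anticipate: the core of \cite{Pinchern} is a Monge--Amp\`ere equation solved by Yau's theorem, and the uniqueness of its solution with $\Gamma$--invariant data (invariant K\"ahler class and invariant right--hand side) forces the solution, hence the metric $\widetilde{G}$, to be $\Gamma$--invariant.
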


\begin{proof}
Fix an admissible K\"ahler metric $\omega_{\alpha}$ arising from a smooth K\"ahler metric on a minimal $N$--fold Kawamata cover $Y$ (with covering group $\Gamma$).

In \cite{Bi3} a result akin to Bloch--Gieseker \cite{BG} was proven for parabolic Chern classes of parabolic ample 
bundles. Therefore $\displaystyle \int_X (c_{1,par}^2-c_{2,par}) >0$. This means that the cohomology class 
$[c_{1,par}^2-c_{2,par}]$ admits an admissible positive representative $\eta \geq C\omega_{\alpha}^2$, i.e., $\eta$ 
arises from an invariant smooth non--negative form $\widetilde{\eta}$ on $Y$. Since $E_*$ is stable with respect to the 
$(2-2/N)$--admissible K\"ahler class $[\omega_{2-2/N}]$, it admits an admissible Hermite--Einstein metric $H$. Now $H$ 
induces a smooth invariant Hermite--Einstein (with respect to the induced K\"ahler metric $\widetilde{\omega}$) 
$\widetilde{H}$ on $V$ over $Y$.

We wish to find a smooth invariant function $\widetilde{\phi}$ on $V$ such that $\widetilde{G}=\widetilde{H}e^{-\phi}$ satisfies
$$c_1(\widetilde{G})\,>\,0\, ,$$ and
\begin{gather}
(c_{1}^2-c_{2})(\widetilde{G}) \,=\, \widetilde{\eta}\, .\label{monge-ampere} 
\end{gather}
If we manage to do so, then the calculations in \cite{Pinchern} show that $c_{2}(\widetilde{G})>0$. This would complete the proof of Theorem \ref{parabolic-stable-surfaces}. Indeed, just as in \cite{Pinchern}, equation reduces to the following Monge--Amp\`ere equation.
\begin{gather}
\frac{r(r+1)}{2} \left (\ddc \widetilde{\phi} + \frac{c_1 (\widetilde{H})}{r} \right )^2 = \eta + \frac{2r c_2(\widetilde{H})-(r-1)c_1^2(\widetilde{H})}{2r} \label{explicit-monge-ampere}
\end{gather}
The right hand side of \eqref{explicit-monge-ampere} is positive owing to the Kobayashi--L\"ubke inequality (see \cite{Pinchern} for details). Moreover $[c_1 (V)]$ is a K\"ahler class admitting an invariant K\"ahler metric (indeed take the push--forward of $c_1(\mathcal{O}_{\mathbb{P}(V)}(1))^{r}$ over $\mathbb{P}(V)$). Thus \eqref{explicit-monge-ampere} admits an invariant smooth solution $\widetilde{\phi}$. It admits a smooth solution thanks to Yau's proof of the Calabi conjecture \cite{Yau}. The uniqueness of the solution makes it invariant because the K\"ahler class and the right hand side are invariant under the action of $\Gamma$. This induces a continuous function $\phi$ on $X$ smooth outside $D$ such that the induced metric $G$ on $E_*$ is admissible and satisfies the desired properties.
\end{proof}
\section{Curvature of direct images}\label{Directimages}

In this section, we prove an analogue of Berndtsson's theorem \cite{Bo} for equivariant (i.e.,
parabolic) bundles. In particular, this implies that if $E_{*}$ is Hartshorne ample, then 
$E_{*}\otimes \det(E)_{*}$ is Nakano positive.

\begin{theorem}
Suppose $X$ is a compact complex manifold equipped with a holomorphic action of a finite
group $\Gamma$. Assume that the action of $\Gamma$ lifts to an action on a holomorphic line
bundle $L$ over $X$ and that $h$ is a $\Gamma$--invariant Hermitian metric on $L$ with semipositive
curvature. Suppose $X$ admits a holomorphic submersion $\pi :X \longrightarrow \widetilde{X}$
to a compact complex manifold $\widetilde{X}$ such that $\widetilde{X}$ admits an action of
$\Gamma$ that commutes with the action on $X$.

There exists a holomorphic vector bundle $E$ over $\widetilde{X}$ with a lift of the action of $\Gamma$ such that for each $n$--dimensional fiber $X_t \,=\, \pi^{-1}(t)$, where $t\,\in \,\widetilde{X}$, the fiber $E_t$ of $E$ over $t$ is the vector space of holomorphic sections of $L\vert_{X_t} \otimes K_{X_t}$ on $X_t$. Moreover, the following metric $H$ on $E$ is invariant under $\Gamma$ and has Nakano semipositive curvature:
\begin{gather}
\vert u_t \vert ^2 \,=\,
\displaystyle \int _{X_t} \sqrt{-1}^{n^2} u_t \wedge \overline{u}_t h
\,=\, \int _{X_t} \sqrt{-1}^{n'} u_t \wedge \overline{u}_t h
\, ,
\label{metriconE}
\end{gather}
where $n'$ is $0$ or $1$ depending on whether $n$ is even or odd. Moreover, if $h$ has strictly positive curvature in a neighbourhood of $\pi^{-1}(p)$ where $p\in \widetilde{X}$, then $H$ is Nakano (strictly) positively curved near $p$.
\label{Berndtssontheorem}
\end{theorem}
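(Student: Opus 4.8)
The plan is to deduce this from Berndtsson's theorem \cite{Bo} by observing that the entire construction is functorial in $\Gamma$, so that the equivariant structure comes essentially for free once the non--equivariant statement is in place. First I would identify $E$ with the direct image $\pi_*(K_{X/\widetilde X}\otimes L)$, where $K_{X/\widetilde X}\,=\,K_X\otimes \pi^* K_{\widetilde X}^{-1}$ is the relative canonical bundle; adjunction for the submersion $\pi$ gives a canonical isomorphism $K_{X/\widetilde X}\vert_{X_t}\,\cong\, K_{X_t}$, so that the fiber of $\pi_*(K_{X/\widetilde X}\otimes L)$ over $t$ is exactly $H^0(X_t,\, L\vert_{X_t}\otimes K_{X_t})\,=\,E_t$. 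Since $X$ is compact and $\pi$ is a submersion onto the compact $\widetilde X$, the fibers $X_t$ are compact, and Berndtsson's theorem then furnishes the holomorphic vector bundle structure on $E$ together with the statement that the $L^2$--metric $H$ of \eqref{metriconE} is Nakano semipositive. The final clause about strict positivity near $p$ is likewise part of Berndtsson's conclusion, since his curvature formula is local over $\widetilde X$ and yields strict Nakano positivity at $p$ whenever the curvature of $h$ is strictly positive along $\pi^{-1}(p)$.

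The only genuinely new point is the $\Gamma$--equivariant structure, and the key step is to lift the action to $K_{X/\widetilde X}\otimes L$ and then push it forward. Since $\pi$ is $\Gamma$--equivariant, the action on $\widetilde X$ pulls back to an action on $\pi^* K_{\widetilde X}$, while the action on $X$ acts on $K_X$ through top--degree forms; together these give a $\Gamma$--action on $K_{X/\widetilde X}$ covering the action on $X$. Tensoring with the given lift of the action to $L$ yields a $\Gamma$--action on $K_{X/\widetilde X}\otimes L$, and applying the (equivariant) direct image $\pi_*$ produces a lift of the $\Gamma$--action to $E$. Concretely, for $\gamma\in\Gamma$ the biholomorphism $\gamma\colon X_t\to X_{\gamma t}$, together with the lift of $\gamma$ to $L$ and its natural action on relative canonical forms, induces a linear isomorphism $\rho_\gamma\colon E_t\to E_{\gamma t}$; because all of this data is holomorphic, these isomorphisms vary holomorphically in $t$ and assemble into a holomorphic bundle automorphism covering the action of $\gamma$ on $\widetilde X$.

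It remains to check that $H$ is invariant, and this is a change--of--variables computation. Normalizing $\rho_\gamma$ so that $\gamma^*(\rho_\gamma u)\,=\,u$ for $u\in E_t$, and using the hypothesis $\gamma^* h\,=\,h$, the pullback of the fiber integral defining $\|\rho_\gamma u\|^2$ over $X_{\gamma t}$ along the orientation--preserving biholomorphism $\gamma\colon X_t\to X_{\gamma t}$ gives
\begin{gather}
\int_{X_{\gamma t}}\sqrt{-1}^{\,n^2}\,(\rho_\gamma u)\wedge\overline{\rho_\gamma u}\;h
\,=\,\int_{X_t}\sqrt{-1}^{\,n^2}\,u\wedge\overline{u}\;h\, ,
\end{gather}
since the density $\sqrt{-1}^{\,n^2}\,u\wedge\overline{u}\,h$ (obtained after contracting the $L\otimes\overline{L}$--factor by $h$) transforms by honest pullback under biholomorphisms and the numerical constant is unaffected. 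Hence $\|\rho_\gamma u\|^2_{E_{\gamma t}}\,=\,\|u\|^2_{E_t}$, so $H$ is $\Gamma$--invariant.

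The substance of the argument is concentrated in Berndtsson's theorem itself, which I would cite rather than reprove; the main thing requiring care on our side is the bookkeeping in the preceding two steps, namely checking that the fiberwise isomorphisms $\rho_\gamma$ genuinely glue to a holomorphic automorphism of $E$ (which follows from holomorphy of the $\Gamma$--action on $K_{X/\widetilde X}\otimes L$ and the compatibility of $\pi_*$ with the equivariant structure) and that the relative canonical bundle carries exactly the equivariant structure making the $L^2$ pairing transform correctly. Neither presents a real obstacle once this functoriality is set up, so the equivariant refinement follows formally from the classical result.
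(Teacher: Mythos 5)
Your proposal is correct and takes essentially the same route as the paper's own proof: both cite Berndtsson's Theorem 1.2 in \cite{Bo} for the existence of $E$, the Nakano semipositivity, and the strict positivity statement, and then verify only the equivariant refinement, namely that the $\Gamma$--action lifts to $E$ (via pullback on forms and sections of $L\vert_{X_t}\otimes K_{X_t}$) and that invariance of $h$ forces invariance of the $L^2$--metric $H$. Your explicit handling of $K_{X/\widetilde X}$ and the change--of--variables computation simply spells out details the paper's proof leaves implicit.
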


\begin{proof}
The fact that $E$ exists as a vector bundle and that $H$ has Nakano semipositive curvature follows 
from theorem 1.2 in \cite{Bo}. The proof in \cite{Bo} also shows the strict positivity statement. All we need to do is to check that the action of $\Gamma$ lifts 
to $E$ and that $H$ is invariant under the same.

Firstly, the action of $\Gamma$ extends to $(p,q)$--forms by means of pullback, i.e., $g. \omega 
\,=\, (f_{g^{-1}})^{*}\omega$. By equivariance, $L_t \otimes K_{X_t}$ is taken to $L_{g.t} \otimes 
K_{X_{g.t}}$. Thus $\Gamma$ acts on sections of $L_t \otimes K_{X_t}$ over $X_t$ and takes to 
them to sections of $L_{g.t} \otimes K_{X_{g.t}}$ over $X_{g.t}$. Consequently, $E$ admits a lift of the 
action of $\Gamma$. By invariance of $h$ it is easy to see that the metric \ref{metriconE} is an 
invariant metric.
\end{proof}

Theorem \ref{Berndtssontheorem} implies the following result. This result 
provides further evidence for the parabolic version of Griffiths' conjecture.

\begin{corollary}
A parabolic Hartshorne ample bundle $E_*$ arising from a minimal good Kawamata cover $(Y,q,V)$ over a projective manifold $X$ admits an admissible metric $H$ that induces an admissible parabolic Nakano positively curved metric on $E_* \otimes \det(E_*)$. 
\label{nakanopositivity}
\end{corollary}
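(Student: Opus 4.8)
The plan is to reduce the statement to the equivariant Berndtsson theorem proved above (Theorem \ref{Berndtssontheorem}), applied to the projectivization of the equivariant bundle $V$. Since $(Y,q,V)$ is a good Kawamata cover and $E_*$ is parabolic Hartshorne ample, the equivariant bundle $V$ on $Y$ is Hartshorne ample (this is the equivalence between parabolic ampleness of $E_*$ and ampleness of $V$ for good covers, cf. Remark \ref{easy-implication} and \cite{Bi}, \cite{BN}, \cite{BL}); write $\Gamma = \operatorname{Gal}(q)$ and $r = \operatorname{rank}(V)$. Because the correspondence between parabolic and equivariant bundles is compatible with tensor products and determinants, the equivariant bundle attached to $E_* \otimes \det(E_*)$ is exactly $V \otimes \det V$. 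Hence, by Lemma \ref{lem-am}, it is enough to construct a $\Gamma$--invariant Nakano positive Hermitian metric on $V \otimes \det V$: such a metric descends to an admissible metric on $E_* \otimes \det(E_*)$, and $\Gamma$--invariant Nakano positivity upstairs is precisely parabolic Nakano positivity downstairs.

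To produce that metric I would set up the fibration $\pi \colon \mathbb{P}(V) \to Y$, which is $\Gamma$--equivariant for the induced action of $\Gamma$ on $\mathbb{P}(V)$ and has $\mathbb{P}(V)$ compact with fibers $\mathbb{P}^{r-1}$. Ampleness of $V$ makes $\mathcal{O}_{\mathbb{P}(V)}(1)$ ample, so it carries a smooth metric of strictly positive curvature; averaging this metric over $\Gamma$ yields a $\Gamma$--invariant metric whose curvature, being the average of strictly positive forms, remains strictly positive on all of $\mathbb{P}(V)$. Taking $L = \mathcal{O}_{\mathbb{P}(V)}(r+1)$ with the corresponding invariant metric $h$ of strictly positive curvature, the data $(\mathbb{P}(V), \pi, L, h)$ satisfies the hypotheses of Theorem \ref{Berndtssontheorem} (with $\widetilde{X} = Y$).

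Applying Theorem \ref{Berndtssontheorem} then gives a $\Gamma$--equivariant vector bundle $E' = \pi_*(L \otimes K_{\mathbb{P}(V)/Y})$ on $Y$ whose fiber over $t$ is $H^0(\pi^{-1}(t), L|_{\pi^{-1}(t)} \otimes K_{\pi^{-1}(t)})$, equipped with a $\Gamma$--invariant $L^2$ metric $H$ that is Nakano semipositive, and in fact strictly Nakano positive near every fiber because $h$ is strictly positively curved everywhere. It remains to identify $E'$ with $V \otimes \det V$ equivariantly. Using the relative canonical bundle formula $K_{\mathbb{P}(V)/Y} = \mathcal{O}_{\mathbb{P}(V)}(-r) \otimes \pi^*\det V$ one has $L \otimes K_{\mathbb{P}(V)/Y} = \mathcal{O}_{\mathbb{P}(V)}(1) \otimes \pi^*\det V$, so $L \otimes K_{\mathbb{P}(V)/Y}$ restricts to $\mathcal{O}_{\mathbb{P}^{r-1}}(1)$ on each fiber and $\pi_*(\mathcal{O}_{\mathbb{P}(V)}(1) \otimes \pi^*\det V) = V \otimes \det V$; the $\Gamma$--action carried along by this isomorphism is the natural one on $V \otimes \det V$. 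This furnishes the required invariant Nakano positive metric on $V \otimes \det V$, which descends to the desired admissible parabolic Nakano positive metric on $E_* \otimes \det(E_*)$.

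I expect the main obstacle to be bookkeeping rather than anything deep: one must check that the isomorphism $E' \cong V \otimes \det V$ is genuinely $\Gamma$--equivariant (so that the invariant $L^2$ metric transports to an invariant metric on $V \otimes \det V$), and that the averaging step preserves strict positivity so that the strict-positivity clause of Theorem \ref{Berndtssontheorem} applies at every fiber and yields honest Nakano positivity rather than mere semipositivity. Once these are in place, Lemma \ref{lem-am} together with the compatibility of the parabolic--equivariant dictionary with $\otimes$ and $\det$ closes the argument.
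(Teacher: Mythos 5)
Your route is the paper's own: goodness of the cover makes $V$ Hartshorne ample, one averages a positively curved metric on $\mathcal{O}_{\mathbb{P}(V)}(1)$ over $\Gamma$, feeds the equivariant fibration $\mathbb{P}(V)\to Y$ into Theorem \ref{Berndtssontheorem}, and descends the resulting invariant Nakano positive metric on $V\otimes\det V$. Your explicit pushforward identification $\pi_*\bigl(\mathcal{O}_{\mathbb{P}(V)}(r+1)\otimes K_{\mathbb{P}(V)/Y}\bigr)\cong V\otimes\det V$ is precisely the mechanism that the paper leaves implicit in the sentence ``Thus, $V$ admits an invariant Hermitian metric $\widetilde{H}$ such that the metric induced on $V\otimes\det(V)$ is Nakano positive,'' so spelling it out (with the equivariance check) is fine.

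There is, however, a genuine gap in your descent step, and it sits exactly where the hypothesis you never use lives. You justify ``such a metric descends to an admissible metric'' by Lemma \ref{lem-am}; but the converse direction of that lemma yields admissibility only under the additional hypothesis that the quotient $V_{{\rm U}(r)}/\Gamma$ of the unitary reduction is a $C^\infty$ submanifold of $E_{{\rm GL}(r,{\mathbb C})}$, and you do not verify this for the $L^2$ metric produced by Theorem \ref{Berndtssontheorem}. The paper instead deduces admissibility from Lemma \ref{well-definedness-of-admissibility}, and this is where \emph{minimality} of the good Kawamata cover enters: minimality forces the ramification degree of any other locally good cover to be an integer multiple of $N$, which is what makes the induced metric extend smoothly (with the correct fractional-power transformation) across the branch locus on every such cover. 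The same issue touches your assertion that ``$\Gamma$--invariant Nakano positivity upstairs is precisely parabolic Nakano positivity downstairs'': by the paper's definition, parabolic Nakano positivity is a condition on the induced metrics on \emph{all} locally good Kawamata covers around all points, not just on the given $(Y,q,V)$, and it is again Lemma \ref{well-definedness-of-admissibility} (together with the curvature transformation law used in Lemma \ref{parabolic-Griffiths-positivity}) that transfers smoothness and positivity to those covers. Replacing your appeal to Lemma \ref{lem-am} by an appeal to Lemma \ref{well-definedness-of-admissibility} --- that is, actually invoking minimality --- closes the argument.
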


\begin{proof}
By parabolic Hartshorne ampleness, $\mathcal{O}_{\mathbb{P}(V)}(1)$ admits a positively curved invariant metric. Thus, $V$ admits an invariant Hermitian metric $\widetilde H$ such that the Hermitian metric on $V\otimes \det (V)$ induced by $\widetilde H$ is Nakano positive. The induced metric on $E_{*}\otimes \det(E_*)$ is admissible because $Y$ is a minimal good Kawamata cover (lemma \ref{well-definedness-of-admissibility}). It is of course Nakano positively curved in the parabolic sense.
\end{proof}

\section*{Acknowledgements}
The authors thank the anonymous referee for useful suggestions to improve the content and the presentation of the paper significantly.
The work of the second author (Pingali) was partially supported by SERB grant No. 
ECR/2016/001356. The second author also thanks the Infosys foundation for the Infosys young 
investigator award. The second author is also partially supported by grant F.510/25/CAS-II/2018(SAP-I) from UGC (Govt. of India). The first author is supported by a J. C. Bose Fellowship.


\begin{thebibliography}{ZZZZZ}

\bibitem[BBNR]{BBNR} V. Balaji, I. Biswas and D. S. Nagaraj, Ramified $G$-bundles as parabolic bundles,
{\it Jour. Ramanujan Math. Soc.} {\bf 18} (2003), 123--138. 

\bibitem[BT]{BT} E. Bedford and B.A. Taylor, A new capacity for plurisubharmonic
functions, \emph{Acta Math.} {\bf 149} (1982), 1--41.

\bibitem[Be1]{Bo} B. Berndtsson, Curvature of vector bundles associated to holomorphic fibrations, 
\emph{Ann. of Math.} {\bf 169} (2009), 531--560.

\bibitem[Be2]{Bernd} B. Berndtsson, Lelong Numbers and Vector Bundles. \emph{Jour. Geom. 
Anal.} (2017), 1--16.

\bibitem[BP]{BernPaun} B. Berndtsson and M. Paun, Bergman kernels and the 
pseudoeffectivity of relative canonical bundles, \emph{Duke Math. Jour.} \textbf{145} 
(2008), 341--378.

\bibitem[Biq1]{Biq1} O. Biquard, Fibr\'es paraboliques stables et connexions
singuli\`eres plates, {\it Bull. Soc. Math. France} {\bf 119} (1991), 231--257.

\bibitem[Biq2]{Biq} O. Biquard, Fibr\'es de Higgs et connexions intégrables: le cas 
logarithmique (diviseur lisse), {\it Ann. Sci. École Norm. Sup.} {\bf 30}
(1997), 41--96.

\bibitem[Bis1]{Bi1} I. Biswas, Parabolic bundles as orbifold
bundles, \textit{Duke Math. Jour.} \textbf{88} (1997), 305--325.

\bibitem[Bis2]{Bi} I. Biswas, Parabolic ample bundles, {\it Math. Ann.}
{\bf 307} (1997), 511--529.

\bibitem[Bis3]{Bi3} I. Biswas, Chern classes for parabolic bundles, \emph{J. Math. Kyoto. Univ.}
{\bf 37} (1998), 597--613.

\bibitem[BiDe]{BDey} I. Biswas and A. Dey, Polystable parabolic principal G-bundles and Hermitian-Einstein 
connections, {\it Canad. Math. Bull.} {\bf 56} (2013), 44--54.

\bibitem[BiDh]{BD} I. Biswas and A. Dhillon, A construction of Chern classes of parabolic vector bundles,
{\it Comm. Algebra} {\bf 42} (2014), 1111--1122.

\bibitem[BL]{BL} I. Biswas and F. Laytimi, Parabolic $k$--ample bundles, {\it Inter. 
Jour. Math.} {\bf 22} (2011), 1647--1660.

\bibitem[BN]{BN} I. Biswas and D. S. Nagaraj, Parabolic ample bundles, II: Connectivity 
of zero locus of a class of sections, {\it Topology} {\bf 37} (1998), 781--789.

\bibitem[BG]{BG} S. Bloch and D. Gieseker. The topology of the Chern classes of an 
ample vector bundle, \emph{Invent. Math.} {\bf 12} (1971), 112--117.

\bibitem[Bo1]{Bo1} N. Borne, Fibr\'es paraboliques et champ des racines,
\textit{Int. Math. Res. Not.} (2007), no. 16, Art. ID rnm049.

\bibitem[Bo2]{Bo2} N. Borne, Sur les repr\'esentations du groupe
fondamental d'une vari\'et\'e prive\'e d'un diviseur \`a croisements
normaux simples, \textit{Indiana Univ. Math. Jour.} \textbf{58} (2009),
137--180.

\bibitem[CF]{Camp} F. Campana and H. Flenner, A characterization of ample vector 
bundles on a curve, \emph{Math. Ann.} \textbf{287} (1990), 571--575.

\bibitem[DeC]{Decataldo} M. A. A. De Cataldo, Singular Hermitian metrics on vector 
bundles, \emph{Jour. Reine Ang. Math.} \textbf{502} (1998), 93--122.

\bibitem[Dem]{Dem} J-P. Demailly and H. Skoda, `Relations entre les notions de 
positivit\'es de P. A. Griffiths et de S. Nakano pour les fibr\'es vectoriels, 
\emph{S\'eminaire Pierre Lelong-Henri Skoda (Analyse)}, (1978/79), 304--309, 
\emph{Lecture Notes in Math.} 822, Springer-Verlag, New York, 1980.

\bibitem[Di]{Diverio} S. Diverio, Segre forms and Kobayashi-L\"ubke inequality, 
\emph{Math. Zeit.} {\bf 283} (2016), 1-15.

\bibitem[GS]{GS} H. Gillet and C. Soul\'e, Direct images of Hermitian holomorphic bundles,
\emph{Bull. Amer. Math. Soc} {\bf 15} (1986), 209--212.

\bibitem[Gu1]{Guler} D. Guler, On Segre forms of positive vector bundles, \emph{Can. 
Math. Bull.} {\bf 55} (2012), 108--113.

\bibitem[Gu2]{Gulerthesis} D. Guler, {\it Chern forms of positive vector bundles},
Electronic Thesis or Dissertation. Ohio State University, 2006. https://etd.ohiolink.edu/.

\bibitem[Ha]{Hartshorne} R. Hartshorne, {\it Ample subvarieties of algebraic varieties},
\emph{Lecture notes in Math.} 156, Springer-Verlag (1970).

\bibitem[Ho]{Hosono} G. Hosono, Approximations and examples of singular Hermitian 
metrics on vector bundles, \emph{Arkiv Mat.} {\bf 55} (2017), 131--153.

\bibitem[HL]{HL} D. Huybrechts and M. Lehn, {\it The geometry
of moduli spaces of sheaves}, Aspects of Mathematics, E31,
Friedr. Vieweg \& Sohn, Braunschweig, 1997.

\bibitem[IS]{IS} J. N. N. Iyer and C. T. Simpson, A relation between the parabolic Chern
characters of the de Rham bundles, {\it Math. Ann.} {\bf 338} (2007), 347--383.

\bibitem[LRRS]{RaufiChern} R. L\"ark\"ang, H. Raufi, J. Ruppenthal, and M. Sera, Chern 
forms of singular metrics on vector bundles, arXiv:1608.05542.

\bibitem[Li]{JayLu} J. Li, Hermitian-Einstein metrics and Chern number inequalities on parabolic stable 
bundles over K\"ahler manifolds, \emph{Comm. Anal. Geom.} {\bf 8} (2000), 445--475.

\bibitem[Ka]{Ka} Y. Kawamata, Characterization of the abelian varieties, {\it Compositio Math.} {\bf 43} (1981),
253--276.

\bibitem[KMM]{KMM} Y. Kawamata, K. Matsuda, and K. Matsuki, Introduction to the minimal model problem, in
{\it Algebraic Geometry}, Sendai, 1985, 283--360, Adv. Stud. Pure Math. 10, North-Holland, Amsterdam, 1987.

\bibitem[MY]{MY} M. Maruyama and K. Yokogawa, Moduli of
parabolic stable sheaves, \emph{Math. Ann.} {\bf 293} (1992), 77--99.

\bibitem[MS]{MS} V. B. Mehta and C. S. Seshadri, Moduli of vector bundles on curves
with parabolic structures, \textit{Math. Ann.} \textbf{248} (1980), 205--239.

\bibitem[Mo]{Mori} S. Mori, Projective manifolds with ample tangent bundles, 
\emph{Ann. of Math.} {\bf 110} (1979), 593--606.

\bibitem[MT]{MT} C. Mourougane and S.Takayama, Hodge metrics and positivity of direct images,
\emph{Jour. Reine. Ang. Math.} {\bf 606} (2007), 167--178.

\bibitem[Na]{Naumann} P. Naumann, An approach to Griffiths conjecture, arXiv : 1710.10034.

\bibitem[PT]{Pauntakayama} M. Paun and S. Takayama, Positivity of twisted relative 
pluricanonical bundles and their direct images, \emph{Jour. Alg. Geom.} {\bf 27} (2018), 211--272.

\bibitem[Pi]{Pinchern} V. Pingali, Representability of Chern-Weil forms, \emph{Math. Zeit.}
{\bf 288} (2018), 629--641.

\bibitem[Po]{Po} J. A. Poritz, Parabolic vector bundles and Hermitian-Yang-Mills 
connections over a Riemann surface, {\it Internat. Jour. Math.} {\bf 4} (1993), 467--501.

\bibitem[Ra1]{Raufi} H. Raufi, Singular hermitian metrics on holomorphic vector 
bundles, \emph{Arkiv Mat.} {\bf 53} (2015), 359--382.

\bibitem[Ra2]{Raufinakano} H. Raufi, The Nakano vanishing theorem and a vanishing 
theorem of Demailly-Nadel type for holomorphic vector bundles, arXiv:1212.4417.

\bibitem[Si]{Si} C. T. Simpson, Harmonic bundles on noncompact curves, {\it Jour. Amer. Math. 
Soc.} {\bf 3} (1990), 713--770.

\bibitem[Um]{Umemura} H. Umemura, Some results in the theory of vector bundles,
\emph{Nagoya Math. Jour.} {\bf 52} (1973), 97--128.

\bibitem[Ya]{Yau} S.-T. Yau, On the Ricci curvature of a compact K\"ahler manifold and the complex
Monge-Amp\`ere equation, I, \emph{Comm. Pure Appl. Math.} {\bf 31} (1978), 339--411.
\end{thebibliography}
\end{document}